\theoremstyle{plain}
\newtheorem{thm}{Theorem}[section]
\newtheorem{prop}[thm]{Proposition}
\newtheorem{cor}[thm]{Corollary}
\newtheorem{f}[thm]{Fact}
\theoremstyle{definition}
\newtheorem{exam}[thm]{Example}
\newtheorem{defi}[thm]{Definition}
\theoremstyle{remark}
\newtheorem{rem}[thm]{Remark}
\newtheorem{p}[thm]{Problem}
\newcommand{\mb}{\mathbb}
\newcommand{\mc}{\mathcal}
\begin{document}

\title{Quasi-Metrizability of \\
Bornological Biuniverses in ZF}
\author{Artur Pi\k{e}kosz \\
Institute of Mathematics\\
Cracow University of Technology\\
Warszawska 24, \\
31-155 Cracow, Poland\\
E-mail: pupiekos@cyfronet.pl\\
and\\
Eliza Wajch\\
Institute of Mathematics and Physics\\
Siedlce University of Natural Sciences and Humanities\\
3 Maja 54,\\
08-110 Siedlce, Poland\\
E-mail: eliza.wajch@wp.pl }

\maketitle

\begin{abstract}
 Hu's metrization theorem for bornological universes is shown to hold in \textbf{ZF} and it is adapted to a quasi-metrization theorem for bornologies in bitopological spaces. The problem of uniform quasi-metrization of quasi-metric bornological universes is investigated.  Several consequences for natural bornologies in generalized topological spaces in the sense of Delfs and
Knebusch are deduced. Some statements concerning (uniform)-(quasi)-metrization of bornologies are shown to be relatively independent of \textbf{ZF}.  
\end{abstract}
\renewcommand{\thefootnote}{}

\footnote{2010 \emph{Mathematics Subject Classification}: Primary 54E55, 54A35; Secondary 54A05, 54E35.}

\footnote{\emph{Key words and phrases}: bornology, bitopological space, quasi-metric, generalized topological space, ZF.}

\renewcommand{\thefootnote}{\arabic{footnote}} \setcounter{footnote}{0}

\section{Introduction}

A \textbf{bitopological space} is a triple $(X, \tau_1, \tau_2)$ where $X$ is a set and $\tau_1, \tau_2$ are topologies in $X$. A \textbf{quasi-pseudometric} in a set $X$ is a function $d: X\times X\to [0; +\infty)$ such that, for all $x,y,z\in X$, $d(x,y)\leq d(x,z)+d(z,y)$ and $d(x,x)=0$. A quasi-pseudometric $d$ in $X$ is called a \textbf{quasi-metric} if, for all $x,y\in X$, the condition $d(x,y)=0$ implies $x=y$ (cf. \cite{Kel}, \cite{FL}). 

Let $d$ be a quasi-pseudometric in $X$. \textbf{The conjugate} of $d$ is the quasi-pseudometric $d^{-1}$ defined by $d^{-1}(x, y)=d(y, x)$ for $x, y\in X$. The $d$-ball with centre $x\in X$ and radius $r\in(0; +\infty)$ is the set $B_{d}(x, r)=\{ y\in X: d(x, y)<r\}$. The collection $\tau(d)=\{ V\subseteq X: \forall_{x\in V}\exists_{n\in\omega} B_{d}(x, \frac{1}{2^n})\subseteq V\}$ is \textbf{ the topology in $X$ induced by $d$}. The triple $(X,\tau(d), \tau(d^{-1}))$ is \textbf{ the bitopological space associated with $d$}. 
\begin{defi}
A bitopological space $(X, \tau_1, \tau_2)$ is \textbf{(quasi)-metrizable} if there exists a (quasi)-metric $d$ in $X$ such that $\tau_1=\tau(d)$ and $\tau_2=\tau(d^{-1})$ (cf. pp. 74--75 of \cite{Kel}).
\end{defi}
One can find a considerable number of quasi-metrization theorems in \cite{An} and in other sources (cf. \cite{FL}). 

We recall that, according to \cite{Al}--\cite{Hu}, a \textbf{boundedness} in a set $X$ is a (non-void) ideal of subsets of $X$. 
A boundedness $\mc{B}$ in $X$ is called a \textbf{bornology} in $X$ if every singleton of $X$ is a member of $\mc{B}$ (cf. 1.1.1 in \cite{H-N}).   

\begin{defi}[cf. Definition 4.1 of \cite{Hu}] If $\mc{B}$ is a boundedness in $X$, then a collection $\mc{A}$ is called \textbf{a base} for $\mc{B}$ if $\mc{A}\subseteq \mc{B}$ and every set of $\mc{B}$ is a subset of a member of $\mc{A}$. \textbf{A second-countable boundedness} is a boundedness which has a countable base.
\end{defi} 

\begin{defi} Let $(X, \tau_{1}, \tau_{2})$ be a bitopological space. A boundedness $\mc{B}$ in $X$ will be called $(\tau_{1}, \tau_{2})$\textbf{-proper} if, for each $A\in\mc{B}$, there exists $B\in\mc{B}$ such that $\text{cl}_{\tau_2}A\subseteq \text{int}_{\tau_1}(B)$. If $\tau=\tau_1=\tau_2$ and the boundedness $\mc{B}$ is $(\tau,\tau)$-proper, we will say that $\mc{B}$ is $\tau$\textbf{-proper}.
\end{defi}

Let us notice that if $(X, \tau)$ is a topological space, then a boundedness $\mc{B}$ in $X$ is $\tau$-proper if and only if the universe $((X, \tau),\mc{B})$ is proper in the  sense of Definition 3.4 of \cite{Hu}.

\begin{defi}
\begin{enumerate}
\item[(i)] We say that \textbf{a bornological biuniverse} is an ordered pair $((X, \tau_1, \tau_2), \mc{B})$  where $(X, \tau_1, \tau_2)$ is a bitopological space and $\mc{B}$ is a bornology in $X$ . 
\item[(ii)]A \textbf{bornological universe} is an ordered pair $((X, \tau), \mc{B})$ where $(X, \tau)$ is a topological space and $\mc{B}$ is a bornology in $X$ (cf. Definition 1.2 of \cite{Hu}).
\end{enumerate}
\end{defi}

\begin{defi} Let $d$ be a quasi-metric in $X$ and let $A$ be a subset of $X$. Then:
\begin{enumerate}
\item[(i)]  $A$ is  called \textbf{$d$-bounded} if there exist $x\in X$ 
and $r\in (0; +\infty)$ such that $A\subseteq B_d(x, r)$; 
\item[(ii)] if $A$ is not $d$-bounded, we say that $A$ is \textbf{$d$-unbounded};
\item[(iii)] $\mc{B}_d(X)$ is the collection of all d-bounded subsets of $X$.
\end{enumerate}
\end{defi}

For a quasi-metric $d$ in $X$, a set $A\subseteq X$ can be simultaneously $d$-bounded and $d^{-1}$-unbounded.

\begin{exam} For $x,y\in\omega$, let $d(x, y)=0$ if $x=y$ and $d(x, y)= 2^{x}$ if $x\neq y$. Then $\omega= B_d(0, 2)$, so $\omega$ is $d$-bounded. However, for arbitrary $x\in\omega$ and $r\in (0; +\infty)$, if $y\in\omega$ is such that $2^{y}>r$, then $y\notin B_{d^{-1}}(x, r)$. Therefore, $\omega$ is $d^{-1}$-unbounded. 
\end{exam}

\begin{defi} We say that a bornological biuniverse $((X, \tau_1, \tau_2), \mc{B})$ is \textbf{(quasi)-metrizable} if there exists a (quasi)-metric $d$ in $X$ such that $\tau_1=\tau(d), \tau_2=\tau(d^{-1})$ and $\mc{B}=\mc{B}_d(X)$.
\end{defi}

It is obvious that if $\tau$ is a topology in $X$, then a bornological biuniverse $((X, \tau, \tau), \mc{B})$ is metrizable if and only if the bornological universe $((X, \tau),\mc{B})$ is metrizable in the sense of Definition 10.1 of \cite{Hu}. Let us recall this definition.

\begin{defi} Let $((X, \tau),\mc{B})$ be a bornological universe. We say that:
\begin{enumerate}
\item[(i)] $((X, \tau),\mc{B})$ is  \textbf{metrizable (in the sense of Hu)} if there exists a metric $d$ on $X$ such that $\tau=\tau(d)$ and $\mc{B}=\{ A\subseteq X: \text{diam}_d(A)<+\infty\}$ where $\text{diam}_d(A)=\sup\{d(x, y): x, y\in A\}$;
\item[(ii)]  $((X, \tau),\mc{B})$ is \textbf{quasi-metrizable} if there exists a quasi-metric $d$ on $X$ such that  $\tau=\tau(d)$ and, moreover, $\mc{B}$ is the collection of all $d$-bounded sets.
\end{enumerate}
\end{defi}

We show in Section 4 that if a bornological biuniverse $((X, \tau, \tau), \mc{B})$ is quasi-metrizable, then the bornological universe $((X,\tau), \mc{B})$ is metrizable.  

Of course, it is impossible to prove anything in mathematics without axioms. The basic set-theoretic system of axioms used in this paper is \textbf{ZF} (cf. \cite{Ku1}-\cite{Ku2}). If a relatively independent of \textbf{ZF} axiom \textbf{A} is added to \textbf{ZF}, we shall write $\mathbf{ZF+A}$ and clearly denote our theorems proved in $\mathbf{ZF+A}$ but not in $\mathbf{ZF}$. As far as set-theoretic axioms are concerned, we use standard notation from \cite{Ku2} and \cite{Her}. In particular, we denote $\mathbf{ZF+AC}$ by $\mathbf{ZFC}$. If it is necessary, we use a modification of \textbf{ZF} signalled in \cite{PW}. 

According to Theorem 1 of \cite{Vr2} and Theorem 13.2 of \cite{Hu}, the following theorem can be called \textbf{Hu's metrization theorem for bornological universes}:

\begin{thm} 
It holds true in $\mathbf{ZFC}$ that a bornological universe $((X, \tau), \mc{B})$ is metrizable if and only if  it is proper, while, simultaneously, $(X, \tau)$ is metrizable and $\mc{B}$ has a countable base. 
\end{thm}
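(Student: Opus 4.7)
Suppose $d$ is a metric on $X$ with $\tau=\tau(d)$ and $\mc{B}=\{A\subseteq X:\mathrm{diam}_d(A)<+\infty\}$. Then $(X,\tau)$ is visibly metrizable, and for any fixed $x_0\in X$ the family $\{B_d(x_0,n):n\in\omega\}$ is a countable base for $\mc{B}$, because any $A\in\mc{B}$ is contained in $B_d(x_0,d(x_0,a)+\mathrm{diam}_d(A)+1)$ for any $a\in A$. Properness is immediate: for $A\in\mc{B}$ any ball $B_d(x_0,r)$ of sufficiently large radius $r$ is $\tau$-open, lies in $\mc{B}$, and contains $\mathrm{cl}_\tau A$.

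\textbf{Sufficiency.} My plan is to manufacture a metric $d$ from the given data in three steps. Given a metric $\rho$ inducing $\tau$, a countable base $(B_n)_{n\in\omega}$ of $\mc{B}$, and $\tau$-properness, I first build a swollen countable base $(C_n)_{n\in\omega}$ of $\mc{B}$ with the ``cushion'' property $\mathrm{cl}_\tau C_n\subseteq\mathrm{int}_\tau C_{n+1}$ for every $n$. This is a routine recursion: set $C_0=B_0$, and having $C_n\in\mc{B}$ apply properness to obtain $D_n\in\mc{B}$ with $\mathrm{cl}_\tau C_n\subseteq\mathrm{int}_\tau D_n$, then set $C_{n+1}=D_n\cup B_{n+1}\in\mc{B}$. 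Since singletons belong to $\mc{B}$ we get $\bigcup_n C_n=X$, and the inclusions $B_n\subseteq C_n$ make $(C_n)_n$ also a base of $\mc{B}$.

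Next I manufacture an ``escape function'' $f\colon X\to[0,+\infty)$. For each $n$ the Urysohn-type formula $f_n(x)=\rho(x,\mathrm{cl}_\tau C_n)/\bigl(\rho(x,\mathrm{cl}_\tau C_n)+\rho(x,X\setminus\mathrm{int}_\tau C_{n+1})\bigr)$, well-defined thanks to the cushion, yields a $\tau$-continuous $f_n\colon X\to[0,1]$ that vanishes on $\mathrm{cl}_\tau C_n$ and equals $1$ outside $\mathrm{int}_\tau C_{n+1}$. Set $f=\sum_{n\in\omega}f_n$. The cushion then guarantees (a) if $x\in C_k$ then $f_n(x)=0$ for every $n\geq k$, so $f(x)\leq k$; moreover the $\tau$-open neighborhood $\mathrm{int}_\tau C_{k+1}$ of $x$ kills every summand with index $\geq k+1$, so $f$ is locally a finite sum of continuous functions and hence $\tau$-continuous; and (b) if $x\notin C_k$ then monotonicity forces $x\notin\mathrm{int}_\tau C_{j+1}$ for every $j<k$, so $f_j(x)=1$ and $f(x)\geq k$.

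Finally I set $d(x,y)=\rho(x,y)/(1+\rho(x,y))+|f(x)-f(y)|$. This is a metric (the first summand already separates points), and since $\rho/(1+\rho)$ is topologically equivalent to $\rho$ while $f$ is $\tau$-continuous, a standard two-sided inclusion check gives $\tau(d)=\tau$. For the bornology, any $A\subseteq C_k$ satisfies $\mathrm{diam}_d(A)\leq 1+k$, hence $A\in\mc{B}_d(X)$; conversely a $d$-bounded $A$ makes $f$ bounded on $A$ by some $M$, and then (b) forces $A\subseteq C_k$ for any integer $k>M$, whence $A\in\mc{B}$. The main obstacle is the escape-function step: engineering $f$ to be both $\tau$-continuous (needing local finiteness of the sum) and uniformly divergent off each $C_k$ (needing a uniform jump of $1$ per cushion layer) depends entirely on exploiting $\mathrm{cl}_\tau C_n\subseteq\mathrm{int}_\tau C_{n+1}$, which is exactly what properness supplies.
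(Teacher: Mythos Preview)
Your proof is correct and follows essentially the same three-step strategy as the paper's Theorem~4.7 (cushioned countable base from properness, a characteristic function built from Urysohn-type functions between successive layers, then a bounded compatible metric plus $|f(x)-f(y)|$); the only cosmetic difference is that the paper defines its characteristic function piecewise on the shells as $\psi=h_n+n$ rather than as your locally finite sum $\sum_n f_n$. The paper also arranges the base-construction recursion to avoid any appeal to choice (so that the result actually holds in $\mathbf{ZF}$, cf.\ Propositions~3.4--3.5), whereas your ``apply properness to obtain $D_n$'' uses dependent choice---perfectly acceptable for the $\mathbf{ZFC}$ statement as given.
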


One of the main aims of our present work is to show that the proof to Hu's metrization theorem in \cite{Hu} highly involves the axiom \textbf{CC} of countable choice and to prove in \textbf{ZF} the following generalization of Theorem 1.9:

\begin{thm} It is true in $\mathbf{ZF}$ that a bornological biuniverse $((X, \tau_1, \tau_2), \mc{B})$ is quasi-metrizable if and only if $\mc{B}$ has a countable base and it is $(\tau_1, \tau_2)$- proper, while the bitopological space $(X, \tau_1, \tau_2)$ is quasi-metrizable.
\end{thm}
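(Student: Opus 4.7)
For the necessity, suppose $d$ witnesses the quasi-metrizability of $((X,\tau_1,\tau_2),\mc{B})$. Then $d$ also quasi-metrizes the bitopological space $(X,\tau_1,\tau_2)$. Provided $X\neq\varnothing$, fix any $x_0\in X$; the triangle inequality yields $B_d(x,r)\subseteq B_d(x_0,n)$ for every integer $n>d(x_0,x)+r$, so $\{B_d(x_0,n):n\in\omega\}$ is a countable base of $\mc{B}=\mc{B}_d(X)$. For $(\tau_1,\tau_2)$-properness, given $A\subseteq B_d(x,r)$, a direct computation yields $\cl_{\tau(d^{-1})}B_d(x,r)\subseteq\{y:d(x,y)\leq r\}\subseteq B_d(x,r+1)$, and $B_d(x,r+1)$ is both $\tau(d)$-open and $d$-bounded, so $B=B_d(x,r+1)$ witnesses properness for $A$. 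No appeal to choice is needed.

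For the sufficiency, let $\rho$ be a quasi-metric with $\tau(\rho)=\tau_1$ and $\tau(\rho^{-1})=\tau_2$, and let $\{C_n\}_{n\in\omega}$ be a countable base of $\mc{B}$; replacing $C_n$ by $C_0\cup\dots\cup C_n$, we may assume the base is increasing. The first main step is to extract in $\mathbf{ZF}$ an increasing sequence $(A_i)_{i\in\omega}$ in $\mc{B}$, cofinal in $\mc{B}$, with $\cl_{\tau_2}A_i\subseteq\text{int}_{\tau_1}A_{i+1}$ for all $i$. This I would do by the explicit recursion $n_0=0$ and $n_{i+1}$ the \emph{least} integer $n>n_i$ with $\cl_{\tau_2}C_{n_i}\subseteq\text{int}_{\tau_1}C_n$; such $n$ exists because $(\tau_1,\tau_2)$-properness supplies some $B\in\mc{B}$ with $\cl_{\tau_2}C_{n_i}\subseteq\text{int}_{\tau_1}B$, and $B\subseteq C_n$ for some $n$. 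Setting $A_i=C_{n_i}$ does the job. The least-index prescription is what replaces a tacit appeal to $\mathbf{CC}$ at the corresponding step of Hu's proof in \cite{Hu}.

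The second main step is to build a function $\nu:X\to[0,+\infty)$ adapted to $\rho$ and to $(A_i)$: $\nu$ should be $\tau_1$-upper semicontinuous, $\tau_2$-lower semicontinuous, bounded on every $A_i$, and with sublevel sets $\{\nu\leq M\}$ cofinal with $(A_i)$ in $\mc{B}$. For building blocks I would take the $\rho^{-1}$-distance functions $\phi_n(x)=\inf\{\rho(y,x):y\in\cl_{\tau_2}A_n\}$; the two one-sided estimates $\phi_n(y)\leq\phi_n(x)+\rho(x,y)$ and $\phi_n(y)\geq\phi_n(x)-\rho(y,x)$ show each $\phi_n$ is $\tau_1$-upper and $\tau_2$-lower semicontinuous, with $\phi_n(x)=0$ precisely on $\cl_{\tau_2}A_n$. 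Assembling the $\phi_n$ into a weighted locally finite series $\nu(x)=\sum_{n\in\omega}c_n\min(1,\phi_n(x))$ produces the desired $\nu$; local finiteness is automatic because $\{x\}\in\mc{B}$ forces $x\in\cl_{\tau_2}A_N$ for some $N$, so $\phi_n(x)=0$ for $n\geq N$. Then setting
\[
d(x,y)=\min(\rho(x,y),1)+\max(0,\nu(y)-\nu(x)),
\]
a routine check confirms $d$ is a quasi-metric; the containment $B_d(x,\epsilon)\subseteq B_\rho(x,\epsilon)$ together with the $\tau_1$-upper semicontinuity of $\nu$ yields $\tau(d)=\tau_1$, the dual argument yields $\tau(d^{-1})=\tau_2$, and a set $A$ is $d$-bounded iff $\nu$ is bounded on $A$, iff $A\subseteq A_i$ for some $i$, iff $A\in\mc{B}$.

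The principal obstacle lies in the calibration of $\nu$: the weights $c_n$ must be chosen so that $\nu$ is actually unbounded outside every $A_i$, even when the individual $\phi_n(x)$ happen to be small for many $n<N$ at some $x\notin A_N$, while the local-finiteness estimate is kept uniform enough to propagate the bilateral semicontinuity through the infinite sum. This asymmetric bookkeeping, which has no analogue in the symmetric case treated by Hu, and its execution without any further appeal to $\mathbf{CC}$ beyond the ladder's construction in Step~1, is where the essential new technical work of the theorem is concentrated.
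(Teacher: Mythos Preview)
Your outline matches the paper's architecture: necessity via the ball base, the ladder $(A_i)$ built by least-index recursion (this is essentially Propositions~3.4--3.5), a characteristic function, and the final quasi-metric $\min(\rho,1)+\max\{0,\nu(y)-\nu(x)\}$. But your construction of $\nu$ as a weighted sum $\sum_n c_n\min(1,\phi_n)$ is a genuine gap, not merely a deferred detail; the ``calibration'' you flag as the principal obstacle is in fact impossible along this route, and the difficulty has nothing to do with asymmetry.

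No choice of positive weights works in general, already in the symmetric case. Take $X=(0,\infty)^2$ with the Euclidean metric $\rho$, the proper second-countable bornology $\mc{B}=\{A:\inf_{(s,t)\in A}st>0\}$, and the ladder $A_n=\{(s,t):st>\tfrac{1}{n+1}\}$. For $x=(s,t)$ with $st<\tfrac{1}{n+1}$ one has $\phi_n(x)\le\tfrac{1}{(n+1)s}$, witnessed by $(s,\tfrac{1}{(n+1)s})\in\text{cl}\,A_n$. Put $S_N=\sum_{n<N}\tfrac{c_n}{n+1}$. If $S_N\to\infty$, let $x_N=\bigl(S_N,\tfrac{1}{(N+1)S_N}\bigr)$; then $s_Nt_N=\tfrac{1}{N+1}$, so $\phi_n(x_N)=0$ precisely for $n\ge N$, and $\nu(x_N)\le\sum_{n<N}\tfrac{c_n}{(n+1)S_N}=1$, yet $\{x_N:N\ge1\}\notin\mc{B}$. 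If instead $S_N$ stays bounded, then already $\nu(1,t)\le\sup_N S_N<\infty$ for every $t>0$, while $\{(1,t):0<t<1\}\notin\mc{B}$. The underlying problem is that disjointness of $\text{cl}_{\tau_2}A_n$ from $X\setminus\text{int}_{\tau_1}A_{n+1}$ yields only $\phi_n>0$ on the latter set, never a uniform positive lower bound.

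The paper sidesteps this by introducing the companion distance $g_n(x)=d(x,X\setminus\text{int}_{\tau_1}A_{n+1})$ and forming the Urysohn-type quotient $h_n=\phi_n/(\phi_n+g_n)$. Since $g_n$ vanishes outside $\text{int}_{\tau_1}A_{n+1}$, one gets the hard equality $h_n=1$ there, so the piecewise function $\psi(x)=n+h_n(x)$ on the shell $\text{int}_{\tau_1}A_{n+1}\setminus\text{int}_{\tau_1}A_n$ satisfies $n\le\psi\le n+1$ and is automatically a $(\tau_1,\tau_2)$-characteristic function for $\mc{B}$. The genuine technical work is then the bicontinuity of $\psi$ across shell boundaries (the case analysis in the proof of Theorem~4.7), not any calibration of weights.
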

  
We deduce Theorem 1.9 from 1.10 and we prove a stronger theorem than 1.10 in Section 4 (Theorem 4.7). We also give some other applications of Theorem 1.10. Especially in Sections 2, 3 and 7, we give examples of unprovable in \textbf{ZF} results on bornological universes that were obtained by other authors probably either in \textbf{ZFC} or in naive preaxiomatic set theory. Section 5 contains a generalization of Theorem 13.5 of \cite{Hu}. We pay a special attention to \cite{GM} and, in Section 6, we modify the basic theorem of \cite{GM} to get necessary and sufficient conditions for a bornological quasi-metric universe to be uniformly quasi-metrizable (Theorem 6.5); furthermore, in Section 8, we modify a theorem about compact bornologies from \cite{GM}. Finally, in Section 10, we offer relevant to bornologies concepts of quasi-metrizability for generalized topological spaces in the sense of Delfs and Knebusch (cf. \cite{DK}, \cite{Pie1}, \cite{Pie2}, \cite{PW}) and give a number of illuminating examples. Section 9 concerns bornologies in generalized topological spaces and it is a preparation  for Section 10. We close the paper with Section 11 where there are remarks about new topological categories.  

We recommend \cite{En} as a monograph on topology that we use. Our basic knowledge about category theory is taken from  \cite{AHS}. Models of set theory applied by us are described in \cite{Her}, \cite{Jech1}-\cite{Jech2} and \cite{HR}.     

\section{Countability}

The axiom of countable choice is usually denoted by \textbf{CC}, \textbf{ACC} or \textbf{CAC}. 

We shall use the following standard notions of finiteness and infinity:
\begin{defi} A set $X$ is called:
\begin{enumerate}
\item[(i)] 
\textbf{finite} or \textbf{T-finite} (truly finite) if there exists $n\in\omega$ such that $X$ is equipollent with $n$;
\item[(ii)] \textbf{D-finite} or \textbf{Dedekind-finite} if no proper subset of $X$ is equipollent with $X$.
\item[(iii)] \textbf{infinite} or \textbf{T-infinite} if it is not finite, and \textbf{ D-infinite} if it is not D-finite.
\end{enumerate}
\end{defi}
 A set is T-finite if and only if it is finite in Tarski's sense (cf. Definition 4.4 of \cite{Her}). Other notions relevant to finiteness were studied, for example, in \cite{Cruz}. The term \emph{truly finite} was suggested by K. Kunen in a private communication with E. Wajch.

Let us establish three distinct notions of countability.
\begin{defi} A set $X$ is called:
\begin{enumerate} 
\item[(i)] \textbf{countable} or \textbf{T-countable} (truly countable) if $X$ is equipollent with a subset of $\omega$;
\item[(ii)] \textbf{D-countable} if every D-infinite subset of $X$ is equipollent with $X$;
\item[(iii)] \textbf{W-countable} if every well-orderable subset of $X$  is D-countable.
\end{enumerate}
\end{defi}
To each notion of countability Q corresponds a notion of uncountability.
\begin{defi}
We say that a set is \textbf{Q-uncountable} if it is not Q-countable where Q stands for T, D or W.
Sets that are T-uncountable are called \textbf{uncountable}.
\end{defi} 

Let us denote by  $\mathbf{CC}(\text{D-fin})$ the following statement: every non-void countable collection of pairwise disjoint non-void D-finite sets has a choice function. As usual, $\mathbf{CC}(\text{fin})$ is the statement: every non-void countable collection of pairwise disjoint non-void finite sets has a choice function.

\begin{prop} The following conditions are equivalent:
\begin{enumerate}
\item[(i)] $\mathbf{CC}(\text{D-fin})$;
\item[(ii)] every D-countable set is countable.
\end{enumerate}
\end{prop}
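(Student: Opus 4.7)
My plan is first to reduce (ii) to a cleaner equivalent form. I would verify that a set $X$ is D-countable precisely when $X$ is D-finite or $X$ is countably infinite: if $X$ is D-finite, no subset of $X$ is D-infinite and the condition holds vacuously; if $X$ is D-infinite and D-countable, then $X$ contains a countably infinite subset $Y$, which is D-infinite and thus equipollent with $X$, so $X\sim\omega$. Consequently, (ii) is equivalent to the statement that every D-finite set is finite, and I would argue the proposition in this reformulation.

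For $(i)\Rightarrow(ii)$, assume $\mathbf{CC}(\text{D-fin})$ and let $X$ be D-finite; suppose for contradiction that $X$ is infinite. First I would establish in pure \textbf{ZF} the lemma that $X^n$ is D-finite for every $n\in\omega$, by induction on $n$. The inductive step reduces to showing that the product of two D-finite sets $Y,Z$ is D-finite: if $f:\omega\hookrightarrow Y\times Z$ with $f(n)=(y_n,z_n)$, then the images $\{y_n\}$ and $\{z_n\}$ are countable (as images of $\omega$ under functions) subsets of the D-finite sets $Y,Z$, hence finite, contradicting the injectivity of $f$. Then each $\mathrm{Inj}(n,X)\subseteq X^n$ is D-finite and non-empty, so applying $\mathbf{CC}(\text{D-fin})$ to the pairwise disjoint countable family $(\{n\}\times\mathrm{Inj}(n,X))_{n\in\omega}$ produces injections $\sigma_n:n\to X$. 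A routine recursion then builds an injection $\omega\to X$: having chosen distinct $x_0,\ldots,x_{k-1}\in X$, take $x_k=\sigma_{k+1}(j)$ for the least $j\le k$ with $\sigma_{k+1}(j)\notin\{x_0,\ldots,x_{k-1}\}$, which exists since $\sigma_{k+1}$ takes $k+1$ distinct values. This contradicts the D-finiteness of $X$.

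For $(ii)\Rightarrow(i)$, let $(A_n)_{n\in\omega}$ be a pairwise disjoint countable family of non-empty D-finite sets; by the reformulation each $A_n$ is finite. I would form the tree $T=\bigcup_k T_k$ of finite partial choice functions, where $T_k=\prod_{i<k}A_i$ is itself finite and non-empty. Since $T$ is infinite, (ii) forces $T$ to be D-infinite, yielding an injection $g:\omega\to T$. Using $g$, I extract an infinite branch of $T$ by a K\"onig-style recursion: set $v_0$ to the root, and having defined $v_k\in T_k$ such that the $g$-preimage of the subtree rooted at $v_k$ is infinite, let $v_{k+1}$ be the child $c$ of $v_k$ for which the $g$-preimage of the subtree at $c$ is infinite and its minimum is least. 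Such a $c$ exists by pigeonhole (the finitely many child-subtrees partition the $g$-preimage of the subtree at $v_k$ minus at most one point), and it is unique since the child-subtrees are disjoint. The resulting branch $(v_k)_{k\in\omega}$ gives the desired choice function for $(A_n)_{n\in\omega}$.

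The main obstacle I anticipate is the branch extraction in $(ii)\Rightarrow(i)$, since the usual proof of K\"onig's lemma for finitely branching trees requires $\mathbf{CC}(\text{fin})$, which is essentially what we are trying to derive. The key observation that resolves this is that the enumeration $g$ itself canonically breaks ties among the unordered children of each node, so no additional choice is needed. A secondary point requiring care is the \textbf{ZF}-lemma that products of D-finite sets remain D-finite, handled by the image-is-countable-hence-finite trick noted above.
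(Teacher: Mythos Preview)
Your proof is correct. Both you and the paper begin with the same reformulation: (ii) is equivalent to the statement that every D-finite set is finite. From that point on, the paper simply invokes a known result (exercise E13 of Section~4.1 in Herrlich's \emph{Axiom of Choice}) asserting that this latter statement is equivalent to $\mathbf{CC}(\text{D-fin})$, and is done in three lines. You instead give a self-contained proof of that equivalence: for $(i)\Rightarrow(ii)$ you use $\mathbf{CC}(\text{D-fin})$ to choose injections $\sigma_n:n\to X$ (after checking in \textbf{ZF} that $\mathrm{Inj}(n,X)$ is D-finite), then splice them into an injection $\omega\to X$; for $(ii)\Rightarrow(i)$ you build the tree of finite partial choice functions, use the hypothesis to get an injection $g:\omega\to T$, and run a K\"onig-type recursion in which $g$ itself supplies the canonical tiebreaker among children, avoiding any further choice. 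The paper's approach is shorter but relies on an external reference; your approach is longer but fully elementary and exposes exactly where D-finiteness of products and the $g$-induced selection are needed.
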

\begin{proof} Let $X$ be a set. Assume that $X$ is D-countable. If $X$ is D-infinite, then $X$ is countable (cf. \cite{W}, p. 48). Assume that $X$ D-finite. Then if $(i)$ holds, it follows from E13 of Section 4.1 of \cite{Her} that the set $X$ is finite, so countable. Hence $(i)$ implies $(ii)$. Now, assume that $(ii)$ holds and that $X$ is D-finite. Then $X$ is D-countable, so countable. This implies that $X$ is equipollent with a finite subset of $\omega$ and, in consequence, $X$ is finite. By E13 of Section 4.1 of \cite{Her}, $(ii)$  implies $(i)$. 
\end{proof}

\begin{f} For every D-finite set $X$,  the following conditions are equivalent:
\begin{enumerate}
\item[(i)] $X$ is finite;
\item[(ii)] $X\cup\omega$ is D-countable.
\end{enumerate}
\end{f}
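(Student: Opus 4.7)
The plan is to prove the two implications separately in \textbf{ZF}, relying only on two elementary ZF-provable facts: every infinite subset of $\omega$ is equipollent with $\omega$ (via the least-element enumeration), and every infinite well-orderable set is D-infinite (split off an initial segment of order type $\omega$ and shift it by one).

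For (i) $\Rightarrow$ (ii), I would first observe that if $X$ is finite then $X\cup\omega$ is equipollent with $\omega$: enumerate the finitely many elements of $X\setminus\omega$ first and then append a shifted copy of $\omega$. Given any D-infinite subset $A\subseteq X\cup\omega$, transport $A$ across this bijection to obtain an infinite subset of $\omega$; by the first fact above it is equipollent with $\omega$, and hence with $X\cup\omega$. Thus every D-infinite subset of $X\cup\omega$ is equipollent with $X\cup\omega$, which is precisely D-countability.

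For (ii) $\Rightarrow$ (i), I would note that $\omega$ is a D-infinite subset of $X\cup\omega$, so the D-countability hypothesis immediately produces a bijection $f\colon X\cup\omega\to\omega$. Then $f(X)\subseteq\omega$ witnesses that $X$ is countable and, in particular, well-orderable. Since $X$ is D-finite by hypothesis, the second standard fact above forces $X$ to be finite, because an infinite well-orderable set would have to be D-infinite.

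No step of this outline poses a genuine obstacle; the argument is almost purely bookkeeping once the two auxiliary ZF facts are in hand, and in particular it uses no form of choice. The one place where vigilance is required is the second implication: the D-finiteness hypothesis on $X$ cannot be dropped, since $X=\omega$ would otherwise be an obvious counterexample to the conclusion.
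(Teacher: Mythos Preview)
Your argument is correct in \textbf{ZF}. Both implications are handled cleanly: for (i)$\Rightarrow$(ii) you correctly reduce to the fact that every infinite subset of $\omega$ is equipollent with $\omega$, and for (ii)$\Rightarrow$(i) you correctly use that $\omega$ itself is a D-infinite subset of $X\cup\omega$, forcing $X$ to embed in $\omega$ and then invoking the ZF fact that an infinite well-orderable set is D-infinite.

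There is nothing to compare against: in the paper this statement is recorded as a \emph{Fact} with no proof supplied. Your write-up thus fills in exactly what the paper leaves to the reader, and does so without any appeal to choice. The only minor remark is that in the first implication you silently use that D-infinite implies T-infinite; this is trivial (finite sets are D-finite by the pigeonhole principle), but since the surrounding section is explicitly about distinguishing these notions, it would not hurt to state it.
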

\begin{cor} 
If $X$ is an infinite D-finite set, then the set $X\cup\omega$ is D-uncountable.
\end{cor}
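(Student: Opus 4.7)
The plan is to derive the corollary as the immediate contrapositive of the implication (i)$\Leftrightarrow$(ii) in Fact 2.5. Given an infinite D-finite set $X$, Fact 2.5 is applicable because $X$ is D-finite; since $X$ is not finite, condition (i) fails, so by the stated equivalence condition (ii) must fail as well, i.e., $X\cup\omega$ is not D-countable. By Definition 2.3, this means $X\cup\omega$ is D-uncountable, which is what is claimed.

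Since the content is really carried by Fact 2.5, I would recall the driving idea behind the (ii)$\Rightarrow$(i) direction that the corollary relies on. The subset $\omega\subseteq X\cup\omega$ is always D-infinite, witnessed by the shift $n\mapsto n+1$, which injects $\omega$ onto its proper subset $\omega\setminus\{0\}$. If $X\cup\omega$ were D-countable then $\omega$, being a D-infinite subset, would be equipollent with $X\cup\omega$, so $X\cup\omega$ would be countable in the sense of Definition 2.2(i). Consequently $X$, as a subset of a countable set, would be countable too. But a D-finite countable set is forced to be finite: if $X$ were equipollent with an infinite $S\subseteq\omega$, then $S$ would be equipollent with $\omega$ and hence D-infinite, which transfers along the equipollence to $X$, contradicting D-finiteness of $X$. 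Thus $X$ would be finite, establishing (ii)$\Rightarrow$(i) of Fact 2.5.

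Taking the contrapositive yields the corollary, and no form of countable choice is needed anywhere in the argument, keeping the proof inside plain \textbf{ZF} in line with the paper's setting. The only subtlety worth flagging is the invariance of D-finiteness under equipollence; once that is acknowledged, the reasoning above is routine and the corollary follows by a single application of Fact 2.5.
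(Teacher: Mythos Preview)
Your proof is correct and matches the paper's approach exactly: the paper states Corollary 2.6 immediately after Fact 2.5 with no separate proof, so it is understood as the direct contrapositive of that fact, precisely as you argue. Your additional sketch of the (ii)$\Rightarrow$(i) direction of Fact 2.5 is sound and a welcome elaboration, but not required for the corollary itself.
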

\begin{f} A set $X$ is countable if and only if $X\cup\omega$ is D-countable.
\end{f}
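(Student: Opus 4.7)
My plan is to leverage the fact that $\omega$ itself sits inside $X\cup\omega$ as a canonical D-infinite subset, together with the \textbf{ZF}-provable observation that every infinite subset of $\omega$ is equipollent with $\omega$.

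For the backward direction, I would assume $X\cup\omega$ is D-countable and note that $\omega$ is D-infinite, as witnessed canonically by the successor map $n\mapsto n+1$, which is a bijection from $\omega$ onto its proper subset $\omega\setminus\{0\}$. Applying the defining property of D-countability to the subset $\omega\subseteq X\cup\omega$, I conclude that $\omega$ is equipollent with $X\cup\omega$. Hence $X\cup\omega$ is countable, and so is its subset $X$.

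For the forward direction, I would start from a countable $X$ with an injection $f\colon X\to\omega$ and construct an explicit injection $g\colon X\cup\omega\to\omega$ by sending $n\in\omega$ to $2n$ and $x\in X\setminus\omega$ to $2f(x)+1$; this is well defined and injective by parity. Thus $X\cup\omega$ is countable, and since it contains $\omega$ it is infinite, so equipollent with $\omega$. Then, for any D-infinite subset $Z\subseteq X\cup\omega$, transporting $Z$ along a fixed bijection $h\colon X\cup\omega\to\omega$ produces a D-infinite, hence infinite, subset of $\omega$; by the canonical enumeration of an infinite subset of $\omega$ in increasing order this subset is equipollent with $\omega$, so $Z$ is equipollent with $\omega$, and therefore with $X\cup\omega$. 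This confirms D-countability.

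The main point to monitor is that no form of countable choice sneaks in: because $\omega$ is canonically well-ordered, every injection into $\omega$ and every enumeration of an infinite subset of $\omega$ is definable without choice, so the whole argument goes through in \textbf{ZF}. I do not anticipate a genuine obstacle here; the Fact is essentially a bookkeeping consequence of the definitions, which likely explains why it is stated without proof in the text.
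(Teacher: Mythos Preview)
Your argument is correct and carried out entirely in \textbf{ZF}: the backward direction uses that $\omega$ is a D-infinite subset of $X\cup\omega$, and the forward direction reduces everything to the canonical well-order on $\omega$, so no choice is invoked. The paper records this statement as a Fact without proof, so there is no authorial argument to compare against; your write-up would serve as a complete justification.
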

\begin{cor} In every model $\mathbf{M}$ for $\mathbf{ZF}$ such that there is in $\mathbf{M}$ an infinite D-finite subset of $\mathbb{R}$, the collection of all D-countable subsets of $\mathbb{R}$ is not a bornology.
\end{cor}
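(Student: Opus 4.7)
The plan is to exhibit two sets in the purported bornology whose union lies outside, thereby showing the collection fails to be closed under finite unions (and so is not an ideal, let alone a bornology). Let $X\subseteq\mathbb{R}$ be the given infinite D-finite set, and identify $\omega$ with its standard copy in $\mathbb{R}$. I would argue in three short steps.

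First, I would check that $X$ itself is D-countable. Since $X$ is D-finite and, in \textbf{ZF}, any subset of a D-finite set is D-finite (if $Y\subseteq X$ contained an injective image of $\omega$, so would $X$), $X$ has no D-infinite subsets at all; thus the defining implication of D-countability is vacuously satisfied.

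Second, I would observe that $\omega\subseteq\mathbb{R}$ is D-countable. This is either direct (every D-infinite subset of $\omega$ is an infinite, hence well-orderable, subset of $\omega$ and so equipollent with $\omega$) or an immediate consequence of Fact 2.7 applied to $X=\varnothing$ or $X=\omega$. Either way, $\omega\in\mc{B}$, where $\mc{B}$ denotes the collection of D-countable subsets of $\mathbb{R}$.

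Third, by Corollary 2.6 the set $X\cup\omega$ is D-uncountable, so $X\cup\omega\notin\mc{B}$. Combined with the first two steps, this shows that $\mc{B}$ contains $X$ and $\omega$ but not their union, hence is not closed under finite unions, so cannot be an ideal of subsets of $\mathbb{R}$ and therefore is not a bornology. I do not expect any genuine obstacle here; the only mildly non-trivial point is the appeal to the \textbf{ZF}-fact that subsets of D-finite sets are D-finite, which guarantees step one, and everything else is a direct citation of the facts and corollaries preceding the statement.
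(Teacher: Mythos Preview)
Your proposal is correct and follows exactly the route the paper intends: the corollary is stated without proof immediately after Fact 2.5, Corollary 2.6, and Fact 2.7, and your three steps (the D-finite set $X$ is vacuously D-countable, $\omega$ is D-countable, but $X\cup\omega$ is D-uncountable by Corollary 2.6) are precisely the argument those preceding results are set up to deliver. There is nothing to add.
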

\begin{f} In every set $X$,  the following collections are bornologies:
\begin{enumerate}
\item[(i)] the collection $\mathbf{FB}(X)$ of all finite subsets of $X$;
\item[(ii)] the collection of all D-finite subsets of $X$;
\item[(iii)] the collection of all countable subsets of $X$;
\item[(iv)] the collection of all W-countable subsets of $X$.
\end{enumerate}
\end{f}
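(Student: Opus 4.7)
My plan is to verify, for each of the four collections, the three defining properties of a bornology: every singleton is a member, the collection is hereditary (closed under taking subsets), and it is closed under pairwise unions. Containment of singletons is immediate in all four cases, since singletons are finite, hence D-finite, countable, and W-countable.

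Items (i)--(iii) should be dispatched quickly. For (i), the assertions for subsets and binary unions of finite sets are elementary. For (ii), I would invoke the two standard \textbf{ZF} facts that a subset of a D-finite set is D-finite and that the union of two D-finite sets is D-finite; both are derived from the \textbf{ZF} characterization of D-infinite as ``contains a copy of $\omega$'', together with the observation that any infinite subset of $\omega$ is equipollent to $\omega$ (see Section 4.1 of \cite{Her}). For (iii), hereditariness is immediate, and for the union, given injections $f\colon A\to\omega$ and $g\colon B\to\omega$, the map sending $x\in A$ to $2f(x)$ and $x\in B\setminus A$ to $2g(x)+1$ is an explicit injection $A\cup B\to\omega$, requiring no choice at all.

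The real content lies in item (iv), and this is where I expect the main obstacle. Hereditariness is essentially automatic: if $A'\subseteq A$ and $A$ is W-countable, then any well-orderable subset of $A'$ is also a well-orderable subset of $A$, hence D-countable. For the union, fix W-countable sets $A,B$ and a well-orderable $C\subseteq A\cup B$; then $C\cap A$ and $C\cap B$ are well-orderable subsets of $A$ and $B$ respectively, so each is D-countable. The key auxiliary claim needed is that in \textbf{ZF} a well-orderable D-countable set is at most countable: a well-orderable set is equipollent with an ordinal, any infinite ordinal is D-infinite (shift its initial $\omega$-segment and extend by the identity to obtain a proper self-injection), so a well-orderable D-finite set is finite, while a well-orderable D-infinite D-countable set contains $\omega$ and is forced by D-countability to be equipollent with $\omega$. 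Applying this to $C\cap A$ and $C\cap B$ and then step (iii) shows that $C$ itself is countable, hence D-countable (a countable set is trivially D-countable, since any D-infinite subset of $\omega$ is equipollent to $\omega$ and $C$ itself, if D-infinite, is equipollent to $\omega$). Therefore $A\cup B$ is W-countable, which completes the verification.
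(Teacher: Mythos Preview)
Your argument is correct. The paper states this result as a \emph{Fact} without proof, so there is no approach to compare against; you have supplied a full verification where the paper gave none. The only part requiring any real care is (iv), and your reduction---observing that a well-orderable D-countable set must be countable (because an infinite ordinal is D-infinite, and a D-infinite D-countable set is equipollent to any countably infinite subset of itself), then invoking (iii)---is sound and carried out cleanly in \textbf{ZF}.
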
 

Several remarks on D-countability can be found in \cite{W}.

\section{Second-countable bornological biuniverses}

One may deduce wrongly from Theorem 5.5 of \cite{Hu} that every base of a second-countable boundedness $\mc{B}$ certainly contains a countable base for $\mc{B}$.  However, we are going to prove that Theorem 5.5 of \cite{Hu} is relatively independent of \textbf{ZF}. To do this, let us consider the  following bornologies in $\mb{R}$:
$$\mathbf{UB}(\mb{R})=\{ A\subseteq\mb{R}: \exists_{r\in\mb{R}} A\subseteq (-\infty; r)\},$$
$$\mathbf{LB}(\mb{R})=\{ A\subseteq\mb{R}: \exists_{r\in\mb{R}} A\subseteq (r; +\infty)\}.$$

 Of course, $\mathbf{UB}(\mb{R})$ and $\mathbf{LB}(\mb{R})$ are second-countable. 

\begin{prop} Equivalent are:
\begin{enumerate}
\item[(i)] $\mathbf{CC}(\mb{R})$;
\item[(ii)]  for every unbounded to the right subset $D$ of $\mb{R}$, the collection $\mc{A}(D)=\{( -\infty; d): d\in D\}$ contains a countable base for $\mathbf{UB}(\mb{R})$;
\item[(iii)] for every unbounded to the left subset $D$ of $\mb{R}$, the collection $\mc{A}(D)=\{(d; +\infty): d\in D\}$ contains a countable base for $\mathbf{LB}(\mb{R})$;
\end{enumerate}
\end{prop}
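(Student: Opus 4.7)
The plan is to prove (i) $\Rightarrow$ (ii) by a direct invocation of $\mathbf{CC}(\mb{R})$, to prove (ii) $\Rightarrow$ (i) by a finite-product encoding that bypasses the inherent gaps of the naive encoding, and to obtain (ii) $\Leftrightarrow$ (iii) from the order-reversing involution $x \mapsto -x$ on $\mb{R}$, which carries unbounded-to-the-right sets to unbounded-to-the-left ones and exchanges $\mathbf{UB}(\mb{R})$ with $\mathbf{LB}(\mb{R})$ in a $\mathbf{ZF}$-definable manner. Hence the serious content is the equivalence (i) $\Leftrightarrow$ (ii).

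For (i) $\Rightarrow$ (ii), given an unbounded-to-the-right $D \subseteq \mb{R}$, I would apply $\mathbf{CC}(\mb{R})$ to the countable family $\{D \cap (n, +\infty) : n \in \omega\}$ of non-empty subsets of $\mb{R}$ to obtain a sequence $(d_n)_{n \in \omega}$ with $d_n \in D$ and $d_n > n$. Then $\{(-\infty; d_n) : n \in \omega\}$ is a countable subcollection of $\mc{A}(D)$ which is a base of $\mathbf{UB}(\mb{R})$, because every $A \subseteq (-\infty; r)$ is contained in $(-\infty; d_n)$ for any integer $n \geq r$.

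The main obstacle is (ii) $\Rightarrow$ (i). The naive attempt---fix an injection $f : \mb{R} \to (0; 1)$, set $B_n = n + f(A_n) \subseteq (n; n+1)$, and let $D = \bigcup_n B_n$---produces a cofinal $D$ whose countable cofinal subsets are guaranteed to meet only cofinally many $B_n$, not every one, so the decoding yields merely a partial choice function. To overcome this I would encode \emph{products} rather than the individual sets: each finite product $P_n := \prod_{k=0}^{n} A_k$ is non-empty in $\mathbf{ZF}$ alone, so one fixes a $\mathbf{ZF}$-definable injection $\sigma_n : \mb{R}^{n+1} \to (n; n+1)$ (for example by interleaving decimal expansions and then composing with $x \mapsto n + (1 + e^{-x})^{-1}$), sets $B_n = \sigma_n(P_n) \subseteq (n; n+1)$, and forms the unbounded-to-the-right set $D = \bigcup_n B_n$. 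By (ii), $\mc{A}(D)$ contains a countable base $\{(-\infty; d_k) : k \in \omega\}$, and these $d_k$ are cofinal in $\mb{R}$. I then define $g : \omega \to \bigcup_n A_n$ as follows: for each $n$, let $k(n) = \min\{k \in \omega : d_k > n\}$, set $m(n) = \lfloor d_{k(n)} \rfloor \geq n$ so that $d_{k(n)} \in B_{m(n)}$, decode via $\sigma_{m(n)}^{-1}$ to a tuple in $P_{m(n)}$, and take $g(n)$ to be the $n$-th coordinate of that tuple (which exists because $n \leq m(n)$). Then $g(n) \in A_n$ uniformly in $n$, which establishes $\mathbf{CC}(\mb{R})$ and completes the implication.
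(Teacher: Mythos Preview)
Your argument is correct. The treatment of (i)~$\Rightarrow$~(ii) and of (ii)~$\Leftrightarrow$~(iii) matches the paper's in spirit (the paper phrases the former via a citation to Theorem~3.8 of Herrlich's \emph{Axiom of Choice}, but the content is the same direct choice you wrote down).

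Where you genuinely diverge is (ii)~$\Rightarrow$~(i). The paper argues by contraposition: it invokes the known equivalence (again Theorem~3.8 of Herrlich) that $\neg\mathbf{CC}(\mb{R})$ is equivalent to the existence of an unbounded $B\subseteq\mb{R}$ containing no unbounded sequence; symmetrizing to $D=B\cup(-B)$ yields an unbounded-to-the-right set whose $\mc{A}(D)$ cannot contain a countable base. Your route is a direct construction: you encode the finite products $P_n=\prod_{k\le n}A_k$ into disjoint unit intervals, apply (ii) to the resulting cofinal $D$, and decode a single point of the base to recover simultaneous choices in all $A_k$ with $k\le m(n)$. The finite-product trick is exactly what repairs the naive encoding, and your decoding step is well-defined in $\mathbf{ZF}$ because the injections $\sigma_n:\mb{R}^{n+1}\to(n;n+1)$ can be specified uniformly (interleave canonical expansions after pushing each coordinate into $(0;1)$). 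The paper's approach is shorter but leans on an external equivalence; yours is self-contained and exhibits a reusable encoding device for deriving $\mathbf{CC}(\mb{R})$ from cofinality-type hypotheses.
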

\begin{proof} 
First, assume that $\mathbf{CC}(\mb{R})$ holds and that  $D$ is an unbounded to the right subset of $\mb{R}$. It follows from Theorem 3.8 of \cite{Her} that there exists an unbounded sequence $(d_n)_{n\in\omega}$ of elements of $D\cap[0; +\infty)$. Then $\{(-\infty; d_n): n\in\omega\}$ is a countable base for $\mathbf{UB}(\mb{R})$.  
 
Now, suppose that $\mathbf{CC}(\mathbb{R})$ does not hold. By Theorem 3.8 of \cite{Her}, there exists an unbounded subset $B$ of $\mathbb{R}$ which does not contain any unbounded sequence. Then the set $D=B\cup\{ -x: x\in B\}$ does not contain any unbounded sequence. The collection $\mc{A}(D)$ is a base for $\mathbf{UB}(\mb{R})$ such that $\mc{A}(D)$ does not contain any countable base for $\mathbf{UB}(\mb{R})$. Hence $(i)$ implies $(ii)$.

To show that $(ii)$ and $(iii)$ are equivalent, it suffices to make a suitable use of the mapping $f:\mb{R}\to\mb{R}$ defined by $f(x)=-x$ for $x\in\mb{R}$.  
\end{proof}

\begin{cor} Let $\mathbf{M}$ be any model for $\mathbf{ZF}$ such that $\mathbf{CC}(\mb{R})$ fails in $\mathbf{M}$. Then the bornology $\mathbf{UB}(\mb{R})$ has a base which does not contain any countable base for $\mathbf{UB}(\mb{R})$. In consequence, Theorem 5.5 of \cite{Hu} is false in $\mathbf{M}$. 
\end{cor}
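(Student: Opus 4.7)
The plan is to read this corollary as a direct contrapositive application of Proposition 3.1, combined with a routine check that the collection produced is in fact a base for $\mathbf{UB}(\mb{R})$. Since $\mathbf{CC}(\mb{R})$ fails in $\mathbf{M}$, the equivalence in Proposition 3.1 forces condition (ii) to fail inside $\mathbf{M}$, so there is, in $\mathbf{M}$, an unbounded to the right subset $D$ of $\mb{R}$ for which the collection $\mc{A}(D)=\{(-\infty;d):d\in D\}$ contains no countable base for $\mathbf{UB}(\mb{R})$. This $D$ supplies the witness I will use.

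Next I would verify that $\mc{A}(D)$ is genuinely a base for $\mathbf{UB}(\mb{R})$, not merely a subfamily. The inclusion $\mc{A}(D)\subseteq\mathbf{UB}(\mb{R})$ is immediate, and, conversely, given $A\in\mathbf{UB}(\mb{R})$, I pick $r\in\mb{R}$ with $A\subseteq(-\infty;r)$ and then, using the unboundedness of $D$ to the right, some $d\in D$ with $d>r$; this yields $A\subseteq(-\infty;d)\in\mc{A}(D)$. Combined with the previous paragraph, $\mc{A}(D)$ is a base of $\mathbf{UB}(\mb{R})$ that contains no countable base, establishing the first assertion of the corollary.

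For the second assertion, I would quote Theorem 5.5 of \cite{Hu}, which, as the preamble to Proposition 3.1 indicates, asserts that every base of a second-countable boundedness contains a countable base. Since $\mathbf{UB}(\mb{R})$ is second-countable already in $\mathbf{ZF}$, while $\mc{A}(D)$ is, in $\mathbf{M}$, a base of $\mathbf{UB}(\mb{R})$ containing no countable base, Theorem 5.5 of \cite{Hu} must fail in $\mathbf{M}$.

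There is essentially no obstacle here; the corollary is just the hard direction of Proposition 3.1 repackaged together with a trivial base check. The only point requiring any care is the verification that $\mc{A}(D)$ really is a base (rather than a mere subfamily of $\mathbf{UB}(\mb{R})$), which is precisely where the hypothesis that $D$ is unbounded to the right gets used.
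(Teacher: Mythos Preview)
Your proposal is correct and follows exactly the intended route: the paper states this as an immediate corollary of Proposition 3.1 (with no separate proof), and your argument simply unpacks that implication, including the routine check---already asserted in the proof of Proposition 3.1---that $\mc{A}(D)$ is a base for $\mathbf{UB}(\mb{R})$.
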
 

\begin{prop} ($\mathbf{ZF+CC}$) If a boundedness $\mc{B}$ in $X$ has a countable base, then every base for $\mc{B}$ contains a countable base for $\mc{B}$.
\end{prop}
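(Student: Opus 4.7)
The plan is to use the countable base directly to cut down an arbitrary base to a countable subfamily, with $\mathbf{CC}$ supplying one element from each of countably many non-empty selection sets. Let $\{B_n:n\in\omega\}$ be a countable base for $\mc{B}$; without loss of generality I may assume the indexing runs over all of $\omega$ (if the given countable base is finite, the argument below simplifies and requires no choice at all). Let $\mc{A}$ be any base for $\mc{B}$.

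For each $n\in\omega$, consider the set
\[
\mc{A}_n=\{A\in\mc{A}:B_n\subseteq A\}.
\]
Since $B_n\in\mc{B}$ and $\mc{A}$ is a base for $\mc{B}$, each $\mc{A}_n$ is non-empty. The family $(\mc{A}_n)_{n\in\omega}$ is then a countable family of non-empty subsets of $\mc{A}$, so $\mathbf{CC}$ yields a function $n\mapsto A_n$ with $A_n\in\mc{A}_n$ for every $n\in\omega$.

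It remains to check that $\mc{A}'=\{A_n:n\in\omega\}$ is a (countable) base for $\mc{B}$ contained in $\mc{A}$. Given an arbitrary $C\in\mc{B}$, the fact that $\{B_n:n\in\omega\}$ is a base for $\mc{B}$ delivers some $n\in\omega$ with $C\subseteq B_n$, and by construction $B_n\subseteq A_n$, hence $C\subseteq A_n\in\mc{A}'$. Thus $\mc{A}'\subseteq\mc{A}$ is a countable base for $\mc{B}$, as required.

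There is no real obstacle here: the only non-trivial ingredient is precisely the invocation of $\mathbf{CC}$ to select a single $A_n$ from each $\mc{A}_n$; this is unavoidable in view of Corollary 3.2, which shows that without enough choice a base for a second-countable bornology need not contain any countable base. The indexing by $\omega$ is exactly what reduces the needed choice principle from full $\mathbf{AC}$ to $\mathbf{CC}$.
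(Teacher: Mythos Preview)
Your proof is correct and follows essentially the same approach as the paper's: both pick, for each member $B$ of the given countable base, some $A\in\mc{A}$ with $B\subseteq A$ via $\mathbf{CC}$, and observe that the resulting countable subfamily of $\mc{A}$ is again a base. The only cosmetic difference is that you enumerate the countable base by $\omega$ while the paper indexes directly by the countable base itself.
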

\begin{proof} 
Let $\mc{A}$ be a base for $\mc{B}$. Consider an arbitrary countable base $\mc{C}$ for $\mc{B}$. Then $\mc{C}\neq\emptyset$. For $C\in \mc{C}$, let $\mc{A}(C)=\{A\in\mc{A}: C\subseteq A\}$. Since $\mc{A}$ is a base for $\mc{B}$, we have $\mc{A}(C)\neq\emptyset$ whenever $C\in\mc{C}$. Using \textbf{CC}, we deduce that there exists $x\in\prod_{C\in \mc{C}}\mc{A}(C)$. Then $\mc{A}_0=\{x(C): C\in \mc{C}\}\subseteq \mc{A}$ and $\mc{A}_0$ is a countable base for $\mc{B}$.
\end{proof}

We can get the following correct modification in $\mathbf{ZF}$ of Theorem 5.5 of \cite{Hu}:

\begin{prop} Let $\mc{C}$ be a countable base for a boundedness $\mc{B}$ in $X$ such that $\mc{B}$ does not have a maximal set with respect to inclusion. Then there exists a strictly increasing sequence $(A_n)$ of members of $\mc{C}$ such that the collection $\{ A_n: n\in\omega\}$ is a base for $\mc{B}$.
\end{prop}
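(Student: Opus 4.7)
The plan is to enumerate $\mc{C}$ and then extract the desired sequence by a canonical recursion driven by the no-maximum hypothesis, avoiding any appeal to countable choice. First I would observe that $\mc{C}$ must be T-infinite: otherwise $U=\bigcup\mc{C}$ would belong to $\mc{B}$ (an ideal is closed under finite unions), and the base property of $\mc{C}$ would force $B\subseteq U$ for every $B\in\mc{B}$, making $U$ a maximal element of $\mc{B}$ and contradicting the hypothesis. Being T-countable and T-infinite, $\mc{C}$ is equipollent with $\omega$, so I may fix a bijective enumeration $\mc{C}=\{C_n:n\in\omega\}$ without any appeal to a choice principle.

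Next I would construct $(A_n)$ by recursion. Set $A_0=C_0$. Having defined $A_n\in\mc{C}$, let $k_n$ be the least $k\in\omega$ such that $A_n\cup C_{n+1}\subseteq C_k$ and $A_n\subsetneq C_k$, and put $A_{n+1}=C_{k_n}$. The crucial step is to verify that such a $k$ always exists. If $C_{n+1}\not\subseteq A_n$, then $A_n\cup C_{n+1}\in\mc{B}$ strictly contains $A_n$, and the base property of $\mc{C}$ delivers some $C_k\supseteq A_n\cup C_{n+1}\supsetneq A_n$. If instead $C_{n+1}\subseteq A_n$, so that $A_n\cup C_{n+1}=A_n$, then $A_n$ is not maximal in $\mc{B}$, so some $B\in\mc{B}$ satisfies $B\not\subseteq A_n$; applying the base property to $A_n\cup B$ yields $C_k\supseteq A_n\cup B\supsetneq A_n$, which automatically contains $A_n\cup C_{n+1}$. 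In both cases the set of valid indices is nonempty, and its minimum exists by the well-ordering of $\omega$, so no choice is needed.

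To finish, I would check the three requirements on $(A_n)$. Membership in $\mc{C}$ and the strict inclusion $A_n\subsetneq A_{n+1}$ are built into the recursion. For the base property of $\{A_n:n\in\omega\}$, a trivial induction shows $C_n\subseteq A_n$ for every $n\in\omega$: indeed $C_0=A_0$, and for $n\geq 1$ the defining inclusion $A_{n-1}\cup C_n\subseteq C_{k_{n-1}}=A_n$ delivers $C_n\subseteq A_n$. Then for any $B\in\mc{B}$, since $\mc{C}$ is a base, there is some $m\in\omega$ with $B\subseteq C_m\subseteq A_m$, as required.

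The main obstacle, consistent with the theme of the paper, is avoiding any tacit use of $\mathbf{CC}$. The two design decisions that make this possible are fixing the enumeration of $\mc{C}$ at the outset (legitimate because $\mc{C}$ is T-countable and T-infinite) and producing $A_{n+1}$ as a uniquely specified least witness rather than as a choice among candidates; handling the degenerate case $C_{n+1}\subseteq A_n$ via the no-maximum hypothesis is what keeps the inductive step uniform.
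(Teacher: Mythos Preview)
Your proof is correct and follows essentially the same approach as the paper: fix an enumeration of $\mc{C}$, set $A_0=C_0$, and recursively define $A_{n+1}$ as $C_k$ for the \emph{least} index $k$ witnessing the desired strict inclusion, so that no choice is invoked. The only cosmetic differences are that the paper uniformly requires the strict inclusion $A_m\cup C_{m+1}\subset C_n$ (always invoking the no-maximum hypothesis to guarantee it), whereas you split into the two cases $C_{n+1}\subseteq A_n$ and $C_{n+1}\not\subseteq A_n$; and you spell out the infinitude of $\mc{C}$ and the verification that $\{A_n:n\in\omega\}$ is a base, which the paper leaves implicit.
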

\begin{proof} It follows from the countability of $\mc{C}$ that we can write $\mc{C}=\{ C_n: n\in\omega\}$. Let $A_0=C_0$. Since $\mc{B}$ does not contain maximal bounded sets, there exists $B\in\mc{B}$ such that $B$ is not a subset of $A_0\cup C_1$ and there exists $C\in\mc{C}$ such that $A_0\cup B\cup C_1\subseteq C$. This proves that there exists $C\in\mc{C}$ such that $A_0\cup C_1\neq C$ and $A_0\cup C_1\subseteq C$. Let $n_1=\min\{n\in \omega: A_0\cup C_1\subset C_n\}$ and $A_1= C_{n_1}$. Of course, we use the symbol $\subset$ for strict inclusion. Suppose that, for $m\in\omega\setminus\{0\}$, we have already defined the set $A_m\in\mc{C}$. In much the same way as above, we take $n_{m+1}=\min\{ n\in\omega: A_m\cup C_{m+1}\subset C_{n}\}$ and $A_{m+1}=C_{n_{m+1}}$. The sequence $(A_n)$ has the required properties.
\end{proof}

Although Theorem 5.5 of \cite{Hu} is unprovable in \textbf{ZF}, the following proposition about bornological biuniverses clearly shows that Theorem 5.6 of \cite{Hu} holds true in \textbf{ZF}; however, in the proof of Theorem 5.6 in \cite{Hu}, an illegal in \textbf{ZF} countable choice was involved. Therefore, we offer its  more careful proof in \textbf{ZF}.

\begin{prop} Let us suppose that $(X, \tau_1, \tau_2)$ is a bitopological space, while $\mc{B}$ is a second-countable $(\tau_1, \tau_2)$-proper boundedness in $X$ such that $\mc{B}$ does not have maximal sets with respect to inclusion. Then there exists a strictly increasing sequence $(A_n)$ of $\tau_1$-open sets such that $\mc{A}=\{ A_n: n\in\omega\}$ is a base for $\mc{B}$ such that $\text{cl}_{\tau_2} A_n\subset \text A_{n+1}$ for each $n\in\omega$.
\end{prop}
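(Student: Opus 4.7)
The plan is to adapt the diagonal construction of Proposition 3.4, but to replace each chosen base element by a $\tau_1$-open ``inflation'' supplied by the $(\tau_1,\tau_2)$-properness of $\mc{B}$. The key organisational point is that every ``choice'' must be realised as the least element of a non-empty subset of $\omega$, so that no appeal to countable choice sneaks in.

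First I would dispose of the enumeration. Since $\mc{B}$ has no maximal element, a finite base is impossible: the finite union of its members would sit in $\mc{B}$ and dominate everything in $\mc{B}$. Hence a second-countable base is necessarily in bijection with $\omega$ and can be listed in $\mathbf{ZF}$ as $\mc{C}=\{C_n:n\in\omega\}$ by enumerating its (infinite) image in $\omega$ in increasing order. Next I would record the following auxiliary fact, obtained by combining properness with the base property of $\mc{C}$: for every $B\in\mc{B}$ there exists $k\in\omega$ with $\text{cl}_{\tau_2}(B)\subseteq\text{int}_{\tau_1}(C_k)$. Indeed, by properness pick $B'\in\mc{B}$ with $\text{cl}_{\tau_2}(B)\subseteq\text{int}_{\tau_1}(B')$, and then any $C_k\in\mc{C}$ containing $B'$ works by monotonicity of the interior. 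Using the no-maximal-element hypothesis to enlarge $B$ strictly beforehand, the same argument yields the strict version $B\subsetneq\text{int}_{\tau_1}(C_k)$.

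The recursion then writes itself. Put $k_0=\min\{k\in\omega:C_0\subseteq\text{int}_{\tau_1}(C_k)\}$ and $A_0=\text{int}_{\tau_1}(C_{k_0})$. Given $A_n=\text{int}_{\tau_1}(C_{k_n})$, one observes that $\text{cl}_{\tau_2}(A_n)\in\mc{B}$ (apply properness to $C_{k_n}\supseteq A_n$), so $D_n=\text{cl}_{\tau_2}(A_n)\cup C_{n+1}\in\mc{B}$, and by the strict form of the auxiliary fact the set $\{k\in\omega:D_n\subsetneq\text{int}_{\tau_1}(C_k)\}$ is non-empty; let $k_{n+1}$ be its minimum and $A_{n+1}=\text{int}_{\tau_1}(C_{k_{n+1}})$. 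Then every $A_n$ is $\tau_1$-open and lies in $\mc{B}$, the inclusion $\text{cl}_{\tau_2}(A_n)\subsetneq A_{n+1}$ is strict by construction, and $\{A_n:n\in\omega\}$ is a base for $\mc{B}$ because $C_{n+1}\subseteq D_n\subsetneq A_{n+1}$ for every $n\in\omega$ (together with $C_0\subseteq A_0$), combined with the base property of $\mc{C}$.

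The main obstacle is the $\mathbf{ZF}$-hygiene issue flagged in the preamble of Section 3: Hu's original proof tacitly picks, at each stage of the recursion, an element of $\mc{B}$ (or of $\mc{C}$) realising several non-trivial properties at once, and such simultaneous selections from countably many non-empty sets silently invoke $\mathbf{CC}$. The construction above avoids this by arranging that at each stage the witness one needs is indexed by the least element of a definable non-empty subset of $\omega$, which furnishes a canonical choice without any axiom of choice.
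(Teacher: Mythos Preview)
Your proof is correct and follows essentially the same route as the paper's: a recursion in which each step selects, via the \emph{least index} in $\omega$, a base element whose $\tau_1$-interior strictly contains $\text{cl}_{\tau_2}(A_m)\cup C_{m+1}$, thereby avoiding any appeal to countable choice. The only cosmetic difference is that the paper first invokes Proposition~3.4 to make the enumeration $\{C_n\}$ strictly increasing and starts with $A_0=\text{int}_{\tau_1}C_0$, whereas you work with an arbitrary enumeration and initialise via $k_0=\min\{k:C_0\subseteq\text{int}_{\tau_1}C_k\}$; neither change affects the argument.
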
  
\begin{proof} Take, by Proposition 3.4, a strictly increasing countable base $\mc{C}=\{C_n: n\in\omega\}$ for $\mc{B}$.  Let $A_0=\text{int}_{\tau_1} C_0$. Suppose that, for $m\in\omega$, we have already defined a $\tau_1$-open set $A_m\in\mc{B}$. We use similar arguments to the ones given in the proof to Proposition 3.4 with the exception that, since $\mc{B}$ is $(\tau_1, \tau_2)$-proper, we may define $n_{m+1}=\min\{ n\in\omega: \text{cl}_{\tau_2}( A_m\cup C_{m+1})\subset \text{int}_{\tau_1} C_{n}\}$ and $A_{m+1}=\text{int}_{\tau_1} C_{n_{m+1}}$. 
\end{proof} 

\section{Quasi-metrization theorems for bornological biuniverses}

If $\tau$ is a topology on $X$ and if $A\subseteq X$, we denote $\tau\mid  _A=\{ A\cap V: V\in \tau\}$.
For the real line $\mb{R}$, the topology $u=\{\emptyset, \mb{R}\}\cup\{ (-\infty; a): a\in\mb{R}\}$ is called \textbf{the upper topology} on $\mb{R}$, while the topology $l=\{\emptyset, \mb{R}\}\cup\{(a; +\infty): a\in\mb{R}\}$ is called \textbf{the lower topology} on $\mb{R}$ (cf. \cite{FL}, \cite{Sal}). If $A\subseteq \mb{R}$, then we use $(A, u, l)$ as an abbreviation of $(A, u\mid _A, l\mid _A)$ where  $u=u\mid _A$ and $l=l\mid _A$.

\begin{defi} 
Suppose that $(X, \tau^{X}_1, \tau^{X}_2)$ and $(Y, \tau^{Y}_1, \tau^{Y}_2)$ are bitopological spaces. A mapping $f:X\to Y$ is called \textbf{bicontinuous with respect to $(\tau^{X}_1, \tau^{X}_2, \tau^{Y}_1, \tau^{Y}_2)$} (in abbreviation: bicontinuous) if $$\{f^{-1}(V): V\in\tau^{Y}_i\}\subseteq \tau^{X}_i$$ for each $i\in\{1, 2\}$.
\end{defi} 
 
A crucial role in the study of bornologies is played by a concept of a characteristic function of a bornology which is also called a forcing function (cf.\cite{Hu}, \cite{Be}). We need to extend this concept to bornological biuniverses. 
 
\begin{defi} Let $(X, \tau_1, \tau_2)$ be a bitopological space. Then a $(\tau_1, \tau_2)$\textbf{-characteristic function} for a bornology $\mc{B}$ in $X$, is a  bicontinuous function $f:(X, \tau_1, \tau_2)\to ([0; +\infty), u, l)$ such that $$\mc{B}=\{ A\subseteq X: \sup\{ f(x): x\in A\}< +\infty\}.$$
\end{defi}

\begin{f}[cf. 4.1 of \cite{Kel}]
Let $d$ be a quasi-metric on $X$ and let $x_0\in X$. Define $f(x)=d(x_0, x)$ for $x\in X$. Then the function $f:( X, \tau (d), \tau (d^{-1}))\to ([0;+\infty), u, l)$ is bicontinuous. 
\end{f}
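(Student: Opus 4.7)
The plan is to unpack the definition of bicontinuity and check each of the two preimage conditions separately, using only the triangle inequality together with the quirk in the definition of $\tau(d)$ that neighbourhoods are measured in radii of the form $\tfrac{1}{2^n}$ rather than arbitrary positive reals. The basic open sets of $u$ restricted to $[0;+\infty)$ are of the form $[0;a)$ for $a>0$, and the basic open sets of $l$ restricted to $[0;+\infty)$ are of the form $(a;+\infty)\cap [0;+\infty)$ for $a\geq 0$. Accordingly, it suffices to verify that $f^{-1}([0;a))\in\tau(d)$ and $f^{-1}((a;+\infty))\in\tau(d^{-1})$ for each relevant $a$.

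For the first, observe that $f^{-1}([0;a))=\{x\in X:d(x_0,x)<a\}=B_d(x_0,a)$. Given $y\in B_d(x_0,a)$, set $\varepsilon=a-d(x_0,y)>0$ and choose $n\in\omega$ with $\tfrac{1}{2^n}<\varepsilon$. For any $z\in B_d(y,\tfrac{1}{2^n})$, the triangle inequality gives $d(x_0,z)\leq d(x_0,y)+d(y,z)<d(x_0,y)+\varepsilon=a$, so $B_d(y,\tfrac{1}{2^n})\subseteq B_d(x_0,a)$. This shows $f^{-1}([0;a))\in\tau(d)$.

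For the second, $f^{-1}((a;+\infty))=\{x\in X:d(x_0,x)>a\}$. Given $y$ in this set, set $\varepsilon=d(x_0,y)-a>0$ and pick $n\in\omega$ with $\tfrac{1}{2^n}<\varepsilon$. For any $z$ with $d^{-1}(y,z)=d(z,y)<\tfrac{1}{2^n}$, the triangle inequality in the form $d(x_0,y)\leq d(x_0,z)+d(z,y)$ yields $d(x_0,z)>d(x_0,y)-\tfrac{1}{2^n}>a$. Hence $B_{d^{-1}}(y,\tfrac{1}{2^n})\subseteq f^{-1}((a;+\infty))$, which shows $f^{-1}((a;+\infty))\in\tau(d^{-1})$.

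There is no substantial obstacle; the only point of mild care is the mismatch between arbitrary positive radii and radii of the form $\tfrac{1}{2^n}$ in the definition of $\tau(d)$, which is handled trivially by choosing $n$ large enough. No use of countable choice is needed, since for each $y$ a single explicit $n$ depending on $\varepsilon$ is selected, not a sequence of choices.
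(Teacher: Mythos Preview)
Your proof is correct. The paper itself does not supply a proof of this statement: it records it as a Fact with a cross-reference to 4.1 of \cite{Kel}, so there is no argument in the paper to compare against beyond that citation. Your direct verification via the triangle inequality is exactly the standard one, and your remark that no countable choice is invoked is appropriate given the paper's concern with working in $\mathbf{ZF}$.
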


\begin{defi} We say that a (quasi)-metric $d$ \textbf{induces a bornological biuniverse} $((X,\tau_1, \tau_2), \mc{B})$ if $\tau_1=\tau(d), \tau_2=\tau(d^{-1})$  and $\mc{B}=\mc{B}_d(X)$. 
\end{defi}

\begin{prop} Suppose that a bornological biuniverse $((X, \tau_1, \tau_2), \mc{B})$ is (quasi)-metrizable. Then there exists a $(\tau_1, \tau_2)$-characteristic function for the bornology $\mc{B}$. 
\end{prop}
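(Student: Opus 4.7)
The plan is to use the obvious forcing function coming from the distance to a fixed basepoint and to verify the two required properties directly. By hypothesis there exists a quasi-metric $d$ on $X$ with $\tau_1=\tau(d)$, $\tau_2=\tau(d^{-1})$ and $\mc{B}=\mc{B}_d(X)$. If $X=\vn$ the empty function works vacuously, so assume $X\neq\vn$ and fix any $x_0\in X$. I define $f:X\to[0;+\infty)$ by $f(x)=d(x_0,x)$.

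First I would invoke Fact 4.3 directly: it states precisely that this $f$ is bicontinuous as a map from $(X,\tau(d),\tau(d^{-1}))$ to $([0;+\infty),u,l)$, which is exactly the bicontinuity required by Definition 4.2. So the topological half of the conclusion is free; no countable choice is needed here.

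The remaining step is to verify
\[
 \mc{B}\;=\;\{A\subseteq X:\sup\{f(x):x\in A\}<+\infty\}.
\]
For the inclusion $\subseteq$, if $A\in\mc{B}=\mc{B}_d(X)$ then $A\subseteq B_d(y,r)$ for some $y\in X$ and $r>0$, and the triangle inequality gives $f(x)=d(x_0,x)\leq d(x_0,y)+d(y,x)<d(x_0,y)+r$ for every $x\in A$, so $\sup f(A)<+\infty$. For $\supseteq$, if $\sup f(A)=M<+\infty$, then $A\subseteq B_d(x_0,M+1)$, hence $A\in\mc{B}_d(X)=\mc{B}$.

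I do not expect any genuine obstacle; the only points worth flagging are the trivial case $X=\vn$ (handled above) and the need to cite Fact 4.3 rather than redo the bicontinuity argument by hand. The argument is carried out entirely in $\mathbf{ZF}$ since it requires only a single arbitrary choice of the basepoint $x_0$.
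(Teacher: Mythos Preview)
Your proof is correct and follows essentially the same route as the paper: fix a basepoint $x_0$, set $f(x)=d(x_0,x)$, and invoke Fact 4.3 for bicontinuity. The paper's own argument is in fact terser than yours---it simply asserts that $f$ is a characteristic function without writing out the two inclusions for $\mc{B}=\{A:\sup f(A)<+\infty\}$ or mentioning the empty case---so your version is a mildly more detailed rendering of the same idea.
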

\begin{proof} Let us consider an arbitrary point $x_0\in X$ and any (quasi)-metric $d$ such that $d$ induces the bornological biuniverse $((X, \tau_1, \tau_2), \mc{B})$. Then, by Fact 4.3, a $(\tau_1, \tau_2)$-characteristic function for $\mc{B}$ is the function $f: X\to \mb{R}$ where $f(x)=d(x_0, x)$ for $x\in X$. 
\end{proof}

\begin{prop} Suppose that a bornological biuniverse $((X, \tau_1, \tau_2), \mc{B})$ is such that $\mc{B}$ has a $(\tau_1, \tau_2)$-characteristic function. Then $\mc{B}$ is both second-countable and $(\tau_1, \tau_2)$-proper. 
\end{prop}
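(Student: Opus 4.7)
The plan is to unpack what bicontinuity of $f:(X,\tau_1,\tau_2)\to([0;+\infty),u,l)$ gives us in terms of preimages, then construct an explicit countable base from these preimages and verify properness from the same sets.

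First I would observe that the $u$-open subsets of $[0;+\infty)$ are (apart from $\emptyset$ and $[0;+\infty)$) exactly the half-infinite initial segments $[0;a)$, and the $l$-open subsets are exactly the complementary half-infinite final tails $(a;+\infty)$. Hence for every $n\in\omega$ the set $A_n=f^{-1}([0;n+1))$ is $\tau_1$-open, and the set $f^{-1}([0;n+1])=X\setminus f^{-1}((n+1;+\infty))$ is $\tau_2$-closed. Moreover, since $\sup\{f(x):x\in A_n\}\le n+1<+\infty$, each $A_n$ is a member of $\mc{B}$, and the chain of inclusions $A_n\subseteq f^{-1}([0;n+1])\subseteq A_{n+1}$ holds for every $n\in\omega$.

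To see that $\mc{A}=\{A_n:n\in\omega\}$ is a countable base for $\mc{B}$, take any $A\in\mc{B}$. By definition of $\mc{B}$, the number $M=\sup\{f(x):x\in A\}$ is finite; choosing any $n\in\omega$ with $M<n+1$ we get $A\subseteq f^{-1}([0;n+1))=A_n$. This gives second-countability of $\mc{B}$ with no appeal to any form of choice, since the base $\mc{A}$ is produced by a definable recipe from $f$.

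For $(\tau_1,\tau_2)$-properness, given $A\in\mc{B}$ I would fix $n\in\omega$ with $A\subseteq A_n$ as above. Since $f^{-1}([0;n+1])$ is $\tau_2$-closed and contains $A_n$, one has
\[
\cl_{\tau_2}A\subseteq\cl_{\tau_2}A_n\subseteq f^{-1}([0;n+1])\subseteq A_{n+1},
\]
and because $A_{n+1}$ is itself $\tau_1$-open we have $A_{n+1}=\text{int}_{\tau_1}A_{n+1}$. Thus $B=A_{n+1}\in\mc{B}$ witnesses the required property. No step of the argument is delicate; the only thing that requires attention is matching the definitions of $u$ and $l$ correctly with the two directions of bicontinuity, so that the set $f^{-1}([0;n+1])$ really is $\tau_2$-closed rather than $\tau_1$-closed, which is exactly what makes $\tau_2$-closure cooperate with $\tau_1$-interior in the properness inequality.
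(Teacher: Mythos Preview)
Your proof is correct and follows essentially the same approach as the paper: define a countable base $\{A_n:n\in\omega\}$ via preimages of intervals under the characteristic function $f$, then read off second-countability and $(\tau_1,\tau_2)$-properness from the bicontinuity of $f$. The only cosmetic difference is that the paper takes $A_n=f^{-1}((-\infty;n])$ (which is already $\tau_2$-closed) while you take $A_n=f^{-1}([0;n+1))$ (which is already $\tau_1$-open) and sandwich with the $\tau_2$-closed set $f^{-1}([0;n+1])$; both choices yield the same conclusion with the same idea.
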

\begin{proof} 
Let $f$ be any $(\tau_1, \tau_2)$-characteristic function for $\mc{B}$. For $n\in\omega$, let $A_n=f^{-1}((-\infty, n])$. Then the collection $\{A_n: n\in\omega\}$ is a countable base for $\mc{B}$ such that $\text{cl}_{\tau_2} A_{n}\subseteq \text{int}_{\tau_1}A_{n+1}$.
\end{proof}

\begin{thm} Let us suppose that $(X, \tau_1, \tau_2)$ is a (quasi)-metrizable bitopological space and that $\mc{B}$ is a bornology in $X$. Then the following conditions are all equivalent:
\begin{enumerate}
\item[(i)] the bornological biuniverse $((X, \tau_1, \tau_2), \mc{B})$ is (quasi)-metrizable;
\item[(ii)] there exists a $(\tau_1,\tau_2)$-characteristic function for $\mc{B}$;
\item[(iii)] the bornology $\mc{B}$ is $(\tau_1, \tau_2)$-proper and it has a countable base.
\end{enumerate}
\end{thm}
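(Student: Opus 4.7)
The implications $(\mathrm{i})\Rightarrow(\mathrm{ii})$ and $(\mathrm{ii})\Rightarrow(\mathrm{iii})$ are Propositions 4.5 and 4.6, so the work lies in $(\mathrm{iii})\Rightarrow(\mathrm{i})$. My plan is to first build a $(\tau_1,\tau_2)$-characteristic function $f$ for $\mc{B}$ (which is exactly $(\mathrm{ii})$) and then use $f$ to twist a given (quasi)-metric of the bitopology into one that realizes the whole biuniverse. A preliminary reduction disposes of the degenerate case: if $\mc{B}$ has a maximal element, it must equal $X$ itself (since singletons lie in $\mc{B}$), and then truncating any (quasi)-metric of $(\tau_1,\tau_2)$ at $1$ already produces the required $d$. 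Otherwise, Proposition 3.5 yields a strictly increasing sequence $(A_n)_{n\in\omega}$ of $\tau_1$-open members of $\mc{B}$ forming a base for $\mc{B}$ with $\text{cl}_{\tau_2}A_n\subset A_{n+1}$, and $\bigcup_n A_n=X$ because every singleton is in $\mc{B}$. Fix any (quasi)-metric $\rho$ inducing $(\tau_1,\tau_2)$.

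For each $n$ I will separate $\text{cl}_{\tau_2}A_n$ from $X\setminus A_{n+1}$ by a bicontinuous Urysohn-type function built from $\rho$. Put $\psi_n(x)=\inf\{\rho(y,x):y\in\text{cl}_{\tau_2}A_n\}$ and $\theta_n(x)=\inf\{\rho(x,y):y\in X\setminus A_{n+1}\}$. The two directions of the triangle inequality give that $\psi_n$ is $\tau_1$-upper semicontinuous and $\tau_2$-lower semicontinuous, while $\theta_n$ is $\tau_1$-lower semicontinuous and $\tau_2$-upper semicontinuous. Disjointness of $\text{cl}_{\tau_2}A_n$ and $X\setminus A_{n+1}$ together with the fact that $A_{n+1}$ is $\tau_1$-open forces $\psi_n+\theta_n>0$ everywhere, so $g_n:=\psi_n/(\psi_n+\theta_n)$ is a well-defined $[0,1]$-valued function with $g_n=0$ on $\text{cl}_{\tau_2}A_n$ and $g_n=1$ on $X\setminus A_{n+1}$. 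The step I expect to be the main obstacle is checking bicontinuity of $g_n$ in this asymmetric setting, because numerator and denominator carry opposite semicontinuities; the key algebraic observation is that $\{g_n<a\}=\{(1-a)\psi_n<a\theta_n\}$ is the sublevel set of a $\tau_1$-upper semicontinuous function and is therefore $\tau_1$-open, while dually $\{g_n>a\}$ is $\tau_2$-open. I then set $f(x)=\sum_{n\in\omega}g_n(x)$. For $x\in A_m$ we have $g_n(x)=0$ for every $n\geq m$, so the sum is finite with $f(x)\leq m$, and on the $\tau_1$-open cutoff $A_m$ the function $f$ coincides with a finite sum of bicontinuous functions; this yields that $\{f<a\}$ is $\tau_1$-open (piecing over the cutoffs $A_m$) and $\{f>a\}$ is $\tau_2$-open (no cutoff is needed because the tail of the sum is nonnegative). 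Finally $f$ is a characteristic function: $A\subseteq A_m$ gives $\sup_A f\leq m$, whereas for $A\notin\mc{B}$ any $x\in A\setminus A_m$ satisfies $x\notin A_{n+1}$ for all $n<m$, hence $g_n(x)=1$ and $f(x)\geq m$, so $\sup_A f=\infty$.

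To pass from $f$ to $d$, let $\tilde\rho(x,y)=\min\{\rho(x,y),1\}$, which is still a (quasi)-metric inducing $(\tau_1,\tau_2)$ and in which every subset is bounded, and define
\[
d(x,y)=\tilde\rho(x,y)+[f(y)-f(x)]^+,
\]
replacing $[f(y)-f(x)]^+$ by $|f(x)-f(y)|$ in the symmetric case so that $d$ remains a metric. The triangle inequality for $d$ reduces to that of $\tilde\rho$ together with $[a]^++[b]^+\geq[a+b]^+$. From $d\geq\tilde\rho$ one gets $\tau(d)\supseteq\tau_1$, and each $d$-ball $B_d(x,r)$ contains $B_{\tilde\rho}(x,r/2)\cap f^{-1}((-\infty,f(x)+r/2))$, which is $\tau_1$-open by the $\tau_1$-upper semicontinuity of $f$, so $\tau(d)=\tau_1$; a strictly parallel argument using the $\tau_2$-lower semicontinuity of $f$ gives $\tau(d^{-1})=\tau_2$. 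Because $\tilde\rho\leq 1$, a set $A$ is $d$-bounded precisely when $\sup_A f<\infty$, i.e.\ when $A\in\mc{B}$. The entire construction is explicit in the single fixed $\rho$ and the single fixed sequence $(A_n)$, so no countable choice is invoked and the argument stays within $\mathbf{ZF}$.
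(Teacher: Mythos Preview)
Your proof is correct and follows essentially the same route as the paper's: construct a $(\tau_1,\tau_2)$-characteristic function from Urysohn-type quotients $g_n=\psi_n/(\psi_n+\theta_n)$ built out of the given quasi-metric, then add (the positive part of) its increment to a truncated quasi-metric. In fact your $f=\sum_n g_n$ is literally the paper's piecewise function $\psi(x)=h_n(x)+n$ (on each ring $A_{n+1}\setminus A_n$ all earlier summands equal $1$ and all later ones vanish), and your sublevel-set/semicontinuity argument for bicontinuity is a clean repackaging of the paper's explicit $\epsilon$-$\delta$ case analysis.
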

\begin{proof}Let us consider any quasi-metric $\sigma$ on $X$ such that $\tau_1=\tau (\sigma)$ and $\tau_2=\tau (\sigma^{-1})$. Put   $d(x,y)=\min\{\sigma(x,y), 1\}$ for $x,y\in X$. It is easy to observe that if $X\in\mc{B}$, then all conditions $(i)-(iii)$ are fulfilled. Assume that $X\notin\mc{B}$. It follows from Proposition 4.5  that $(i)$ implies $(ii)$. Assume $(ii)$ and suppose that $f$ is a $(\tau_1, \tau_2)$-characteristic function for $\mc{B}$. For $x,y\in X$, let $\rho (x, y)=d(x, y)+\max\{f(y)-f(x), 0\}$. Then the quasi-metric $\rho$ induces the bornological biuniverse $((X, \tau_1, \tau_2), \mc{B})$. In the case when $\sigma$ is a metric, we can put $\rho (x, y)=d(x, y)+\mid f(y)-f(x)\mid $ to obtain a metric that induces $((X, \tau_1, \tau_2), \mc{B})$. Hence $(ii)$ implies $(i)$. 

Now, assume that $(iii)$ holds. Since $X\notin\mc{B}$, it follows from Proposition 3.5 that there exists a base $\{A_n: n\in\omega\}$ for $\mc{B}$ such that $\text{cl}_{\tau_2}A_n$ is a proper subset of $ \text{int}_{\tau_1}A_{n+1}$ for each $n\in\omega$. We may assume that $A_0=\emptyset$. For $n\in\omega\setminus\{0\}$ and $x\in X$, let $f_n(x)=d(\text{cl}_{\tau_2}A_n, x)$ and $g_n(x)=d(x, X\setminus\text{int}_{\tau_1}A_{n+1})$. Then $f_n:(X,\tau_1, \tau_2)\to([0; +\infty), u, l)$ and $g_n:(X,\tau_1, \tau_2)\to ([0; +\infty), l, u)$ are bicontinuous. For each $x\in X$, we have  $f_n(x)+g_n(x)\neq 0$, so we can put $h_n(x)=\frac{f_n(x)}{f_n(x)+g_n(x)}$. Moreover, we define $h_0(x)=1$ for each $x\in X$. It is easy to check that the function $h_n:(X, \tau_1,\tau_2)\to ([0; 1], u, l)$ is bicontinuous for each $n\in\omega$ (cf. the proof to Corollary 2.2.16 in \cite{Sal}). Let $\psi(x)=h_n(x)+n$ when $x\in\text{int}_{\tau_1}A_{n+1}\setminus\text{int}_{\tau_1}A_n$. We are going to prove that the function $\psi$ is bicontinuous with respect to $(\tau_1, \tau_2 , u, l)$. 

Let $x\in\text{int}_{\tau_1}A_{n+1}\setminus\text{int}_{\tau_1}A_n$ and $y\in\text{int}_{\tau_1}A_{m+1}\setminus\text{int}_{\tau_1}A_m$. Consider any real numbers $r, s$ such that $r< \psi(x)< s$. We assume that $n\neq 0$.  There exists $U_s\in\tau_1$ such that $x\in U_s\subseteq\text{int}_{\tau_1}A_{n+1}$ and if $y\in U_s$, then $h_n(y)+n< s$. There exists $V_r\in\tau_2$ such that $x\in V_r\subseteq X\setminus\text{cl}_{\tau_2} A_{n-1}$ and if $y\in V_r$,  then $h_n(y)+n>r$. Of course, if $m=n$, then $\psi(y)<s$ when $y\in U_s$, while $\psi(y)>r$ when $y\in V_r$. Let us assume that $m\neq n$.
Suppose that $y\in U_s$. Then $m<n$, so $\psi(y)\leq 1+m\leq n\leq \psi(x)<s$. 

Suppose that $y\in V_r$. If $m>n$, we have $\psi(y)\ge m\ge 1+n\ge\psi(x)>r$. Let $m<n$. Since $y\notin\text{int}_{\tau_1}A_{n-1}$, we have $m+1\ge n$.  As $m+1\leq n$, we have $m+1=n$. If $x\notin\text{cl}_{\tau_2}A_n$ we could take $V_r^{*}=V_r\cap( X\setminus\text{cl}_{\tau_2}A_n)\in\tau_2$ and observe that if $y\in V^{*}_r$, then $m\ge n$ and $\psi(y)>r$. Let us consider the case when $m<n$ and $x\in\text{cl}_{\tau_2}A_n$. Then  $\psi(x)=n$. We take a positive real number $\epsilon$ such that $n-\epsilon>r$. Since $h_{n-1}(x)=1$, there exists $W_{\epsilon}\in\tau_2$ such that $x\in W_{\epsilon}$ and $h_{n-1}(t)>1-\epsilon$ for each $t\in W_{\epsilon}$. If $y\in W_{\epsilon}\cap V_r$ and $m+1=n$, then $\psi(y)=h_{n-1}(y)+n-1> 1-\epsilon +n-1>r$. The case when $n=0$ is also obvious now.  This completes the proof that $\psi$ is bicontinuous with respect to $(\tau_1, \tau_2, u, l)$. It is easy to check that $\mc{B}=\{ A\subseteq X: \sup\psi(A)<+\infty\}$, so  $\psi$ is a $(\tau_1, \tau_2)$-characteristic function for $\mc{B}$. Hence $(ii)$ follows from $(iii)$. To complete the proof, it suffices to apply Proposition 4.6.  
\end{proof}

\begin{cor} Theorem 1.10 is true. 
\end{cor}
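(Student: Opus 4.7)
The plan is to obtain Theorem 1.10 as an immediate consequence of Theorem 4.7 by splitting the biconditional into its two implications and, in each case, peeling off the quasi-metrizability of the underlying bitopological space so that the hypothesis of Theorem 4.7 is satisfied.

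For the forward direction I would argue as follows. Assume $((X,\tau_1,\tau_2),\mc{B})$ is quasi-metrizable. By Definition 1.7 there is a quasi-metric $d$ on $X$ with $\tau_1=\tau(d)$, $\tau_2=\tau(d^{-1})$ and $\mc{B}=\mc{B}_d(X)$. The first two equalities say that $(X,\tau_1,\tau_2)$ is quasi-metrizable in the sense of Definition 1.1, so the standing hypothesis of Theorem 4.7 is in force. Since condition (i) of Theorem 4.7 holds, the implication (i) $\Rightarrow$ (iii) of that theorem gives that $\mc{B}$ has a countable base and is $(\tau_1,\tau_2)$-proper.

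For the backward direction, assume that $(X,\tau_1,\tau_2)$ is quasi-metrizable and that $\mc{B}$ has a countable base and is $(\tau_1,\tau_2)$-proper. The first assumption is exactly the running hypothesis of Theorem 4.7, while the other two are condition (iii) of that theorem; the implication (iii) $\Rightarrow$ (i) then produces a quasi-metric $\rho$ on $X$ inducing the biuniverse $((X,\tau_1,\tau_2),\mc{B})$, which is precisely the quasi-metrizability required by Definition 1.7.

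There is no genuine obstacle at this stage, since the substantive analytic work, namely the construction of the characteristic function $\psi$ and of the quasi-metric $\rho$ from a countable $(\tau_1,\tau_2)$-proper base, has already been carried out in the proof of Theorem 4.7 entirely within \textbf{ZF}; Corollary 4.8 merely records that once the assumption of quasi-metrizability of the bitopological space is absorbed into the statement, the equivalence (i) $\Leftrightarrow$ (iii) of Theorem 4.7 becomes the characterization claimed in Theorem 1.10.
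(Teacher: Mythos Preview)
Your proposal is correct and is exactly the approach intended by the paper: Corollary 4.8 is stated without proof because it is the immediate specialization of Theorem 4.7 obtained by absorbing the quasi-metrizability of $(X,\tau_1,\tau_2)$ into the statement and reading off the equivalence (i)$\Leftrightarrow$(iii). Your explicit handling of the two directions simply spells out this one-line deduction.
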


\begin{cor} The assumption of $\mathbf{ZFC}$ can be weakened to $\mathbf{ZF}$ in Theorem 1.9.
\end{cor}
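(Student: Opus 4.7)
The plan is to derive Theorem 1.9 as the diagonal specialization of Theorem 1.10 (equivalently, of Theorem 4.7), which has been established in ZF. To every bornological universe $((X, \tau), \mc{B})$ I attach the bornological biuniverse $((X, \tau, \tau), \mc{B})$ and transfer the three relevant properties across.

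The first step is to check that the classical notions coincide with their biuniverse analogues when $\tau_1 = \tau_2 = \tau$. Since any metric $d$ satisfies $d^{-1} = d$, one has $\tau(d) = \tau(d^{-1})$, so metrizability of $(X, \tau)$ is equivalent to metrizability of the bitopological space $(X, \tau, \tau)$. Moreover, for a metric $d$ the equality $\mc{B}_d(X) = \{A \subseteq X : \text{diam}_d(A) < +\infty\}$ holds (by the triangle inequality in one direction, and by choosing an arbitrary basepoint of $A$ together with the supremum of distances in the other), so Hu's notion of metrizability of a bornological universe given in Definition 1.8(i) coincides with metrizability of the diagonal biuniverse in the sense of Definition 1.6. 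Finally, as the paper observes immediately after Definition 1.3, properness of $((X, \tau), \mc{B})$ in the sense of Definition 3.4 of \cite{Hu} is precisely $\tau$-properness of $\mc{B}$, which by definition is the same as $(\tau, \tau)$-properness.

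With these identifications in hand, I apply the metric case of Theorem 4.7 to the biuniverse $((X, \tau, \tau), \mc{B})$: it asserts that this biuniverse is metrizable if and only if $\mc{B}$ is $(\tau, \tau)$-proper, $\mc{B}$ has a countable base, and $(X, \tau, \tau)$ is metrizable. Translating each clause back through the previous paragraph yields verbatim the statement of Theorem 1.9. The proof of Theorem 4.7 was carried out entirely in ZF --- in particular the strictly increasing countable base of $\tau_1$-open sets was extracted in Proposition 3.5 by the deterministic ``$\min$'' construction, avoiding any appeal to countable choice --- so no fragment of AC enters anywhere. There is no genuine obstacle here; the substantive work was already done in Theorem 4.7, and this corollary is merely the unpacking of that theorem along the diagonal.
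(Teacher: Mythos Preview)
Your proposal is correct and matches the paper's intended derivation: Corollary 4.9 is stated without proof, immediately after Theorem 4.7 and Corollary 4.8, precisely because it is the diagonal case $\tau_1=\tau_2=\tau$ of the metric version of Theorem 4.7. Your explicit verification that Hu's metrizability (via $\text{diam}_d$) agrees with $\mc{B}_d(X)$ for metrics, and that Hu-properness is $(\tau,\tau)$-properness, simply spells out the identifications the paper leaves to the reader.
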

 
 \begin{cor} Let us suppose that $(X, \tau)$ is a topological space and $\mc{B}$ is a bornology in $X$. Then the bornological biuniverse $((X, \tau, \tau), \mc{B})$ is quasi-metrizable if and only if the bornological universe $((X, \tau), \mc{B})$ is metrizable.
 \end{cor}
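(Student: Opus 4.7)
The plan is to derive Corollary 4.10 as a direct consequence of Theorem 4.7 in both its quasi-metric and metric forms, together with the paper's earlier observation that metrizability of the biuniverse $((X,\tau,\tau),\mc{B})$ is equivalent to Hu-metrizability of the universe $((X,\tau),\mc{B})$. It therefore suffices to prove that, when $\tau_1 = \tau_2 = \tau$, the biuniverse is quasi-metrizable if and only if it is metrizable. One direction is immediate: every metric is a quasi-metric, so any metric witnessing metrizability of the biuniverse also witnesses its quasi-metrizability.

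For the nontrivial direction, I assume $((X,\tau,\tau),\mc{B})$ is quasi-metrizable. By Theorem 4.7 (quasi-metric version), $\mc{B}$ is $\tau$-proper with a countable base, and $(X,\tau,\tau)$ is a quasi-metrizable bitopological space. Let $d$ be a witnessing quasi-metric, so $\tau(d) = \tau(d^{-1}) = \tau$. I form the symmetrization $d^s(x,y) = \max\{d(x,y), d(y,x)\}$, which is a metric; since $B_{d^s}(x,r) = B_d(x,r) \cap B_{d^{-1}}(x,r)$, the topology $\tau(d^s)$ is the join of $\tau(d)$ and $\tau(d^{-1})$, which collapses to $\tau$. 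Hence $(X,\tau)$ is metrizable and $(X,\tau,\tau)$ is a metrizable bitopological space. Now Theorem 4.7 in its metric form, applied with the same bornological data, yields a metric $\rho$ on $X$ with $\tau(\rho) = \tau$ and $\mc{B} = \mc{B}_\rho(X)$; since for a metric one has $\mc{B}_\rho(X) = \{A \subseteq X : \text{diam}_\rho(A) < \infty\}$, this is precisely Hu-metrizability of $((X,\tau),\mc{B})$.

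The only step requiring any care is the verification that the symmetrization $d^s$ induces the topology $\tau$, and this is a routine choice-free neighborhood computation once the ball identity above is noted. Every other ingredient is a direct invocation of Theorem 4.7 or of the already-stated equivalence between metrizability of the biuniverse and Hu-metrizability of the universe; since Theorem 4.7 has been proved entirely within \textbf{ZF}, the corollary likewise remains a \textbf{ZF} theorem.
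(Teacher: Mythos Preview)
Your proof is correct and follows essentially the same route as the paper: symmetrize a quasi-metric $d$ with $\tau(d)=\tau(d^{-1})=\tau$ via $\max\{d,d^{-1}\}$ to show that $(X,\tau)$ is metrizable, then invoke Theorem 4.7 twice (once to extract that $\mc{B}$ is second-countable and $\tau$-proper, once in its metric form to produce the Hu-metrizing metric). The only cosmetic difference is that the paper takes $d$ to be the quasi-metric that already induces the biuniverse, whereas you first pass through Theorem 4.7 and then pick any quasi-metric for the bitopological space; either way the argument lands in the same place.
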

 \begin{proof} It suffices to prove that if there exists a quasi-metric which induces $((X, \tau, \tau), \mc{B})$, then $((X, \tau), \mc{B})$ is metrizable. Let $d$ be a quasi-metric in $X$ such that $\tau=\tau(d)=\tau(d^{-1})$ and $\mc{B}=\mc{B}_d(X)$. Define $\rho=\max\{ d, d^{-1}\}$. Then $\rho$ is a metric in $X$ such that $\tau(\rho)=\tau$. Moreover, by Theorem 4.7, the bornology $\mc{B}$ is second-countable and $\tau$-proper; hence, the bornological universe $((X, \tau), \mc{B})$ is metrizable by Theorem 4.7. 
 \end{proof} 
 
 \begin{exam} Let $d$ be the quasi-metric from Example 1.6. Then $\tau(d)=\tau(d^{-1})=\mc{P}(\omega)$. Moreover, $\mc{B}_d(\omega)=\mc{P}(\omega)$ and $\mc{B}_{d^{-1}}(\omega)=\mathbf{FB}(\omega)$. The metric $\rho=\max\{d, d^{-1}\}$ does not induce $((\omega, \mc{P}(\omega), \mc{P}(\omega)), \mc{B}_{d}(\omega))$; however, $\rho$ induces $((\omega, \mc{P}(\omega), \mc{P}(\omega)), \mathbf{FB}(\omega))$.
 \end{exam}

 \begin{exam} Let $\tau_{S,r}$ be the right half-open interval topology in $\mb{R}$ and let $\tau_{S, l}$ be the left half-open interval topology in $\mb{R}$. Then $(\mb{R}, \tau_{S, r})$ is the Sorgenfrey line. 
 \begin{enumerate}
  \item[(i)]  The bornological biuniverse $((\mb{R}, \tau_{S,r}, \tau_{S,l}), \mathbf{UB}(\mb{R}))$
 is not metrizable but it is quasi-metrizable by the following quasi-metric $\rho_{S}$:
  $$ \rho_{S}(x,y)=\left\{ \begin{array}{ll}
 y-x, & x\le y\\
 1, & x>y.\end{array}\right.$$
 Let us notice that  $\mc{B}_{\rho^{-1}_{S}}(\mb{R})=\mathbf{LB}(\mb{R})$ and the quasi-metric $\rho_S$ does not induce the bornological biuniverse $((\mb{R}, \tau_{S,r}, \tau_{S,l}), \mathbf{LB}(\mb{R}))$. However, the bornological biuniverse $((\mb{R}, \tau_{S,r}, \tau_{S,l}), \mathbf{LB}(\mb{R}))$ is induced by the quasi-metric $\rho_{L}$ defined as follows:
  $$ \rho_{L}(x,y)=\left\{ \begin{array}{ll}
 \min\{y-x, 1\}, & x\le y\\
 1+x-y, & x>y.\end{array}\right.$$
 
 \item[(ii)] The non-metrizable bornological biuniverse $((\mb{R}, \tau_{S,r}, \tau_{S,l}), \mc{P}(\mb{R}))$ is quasi-metrizable by the quasi-metric $\rho_{S,1}$ defined as follows:
   $$ \rho_{S,1}(x,y)=\left\{ \begin{array}{ll}
 \min\{1,y-x\}, & x\le y\\
 1, & x>y.\end{array}\right.$$
 \end{enumerate}
 \end{exam}

\begin{exam} We consider the following \textbf{hedgehog-like scheme}. Let $(X, d)$ be a quasi-metric space such that $X$ has at least two distinct points. Let $S$ be a non-empty set. We fix $x_0\in X$ and put  $Y_s= (X\setminus \{x _0\})\times\{ s\}$ for $s\in S$. Let us fix $p\notin\bigcup_{s\in S} Y_s$ and put $Y=\{p\}\cup\bigcup_{s\in S}Y_s$. Let $x, y\in X\setminus\{x_0\}$ and $s, s'\in S$. We define $\rho(p, p)=0, \rho( (x, s), p)= d(x, x_0), \rho(p, (x, s))= d(x_0, x)$ and $\rho((x, s), (y, s))=d (x, y)$. If $s\neq s'$, we put $\rho((x, s), (y, s'))=d(x, x_0)+ d(x_0, y)$. Let us consider the collection $\mc{B}$ of all sets $A\subseteq Y$ such that there are finite $S(A)\subseteq S$ such that $A\subseteq \{p\}\cup\bigcup_{s\in S(A)}Y_s$. Then $\mc{B}$ is a bornology in $Y$. If $S$ is countable, then $\mc{B}$ is second-countable. If $S$ is infinite and, simultaneously,  $x_0$ is an accumulation point of $(X, \tau(d))$, then the bornology $\mc{B}$ is not $(\tau(\rho), \tau(\rho^{-1}))$-proper. Let us denote the bornological biuniverse $((Y, \tau(\rho), \tau(\rho^{-1})), \mc{B})$ by $J(X, d, x_0, S)$ and let $Y(X, d, x_0, S)=(Y, \tau(\rho))$. We can apply $J(X, d, x_0, S)$ as follows.
\begin{enumerate}
\item[(i)] If $X=[0; 1]$ and $d(x, y)=\mid x-y\mid $ for $x, y\in X$, then the bornological universe $J(X, d, 0, \omega)$ is not quasi-metrizable although its bornology is second-countable. In this case, $Y(X, d, x_0, \omega)$ is the hedgehog space of spininess $\omega$ (cf. 4.1.5 of \cite{En}), so we can call $((Y, \tau(\rho)),\mc{B})$ \textbf{the bornological hedgehog space of spininess $\omega$}. 
\item[(ii)] If $\rho_S$ is the quasi-metric defined in Example 4.12 (i), then the bornological biuniverse $J(\mb{R}, \rho_S, 0, \omega )$ is not quasi-metrizable but its bornology has a countable base. 
\item[(iii)]Let $C$ be the unit circle in $\mb{R}^2$. We fix $x_0\in C$ and we consider the Euclidean metric $d_e$ in $C$.  The bornological biuniverse $J(C, d_e, x_0, \omega)$ is not quasi-metrizable although its bornology has a countable base. We can call $J(C, d_e, x_0, \omega)$ \textbf{the bornological metric wedge sum of circles}. In this case, the topological space $Y(C, d_e, x_0, \omega)$ is not compact.  
\item[(iv)] It is worthwhile to compare $J(C, d_e, x_0, \omega)$ with \textbf{the bornological Hawaiian earring} $(H, \mc{B}_H)$ where  $H=\bigcup_{n\in\omega\setminus\{0\}} H_n$ is considered with its natural topology inherited from $\mb{R}^2$ and, for each $n\in\omega\setminus\{0\}$, the set $H_n$ is the circle with centre $(\frac{1}{n}, 0)$ and radius $\frac{1}{n}$, while $\mc{B}_H$ is the collection of all sets $A\subseteq H$ such that there exist sets $n(A)\in\omega$ such that $A\subseteq\bigcup_{n\in n(A)\setminus\{0\}}H_n$. Then $H$ is compact and the bornology $\mc{B}_H$ has a countable base. Since there does not exist $A\in\mc{B}_H$ such that $(0,0)\in\text{int}_{d_e}A$, it follows from Theorem 4.7 that the bornological universe $(H, \mc{B}_H)$ is not quasi-metrizable. 
\end{enumerate}
\end{exam}
 
 In view of the examples above, when $d$ is a quasi-metric in $X$ and $\mc{B}$ is a bornology in $X$ but $d$ does not induce the bornological biuniverse $(X, \mc{B})=((X, \tau(d), \tau(d^{-1})), \mc{B})$,  it might be interesting to find, in terms of $d$, necessary and sufficient conditions for $(X, \mc{B})$ to be quasi-metrizable. To do this, we need the following concept:
 
 \begin{defi}
Let $d$ be a quasi-pseudometric in a set $X$ and let $\delta\in (0; +\infty)$. For a set $A\subseteq X$, \textbf{the $\delta$-neighbourhood of $A$ with respect to $d$} is  the set $[A]^{\delta}_d=\bigcup_{a\in A}B_d(a,\delta)$.
\end{defi}
 Let us notice that if $\emptyset\neq A\subseteq X$, then $[A]^{\delta}_d=\{ x\in X: d(A, x)<\delta\}$.
 
 \begin{thm} For every bornological biuniverse $((X, \tau_1, \tau_2), \mc{B})$, the following conditions are equivalent:
\begin{enumerate}
\item[(i)]  $((X, \tau_1, \tau_2), \mc{B})$ is (quasi)-metrizable;
\item[(ii)] there exists a (quasi)-metric $d$ in $X$ such that $\tau_1=\tau(d), \tau_2=\tau(d^{-1})$ and $\mc{B}$ has a base $\{ B_n: n\in\omega\}$ with the following property:
$$\forall_{n\in\omega}\exists_{\delta\in(0; +\infty)}[B_n]^{\delta}_d\subseteq B_{n+1}.$$
\end{enumerate} 
\end{thm}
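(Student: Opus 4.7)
The plan is to prove the two implications separately, using Theorem 4.7 to avoid reconstructing a (quasi)-metric from scratch in the nontrivial direction.

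For $(i)\Rightarrow(ii)$, I would take any (quasi)-metric $d$ that induces the bornological biuniverse $((X,\tau_1,\tau_2),\mc{B})$, fix a point $x_0\in X$, and set $B_n=B_d(x_0,n+1)$ for $n\in\omega$. Each $B_n$ is trivially in $\mc{B}=\mc{B}_d(X)$, and a one-line triangle-inequality argument shows both that every $d$-bounded set lies inside some $B_n$ (so $\{B_n:n\in\omega\}$ is a base for $\mc{B}$) and that $[B_n]^{1}_d\subseteq B_{n+1}$: for $y\in[B_n]^{1}_d$ pick $a\in B_n$ with $d(a,y)<1$; then $d(x_0,y)\leq d(x_0,a)+d(a,y)<n+2$. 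Thus $(ii)$ holds uniformly with $\delta=1$.

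For $(ii)\Rightarrow(i)$, the quasi-metric $d$ from $(ii)$ already witnesses bitopological (quasi)-metrizability of $(X,\tau_1,\tau_2)$ and $\{B_n\}$ is a countable base for $\mc{B}$; hence, by Theorem 4.7, it suffices to verify that $\mc{B}$ is $(\tau_1,\tau_2)$-proper. Given $A\in\mc{B}$, I would pick $n$ with $A\subseteq B_n$ and $\delta>0$ with $[B_{n+1}]^{\delta}_d\subseteq B_{n+2}$, and establish the stronger inclusion $\text{cl}_{\tau_2}B_n\subseteq\text{int}_{\tau_1}B_{n+2}$. For $x\in\text{cl}_{\tau(d^{-1})}B_n$, the characterisation of closure through $d^{-1}$-balls supplies (for this one point, hence without countable choice) a witness $y\in B_n$ with $d(y,x)<\delta/2$; then for every $z$ with $d(x,z)<\delta/2$ the triangle inequality gives $d(y,z)<\delta$, so $z\in[B_{n+1}]^{\delta}_d\subseteq B_{n+2}$, showing $B_d(x,\delta/2)\subseteq B_{n+2}$, i.e.\ $x\in\text{int}_{\tau(d)}B_{n+2}$.

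The main subtlety is precisely this last step: the $\delta$-inflation condition controls only the $\tau(d)$-topology, whereas properness forces one to absorb a $\tau(d^{-1})$-closure inside a $\tau(d)$-interior. Overcoming this requires climbing two indices up the base---once to turn a $\tau(d^{-1})$-closure point into an interior point of $B_{n+1}$ via a $d$-close witness in $B_n$, and a second time to inflate a $d$-ball around it into $B_{n+2}$. The entire argument runs in $\mathbf{ZF}$, because both the base $\{B_n\}$ and the quasi-metric $d$ are given in advance and only single-point selections are made.
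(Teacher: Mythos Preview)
Your proof is correct and follows essentially the same strategy as the paper's: build the base of $d$-balls around a fixed point for $(i)\Rightarrow(ii)$, and verify $(\tau_1,\tau_2)$-properness so as to invoke Theorem~4.7 for $(ii)\Rightarrow(i)$. The paper's argument for $(ii)\Rightarrow(i)$ is slightly sharper, though: since $[B_n]^{\delta}_d$ is itself $\tau_1$-open (being a union of $d$-balls) and already contains $\text{cl}_{\tau_2}B_n$ (via the $d^{-1}$-ball description of $\tau_2$-closure), one obtains $\text{cl}_{\tau_2}B_n\subseteq [B_n]^{\delta}_d\subseteq\text{int}_{\tau_1}B_{n+1}$ in a single step, so your two-index climb is not actually required.
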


\begin{proof} Let $((X, \tau_1, \tau_2), \mc{B})$ be a bornological biuniverse. Suppose that $(i)$ holds and that $d$ is a (quasi)-metric in $X$ such that $d$ induces $((X, \tau_1, \tau_2), \mc{B})$. We consider an arbitrary $x_0\in X$ and, for $n\in\omega$, we define $B_n=B_d(x_0, n+1)$. Since $[B_n]^{\frac{1}{2}}_d\subseteq B_{n+1}$, we infer that $(ii)$ follows from $(i)$. 

Assume that $(ii)$ is satisfied. Let $C\subseteq X$ and $D\subseteq X$ be such that, for some $\delta\in (0; +\infty)$, the inclusion $[C]^{\delta}_d\subseteq D$ holds. Let $x\in \text{cl}_{\tau_2}C$. There exists $y\in C\cap B_{d^{-1}}(x, \delta)$. Then $d(y, x)<\delta$, so $x\in [C]^{\delta}_d$. Therefore $\text{cl}_{\tau_2}C\subseteq [C]^{\delta}_d$. Of course, since $[C]^{\delta}_d\subseteq D$, we have $[C]^{\delta}_d\subseteq \text{int}_{\tau_1}D$. In consequence, $\text{cl}_{\tau_2}C\subseteq \text{int}_{\tau_1}D$. Now, we deduce from Theorem  4.7 that $(ii)$ implies $(i)$.
\end{proof}

\begin{cor} For every bornological universe $((X, \tau), \mc{B})$, the following conditions are equivalent:
\begin{enumerate}
\item[(i)]  the universe $((X, \tau), \mc{B})$ is (quasi)-metrizable;
\item[(ii)] there exists a (quasi)-metric $d$ in $X$ such that $\tau=\tau(d)$ and, simultaneously,  $\mc{B}$  has a base $\{ B_n: n\in\omega\}$ with the following property:
$$\forall_{n\in\omega}\exists_{\delta\in(0; +\infty)}[B_n]^{\delta}_d\subseteq B_{n+1};$$
\end{enumerate}
\end{cor}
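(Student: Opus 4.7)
The plan is to deduce this corollary directly from Theorem 4.14 by packaging the bornological universe $((X,\tau),\mc{B})$ inside the biuniverse $((X,\tau(d),\tau(d^{-1})),\mc{B})$ canonically associated with whichever (quasi)-metric $d$ is in play.

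For $(i)\Rightarrow(ii)$, given any (quasi)-metric $d$ inducing $((X,\tau),\mc{B})$ I would fix a point $x_0\in X$ and set $B_n=B_d(x_0,n+1)$ for each $n\in\omega$. A single application of the triangle inequality shows both that $\{B_n\}_{n\in\omega}$ is a base for $\mc{B}=\mc{B}_d(X)$ (because every $d$-ball $B_d(x,r)$ is contained in $B_d(x_0,d(x_0,x)+r)$, hence in some $B_n$) and that $[B_n]_d^{1/2}\subseteq B_{n+1}$, so condition (ii) holds with $\delta=\tfrac{1}{2}$.

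For $(ii)\Rightarrow(i)$, the point is that, given $d$ as in (ii), the hypothesis is literally clause (ii) of Theorem 4.14 read inside the biuniverse $((X,\tau(d),\tau(d^{-1})),\mc{B})$, whose underlying bitopological space is (quasi)-metrizable by $d$. Theorem 4.14 then produces a (quasi)-metric $d'$ with $\tau(d')=\tau(d)=\tau$, $\tau(d'^{-1})=\tau(d^{-1})$ and $\mc{B}=\mc{B}_{d'}(X)$, which immediately yields quasi-metrizability of $((X,\tau),\mc{B})$. In the metric case $d$ is a metric, so $\tau(d^{-1})=\tau(d)=\tau$, the biuniverse has the form $((X,\tau,\tau),\mc{B})$, and the metric form of Theorem 4.14 returns a genuine metric $d'$, giving metrizability of $((X,\tau),\mc{B})$ in Hu's sense; equivalently, one could invoke Corollary 4.10 to convert the quasi-metrizable biuniverse into a metrizable universe.

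I do not anticipate any substantial obstacle: the whole argument is essentially a translation of Theorem 4.14 from the bitopological to the topological setting. The only care needed is administrative, namely to keep the metric/quasi-metric dichotomy aligned through the reduction and to verify that the enlargement condition on the base $\{B_n\}$ transfers verbatim between the bornological universe and its associated biuniverse.
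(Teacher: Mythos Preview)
Your argument is correct and is exactly the intended one: Corollary 4.16 is stated in the paper without proof precisely because it follows from Theorem 4.15 by passing to the biuniverse $((X,\tau(d),\tau(d^{-1})),\mc{B})$, just as you do. The only slip is a numbering error: what you call ``Theorem 4.14'' is Definition 4.14 in the paper; the result you are invoking is Theorem 4.15.
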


The following example shows that the sets $B_n$ can be $d$-unbounded in Theorem 4.15 and Corollary 4.16.

\begin{exam} For the bornological biuniverse $((\mb{R}, \tau_{S,r}, \tau_{S,l}), \mathbf{LB}(\mb{R}))$ and for the quasi-metric $\rho_S$
  from Example 4.12, the sets $B_n=[-n; +\infty)$ with $n\in\omega$ satisfy condition $(ii)$ of Theorem 4.15 if we put $d=\rho_{S}$ and $\delta=1$. However, the sets $B_n$ are all $\rho_S$-unbounded.  
\end{exam}
 
We offer a number of other relevant examples in Section 10. 

\section{The kernel of a boundedness}

If $X$ is a topological space and $\mc{B}$ is a boundedness in $X$, a notion of a kernel of the universe $(X, \mc{B})$ was  introduced in Definition 6.3 in \cite{Hu}. We adapt this notion to our needs.
 
\begin{defi} Let $\tau$ be a topology in a set $X$. If $\mc{B}$ is a boundedness in $X$, then the $\tau$\textbf{-kernel} of $\mc{B}$ is the set $${\Lambda}_{\tau}(\mc{B})=\bigcup\{ \text{int}_{\tau}A: A\in\mc{B}\}.$$
\end{defi}

\begin{defi}
Let $ (X, \tau_1, \tau_2)$ be a bitopological space. Suppose that $\mc{B}$ is a boundedness in $X$ and put $\Lambda= {\Lambda}_{\tau_1}(\mc{B})$. Let  $\mc{B}_{\Lambda}=\{ A\cap\Lambda: A\in \mc{B}\}$. Then the ordered pair $((\Lambda, \tau_1\mid _{\Lambda}, \tau_2\mid _{\Lambda}), \mc{B}_{\Lambda})$ will be called \textbf{the bornological biuniverse induced by} $\mc{B}$. 
\end{defi}

\begin{f} Suppose that $ (X, \tau_1, \tau_2)$ is a bitopological space. If $\mc{B}$ is a $(\tau_1, \tau_2)$-proper boundedness in $X$ and if $\Lambda= {\Lambda}_{\tau_1}(\mc{B})$, then the bornology $\mc{B}_{\Lambda}$ in $\Lambda$ is $(\tau_1\mid _{\Lambda}, \tau_2\mid _{\Lambda})$-proper.
\end{f}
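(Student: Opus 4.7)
The plan is to unwind the definitions of the kernel $\Lambda$, the induced bornology $\mc{B}_{\Lambda}$, and the subspace bitopology, then transfer the $(\tau_1,\tau_2)$-properness of $\mc{B}$ in $X$ directly to $(\tau_1|_\Lambda,\tau_2|_\Lambda)$-properness of $\mc{B}_\Lambda$ in $\Lambda$. The argument is essentially a bookkeeping exercise, but one must be a little careful with subspace closures and interiors.

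First I would verify quickly that $\mc{B}_\Lambda$ is indeed a bornology, not merely a boundedness: given $x\in\Lambda$, by the definition of the kernel there exists $A\in\mc{B}$ with $x\in\text{int}_{\tau_1}A\subseteq A$, so $\{x\}\in\mc{B}$ by downward closure of the ideal, and therefore $\{x\}=\{x\}\cap\Lambda\in\mc{B}_\Lambda$.

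Next, to establish properness, I would start from an arbitrary $C\in\mc{B}_\Lambda$, written as $C=A\cap\Lambda$ for some $A\in\mc{B}$. Applying $(\tau_1,\tau_2)$-properness of $\mc{B}$ to $A$, I obtain $B\in\mc{B}$ with $\text{cl}_{\tau_2}A\subseteq\text{int}_{\tau_1}B$. Set $D=B\cap\Lambda\in\mc{B}_\Lambda$. The standard subspace identities give $\text{cl}_{\tau_2|_\Lambda}C=\Lambda\cap\text{cl}_{\tau_2}C\subseteq\Lambda\cap\text{cl}_{\tau_2}A\subseteq\Lambda\cap\text{int}_{\tau_1}B$. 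Since $\Lambda\cap\text{int}_{\tau_1}B$ is $\tau_1|_\Lambda$-open and contained in $D$, it lies inside $\text{int}_{\tau_1|_\Lambda}D$. Chaining the inclusions yields $\text{cl}_{\tau_2|_\Lambda}C\subseteq\text{int}_{\tau_1|_\Lambda}D$, which is exactly the condition needed.

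There is no genuine obstacle: the only place where one could slip up is in the computation of the subspace closure and interior. In particular, one needs that the $\tau_1|_\Lambda$-interior of $B\cap\Lambda$ contains every $\tau_1$-open subset of $B$ intersected with $\Lambda$, which is a direct consequence of how the subspace topology is defined; no choice principle or nontrivial fact from the earlier sections is required.
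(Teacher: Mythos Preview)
Your proof is correct. The paper states this result as a Fact without proof, so there is no argument to compare against; your direct verification via subspace closure and interior identities is exactly the routine check the authors evidently had in mind.
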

 
For a topological space $X=(X, \tau)$ and a boundedness $\mc{B}$ in $X$, when $\Lambda= {\Lambda}_{\tau}(\mc{B})$,  Theorem 13.5 of \cite{Hu} concerns the problem of the metrizability of the bornological universe $(\Lambda, \mc{B}_{\Lambda})$ under the assumption that $\Lambda$ is a separable metrizable subspace of $X$. However, the proof to Theorem 13.5 in \cite{Hu} is not in $\mathbf{ZF}$. We give a generalization to bornological universes of Theorem 13.5 of \cite{Hu} and show its proof in $\mathbf{ZF}$. We also show that the assumption of separability is needless in Theorem 13.5 of \cite{Hu}.

\begin{thm} Assume that $(X, \tau_1, \tau_2)$ is a bitopological space and that $\mc{B}$ is a second-countable  $(\tau_1, \tau_2)$-proper boundedness in $X$. Let $\Lambda$ be the $\tau_1$-kernel of $\mc{B}$ and suppose that the bitopological space $(\Lambda, \tau_1\mid _{\Lambda}, \tau_2\mid _{\Lambda})$ is quasi-metrizable. Then there exists a quasi-metric $\rho$ on $\Lambda$ such that the following conditions are satisfied:
\begin{enumerate}
\item[(i)] $\tau_1\mid_{\Lambda}=\tau(\rho)$ and $\tau_2\mid_{\Lambda}=\tau(\rho^{-1})$;
\item[(ii)] $\mc{B}$ is the collection of all $\rho$-bounded subsets of $\Lambda$;
\item[(iii)] for each pair of points $x_0\in\Lambda$, $x_{\ast}\in X\setminus\Lambda$ and for each positive real number $b$, there exists $G\in\tau_2$ such that $x_{\ast}\in G$ and $\rho(x_0, x)>b$ whenever $x\in G\cap\Lambda$.
\end{enumerate} 
\end{thm}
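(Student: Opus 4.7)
The plan is to reduce the problem to Theorem 4.7 applied inside $\Lambda$, and then exploit the specific construction of the characteristic function from that proof to secure condition (iii). First I note that since $\mc{B}$ is $(\tau_1,\tau_2)$-proper, every $A\in\mc{B}$ admits $B\in\mc{B}$ with $A\subseteq\text{cl}_{\tau_2}A\subseteq\text{int}_{\tau_1}B\subseteq\Lambda$, so $\mc{B}\subseteq\mc{P}(\Lambda)$ and $\mc{B}_\Lambda=\mc{B}$. Moreover every point of $\Lambda$ lies in $\text{int}_{\tau_1}A$ for some $A\in\mc{B}$, so all singletons of $\Lambda$ belong to $\mc{B}$ and $\mc{B}$ is a bornology on $\Lambda$; by Fact 5.3 it is $(\tau_1|_\Lambda,\tau_2|_\Lambda)$-proper, and it is second-countable by hypothesis. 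Hence $((\Lambda,\tau_1|_\Lambda,\tau_2|_\Lambda),\mc{B})$ satisfies the hypotheses of Theorem 4.7.

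If $\Lambda\in\mc{B}$, then $(\tau_1,\tau_2)$-propriety forces $\text{cl}_{\tau_2}\Lambda\subseteq\Lambda$, so $\Lambda$ is $\tau_2$-closed in $X$, while $\mc{B}=\mc{P}(\Lambda)$ since $\mc{B}$ is an ideal containing $\Lambda$. Fixing any quasi-metric $\sigma$ inducing $(\Lambda,\tau_1|_\Lambda,\tau_2|_\Lambda)$ and setting $\rho(x,y)=\min\{\sigma(x,y),1\}$ then gives a bounded quasi-metric fulfilling (i) and (ii), and (iii) is vacuous: take $G=X\setminus\Lambda\in\tau_2$.

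Assume from now on $\Lambda\notin\mc{B}$. Then $\mc{B}$ has no $\subseteq$-maximum, for a maximum $M$ would force $\Lambda\subseteq\bigcup\mc{B}\subseteq M\in\mc{B}$. Proposition 3.5 therefore produces a strictly increasing base $\{A_n:n\in\omega\}$ of $\tau_1$-open members of $\mc{B}$ with $A_0=\emptyset$ and $\text{cl}_{\tau_2}A_n\subsetneq A_{n+1}=\text{int}_{\tau_1}A_{n+1}$ for each $n$. Running the Urysohn-type construction from the proof of Theorem 4.7 inside $\Lambda$, I obtain a bicontinuous function $\psi:(\Lambda,\tau_1|_\Lambda,\tau_2|_\Lambda)\to([0,+\infty),u,l)$ satisfying $\psi(x)\in[n,n+1]$ whenever $x\in\text{int}_{\tau_1}A_{n+1}\setminus\text{int}_{\tau_1}A_n$, with $\mc{B}=\{A\subseteq\Lambda:\sup\psi(A)<+\infty\}$. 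Fixing a quasi-metric $d\leq 1$ on $\Lambda$ such that $\tau_1|_\Lambda=\tau(d)$ and $\tau_2|_\Lambda=\tau(d^{-1})$, I set $\rho(x,y)=d(x,y)+\max\{\psi(y)-\psi(x),0\}$; exactly the calculation in Theorem 4.7 establishes (i) and (ii).

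The genuine content, and the main obstacle, is condition (iii). Given $x_0\in\Lambda$, $x_\ast\in X\setminus\Lambda$ and $b>0$, I pick $n\in\omega$ with $n>b+\psi(x_0)$. Since $\text{cl}_{\tau_2}A_n\subseteq\text{int}_{\tau_1}A_{n+1}\subseteq\Lambda$ and $x_\ast\notin\Lambda$, there exists $G\in\tau_2$ with $x_\ast\in G$ and $G\cap\text{cl}_{\tau_2}A_n=\emptyset$. Every $x\in G\cap\Lambda$ then satisfies $x\notin\text{int}_{\tau_1}A_n$, so $x\in\text{int}_{\tau_1}A_{m+1}\setminus\text{int}_{\tau_1}A_m$ for some $m\geq n$, whence $\psi(x)\geq m\geq n$ and therefore $\rho(x_0,x)\geq\psi(x)-\psi(x_0)\geq n-\psi(x_0)>b$. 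The point of delicacy is that Theorem 4.7 taken on its own offers no control at all over points outside $\Lambda$; what rescues the argument is the fact that the sets $\text{cl}_{\tau_2}A_n$ furnished by $(\tau_1,\tau_2)$-propriety of $\mc{B}$ in the ambient space $X$ (and not merely in $\Lambda$) are all contained in $\Lambda$, which is exactly what allows $x_\ast$ to be $\tau_2$-separated from each $A_n$ and forces $\psi$, and thus $\rho(x_0,\cdot)$, to diverge along any $\tau_2$-approach of $x_\ast$ from within $\Lambda$.
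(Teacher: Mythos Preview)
Your proof is correct. The reduction to Theorem 4.7 on $\Lambda$ via Fact 5.3 matches the paper exactly, and your separate treatment of the case $\Lambda\in\mc{B}$ is fine (the paper absorbs it into Theorem 4.7 without comment, but your explicit handling is cleaner).

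Where you diverge from the paper is in the verification of (iii). You go back into the machinery of Theorem 4.7, tracking the specific characteristic function $\psi$ and the particular quasi-metric $\rho(x,y)=d(x,y)+\max\{\psi(y)-\psi(x),0\}$, and then argue via the level structure of the $A_n$'s (closed in the ambient $X$) that $\psi$ blows up along any $\tau_2$-approach to $x_\ast$. The paper instead gives a black-box argument that works for \emph{any} quasi-metric $\rho$ satisfying (i) and (ii): the set $B=\{x\in\Lambda:\rho(x_0,x)\le b\}$ is $\rho$-bounded, hence in $\mc{B}$; by $(\tau_1,\tau_2)$-propriety of $\mc{B}$ in $X$ one finds $U\in\tau_1\cap\mc{B}$ with $B\subseteq U$ and $\text{cl}_{\tau_2}U\subseteq\Lambda$, and then $G=X\setminus\text{cl}_{\tau_2}U$ does the job. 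Your route has the virtue of making explicit why the ambient propriety (not just propriety in $\Lambda$) is essential; the paper's route has the virtue of showing that (iii) is automatic from (i)--(ii) and does not depend on which $\rho$ one chose.
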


\begin{proof} 
Since $\mc{B}$ is $(\tau_1, \tau_2)$-proper, we have $\mc{B}=\mc{B}_{\Lambda}$. In view of Fact 5.3, $\mc{B}$ is $(\tau_1\mid_{\Lambda}, \tau_2\mid_{\Lambda})$-proper. In the light of Theorem 4.7, there exists in $\mathbf{ZF}$ a quasi-metric $\rho$ in $\Lambda$ such that both conditions $(i)$ and $(ii)$ are satisfied. Let $x_0\in\Lambda$ and $x_{\ast}\in X\setminus\Lambda$. Consider an arbitrary positive real number $b$. Put $B=\{ x\in \Lambda: \rho(x_0, x)\leq b\}$. Of course, $B\in\mc{B}$. Since $\mc{B}$ is $(\tau_1, \tau_2)$-proper, there exists $U\in\tau_1$ such that $B\subseteq U$. Using the assumption that $\mc{B}$ is $(\tau_1, \tau_2)$-proper once again, we deduce that $\text{cl}_{\tau_2}U\subseteq\Lambda$. Let $G=X\setminus\text{cl}_{\tau_2}U$. Then $G\in\tau_2$, $G\cap\Lambda\subseteq\Lambda\setminus B$ and $x_{\ast}\in G$. It is evident that if $x\in G\cap\Lambda$, then $\rho(x_0, x)>b$.  
\end{proof}
 Now, we can immediately deduce in $\mathbf{ZF}$ the following improvement of Theorem 13.5 of \cite{Hu}:
\begin{cor} 
If $\mc{B}$ is a second-countable proper boundedness in a topological space $X$ such that the set $\Lambda=\bigcup\mc{B}$ is a metrizable subspace of $X$, then there exists a metric $\rho$ on $\Lambda$ such that the following conditions are satisfied:
\begin{enumerate}
\item[(i)] the topology of $\Lambda$ as a subspace of $X$ is induced by $\rho$;
\item[(ii)]  $\mc{B}=\{ A\subseteq \Lambda: \text{diam}_{\rho}(A)<+\infty\}$;
\item[(iii)]  for each pair of points $x_0\in\Lambda$, $x_{\ast}\in X\setminus\Lambda$ and for each positive real number $b$, there exists an open set $G$ in $X$ such that $x_{\ast}\in G$ and $\rho (x_0, x)>b$ whenever $x\in G\cap\Lambda$. 
\end{enumerate}
\end{cor}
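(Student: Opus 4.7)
The plan is to reduce the statement to Theorem 5.4 applied to the bitopological space $(X,\tau,\tau)$, where $\tau$ denotes the topology on $X$. The first step is to identify the $\tau$-kernel of $\mc{B}$ with $\bigcup\mc{B}$. The inclusion $\Lambda_{\tau}(\mc{B})\subseteq\bigcup\mc{B}$ is immediate since $\text{int}_{\tau}A\subseteq A$. Conversely, if $A\in\mc{B}$, $\tau$-propriety produces $B\in\mc{B}$ with $A\subseteq\text{cl}_{\tau}A\subseteq\text{int}_{\tau}B\subseteq\Lambda_{\tau}(\mc{B})$, giving $\bigcup\mc{B}\subseteq\Lambda_{\tau}(\mc{B})$. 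So $\Lambda=\bigcup\mc{B}$ is exactly the $\tau$-kernel of $\mc{B}$.

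Since $(X,\tau)$ is metrizable, so is the subspace $(\Lambda,\tau\mid_{\Lambda})$. Applying Theorem 5.4 to $((X,\tau,\tau),\mc{B})$ yields a quasi-metric $\rho$ on $\Lambda$ fulfilling the three conclusions of that theorem. The critical observation is that the proof of Theorem 5.4 invokes Theorem 4.7, whose final sentence explicitly records that when the ambient bitopological structure arises from a \emph{metric} $\sigma$ (as is the case here, because $\tau\mid_{\Lambda}$ is metrizable and $\tau_1=\tau_2$), the construction $\rho(x,y)=d(x,y)+|f(y)-f(x)|$ produces a metric rather than merely a quasi-metric. Hence we may take $\rho$ to be a metric on $\Lambda$ with $\tau\mid_{\Lambda}=\tau(\rho)$, which is conclusion (i).

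For conclusion (ii), I would use that a genuine metric $\rho$ satisfies: $A\subseteq\Lambda$ is $\rho$-bounded if and only if $\text{diam}_{\rho}(A)<+\infty$. The forward direction uses the triangle inequality ($\text{diam}_{\rho}(B_{\rho}(x,r))\leq 2r$), while the converse places a nonempty $A$ inside the open ball of radius $\text{diam}_{\rho}(A)+1$ about any of its points. Coupled with Theorem 5.4(ii), this identifies $\mc{B}$ with $\{A\subseteq\Lambda:\text{diam}_{\rho}(A)<+\infty\}$. Conclusion (iii) is then a direct transcription of Theorem 5.4(iii) under $\tau_1=\tau_2=\tau$: a $\tau_2$-open neighbourhood of $x_{\ast}$ in that statement is simply an open subset of $(X,\tau)$.

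The main obstacle I anticipate is precisely the passage from \emph{quasi}-metric to metric: Theorem 5.4 is phrased quasi-metrically, whereas the corollary demands a genuine metric. I expect no new difficulty beyond checking, by inspection of the construction embedded in the proof of Theorem 4.7, that the symmetry $\tau_1=\tau_2$ is inherited by $\rho$ whenever one begins with a metric $\sigma$; this has already been flagged at the end of that proof, so the argument closes without any additional machinery.
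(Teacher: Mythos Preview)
Your approach is correct and is essentially the paper's own: the corollary is recorded there as an immediate consequence of Theorem~5.4 (read in the metric case via the parenthetical ``(quasi)'' convention of Theorem~4.7), together with the identification $\Lambda_{\tau}(\mc{B})=\bigcup\mc{B}$ for a proper boundedness. One small slip to fix: you write ``Since $(X,\tau)$ is metrizable, so is the subspace $(\Lambda,\tau\mid_{\Lambda})$,'' but the hypothesis of the corollary does \emph{not} say that $X$ is metrizable---only that $\Lambda$ is; fortunately that is precisely the conclusion you need, so the error is harmless and the argument goes through unchanged once you quote the hypothesis directly.
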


\section{Uniformly quasi-metrizable bornologies}

This section has been inspired by the necessary and sufficient conditions for a bornology to be uniformly metrizable given in \cite{GM}. We adapt the conditions of Theorem 2.4 of \cite{GM} to bornologies in quasi-metric spaces.

For $x, y\in \mathbb{R}$, let us put $$\rho_{u}(x, y)=\max\{y-x, 0\},  \rho_{l}(x, y)=\max\{x-y,0\}.$$ Then $\rho_u, \rho_l$ are quasi-pseudometrics in $\mathbb{R}$ such that $\rho_{u}^{-1}=\rho_{l}$; moreover, $\tau(\rho_u)$  is the upper topology $u$ in $\mathbb{R}$, while $\tau(\rho_l)$ is the lower topology $l$ in $\mathbb{R}$.

\begin{defi} Let $d_X, d_Y$ be quasi-pseudometrics in sets $X$ and $Y$, respectively. We say that a mapping $f: X\to Y$ is \textbf{$(d_X, d_Y)$-uniformly continuous} if the following condition is satisfied:
 $$\forall_{\epsilon\in (0; +\infty)}\exists _{\delta\in (0;+\infty)}\forall_{x_1, x_2\in X}[d_X(x_1, x_2)<\delta\Rightarrow d_Y(f(x_1), f(x_2))<\epsilon].$$
\end{defi}
 
 \begin{defi}
 Quasi-pseudometrics $d_0, d_1$ in a set $X$ are called \textbf{uniformly equivalent} if the following condition holds:
 $$\forall_{\epsilon\in (0; +\infty)}\exists_{\delta_0, \delta_1\in (0;+\infty)}\forall_{x, y\in X}\forall_{i\in\{0, 1\}}[d_i(x, y)<\delta_i\Rightarrow d_{1-i}(x, y)<\epsilon].$$
 \end{defi}
 
\begin{defi}
Suppose that $(X, d)$ is a quasi-metric space and that $\mc{B}$ is a bornology in $X$. We say that $\mc{B}$ is \textbf{uniformly quasi-metrizable with respect to $d$} if there exists a quasi-metric $\rho$ in $X$ such that $d$ and $\rho$ are uniformly equivalent, while $\mc{B}$ is the collection of all $\rho$-bounded sets.
\end{defi}

\begin{defi}
 We say that quasi-metrics $d, \rho$ in $X$ are \textbf{uniformly locally identical} if they are uniformly equivalent and there exists $\delta\in (0; +\infty)$ such that, for all $x, y\in X$, we have $\rho(x, y)=d(x, y)$ whenever $d(x, y)<\delta$ (cf. \cite{WJ} and Remark 2.5 of \cite{GM}).
\end{defi}

\begin{thm} 
Suppose that $(X, d)$ is a quasi-metric space and that $\mc{B}$ is a bornology in $X$. Then the following conditions are all equivalent:
\begin{enumerate}
\item[(i)] $\mc{B}$ is uniformly quasi-metrizable with respect to $d$;
\item[(ii)] $\mc{B}$ has a base $\{ B_n: n\in\omega\}$ such that, for some $\delta\in (0; +\infty)$ and for each $n\in\omega$, the inclusion $[B_n]^{\delta}_d\subseteq B_{n+1}$ holds;
\item[(iii)] there exists a quasi-metric $\rho$ in $X$ such that $d, \rho$ are uniformly locally identical and $\mc{B}$ is the collection of all $\rho$-bounded sets.
\item[(iv)] there exists a $(d, \rho_u)$-uniformly continuous $(\tau(d), \tau(d^{-1}))$-characteristic function for $\mc{B}$;
\end{enumerate}
\end{thm}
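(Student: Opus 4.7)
The plan is to establish the cycle (i) $\Rightarrow$ (ii) $\Rightarrow$ (iv) $\Rightarrow$ (i) showing that (i), (ii), (iv) are pairwise equivalent, and to obtain (iii) $\Leftrightarrow$ (i) separately. For (i) $\Rightarrow$ (ii) I would fix an inducing quasi-metric $\rho$ uniformly equivalent to $d$ with $\mc{B}_\rho(X) = \mc{B}$ and a point $x_0 \in X$; the $\rho$-balls $B_n = \{x \in X : \rho(x_0, x) < n + 1\}$ form a base of $\mc{B}$ satisfying $[B_n]^{1/2}_\rho \subseteq B_{n+1}$ by the triangle inequality for $\rho$, and choosing $\delta > 0$ with $d(x, y) < \delta \Rightarrow \rho(x, y) < 1/2$ yields $[B_n]^\delta_d \subseteq [B_n]^{1/2}_\rho \subseteq B_{n+1}$.

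The heart of the matter is (ii) $\Rightarrow$ (iv). Starting from the hypothesized base $\{B_n\}$ (which we may assume to be non-decreasing by replacing $B_n$ with $\bigcup_{k \leq n} B_k$), I would pass to the $\tau(d)$-open enlargements $U_n = [B_n]^{\delta/3}_d$; each $U_n$ lies in $\mc{B}$ (since $U_n \subseteq [B_n]^\delta_d \subseteq B_{n+1}$), the family is still a base for $\mc{B}$, and the inclusion $[U_n]^{\delta/3}_d \subseteq U_{n+1}$ follows from two applications of the triangle inequality. A short chase of distances back to $B_n$, using $[B_n]^\delta_d \subseteq B_{n+1}$, yields the saturation $x \notin U_{n+1} \Rightarrow d(\text{cl}_{\tau(d^{-1})}(U_n), x) \geq \delta/3$. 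Setting
$$h_n(x) = \min\{1,\, 3\, d(\text{cl}_{\tau(d^{-1})}(U_n),\, x)/\delta\},$$
and using that $d(C, \cdot)$ is $(\tau(d), \tau(d^{-1}))$-bicontinuous into $([0, +\infty), u, l)$ and satisfies $d(C, y) - d(C, x) \leq d(x, y)$ for every set $C$, each $h_n$ is bicontinuous into $([0, 1], u, l)$ and $(d, \rho_u)$-Lipschitz with constant $3/\delta$; saturation forces $h_n = 0$ on $U_n$ and $h_n = 1$ on $X \setminus U_{n+1}$. Put $\psi = \sum_{n \in \omega} h_n$. For $m = \min\{n : x \in U_n\}$ (well-defined because $\{x\} \in \mc{B}$) the saturation forces $h_n(x) = 1$ for $n \leq m - 2$ and $h_n(x) = 0$ for $n \geq m$, so $\psi(x) = (m - 1) + h_{m - 1}(x) \in [m - 1, m]$ and $\mc{B} = \{A \subseteq X : \sup \psi(A) < +\infty\}$. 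Bicontinuity of $\psi$ follows from its being the pointwise supremum of the non-decreasing sequence of $\tau(d^{-1})$-lower-semicontinuous partial sums and its coinciding with the finite $\tau(d)$-upper-semicontinuous sum $\sum_{n < m} h_n$ on the $\tau(d)$-open neighborhood $U_m$ of $x$. For $(d, \rho_u)$-uniform continuity the crucial point is that $d(x, y) < \delta/3$ forces $y \in [U_m]^{\delta/3}_d \subseteq U_{m+1}$: in $\psi(y) - \psi(x) = \sum_n (h_n(y) - h_n(x))$ the terms with $n \leq m - 2$ contribute at most $0$ (since $h_n(x) = 1 \geq h_n(y)$) and those with $n \geq m + 1$ vanish (both $x$ and $y$ lie in $U_n$), while the two remaining differences are each at most $3 d(x, y)/\delta$ by the Lipschitz bound; hence $\psi(y) - \psi(x) \leq 6 d(x, y)/\delta$, and for $\epsilon > 0$ the modulus $\eta = \min\{\delta/3, \epsilon \delta/6\}$ suffices.

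For (iv) $\Rightarrow$ (i), set $\bar d(x, y) = \min\{d(x, y), 1\}$ and $\rho(x, y) = \bar d(x, y) + (f(y) - f(x))^+$; arguing as in the proof of Theorem 4.7, $\rho$ is a quasi-metric inducing $(\tau(d), \tau(d^{-1}))$ with $\mc{B}_\rho(X) = \mc{B}$, and the $(d, \rho_u)$-uniform continuity of $f$ delivers uniform equivalence of $\rho$ and $d$. As for (iii), the implication (iii) $\Rightarrow$ (i) is immediate from the definitions; the reverse follows from the truncation trick of Remark 2.5 of \cite{GM} (cf.~\cite{WJ}) which, starting from any $\rho$ uniformly equivalent to $d$ with $\mc{B}_\rho(X) = \mc{B}$, produces a modified quasi-metric that agrees with $d$ on all pairs with $d(x, y) < \delta_0$ for a suitable $\delta_0$, while preserving both uniform equivalence and the bornology. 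I expect the principal obstacle to be the uniform-continuity verification in (ii) $\Rightarrow$ (iv): the naive choice $\sum_n \min\{1, d(B_n, \cdot)/\delta\}$ has arbitrarily many active terms under perturbation and is not uniformly continuous, so the saturation argument---which forces $h_n(x) = 1$ for $n \leq m - 2$ and leaves at most two nontrivial contributions to $\psi(y) - \psi(x)$---is essential.
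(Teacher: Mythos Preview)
Your proof is correct and follows essentially the same route as the paper. The paper runs the cycle (i)$\Rightarrow$(ii)$\Rightarrow$(iii)$\Rightarrow$(iv)$\Rightarrow$(i); your cycle (i)$\Rightarrow$(ii)$\Rightarrow$(iv)$\Rightarrow$(i) with (iii) handled separately is only a reordering, and your characteristic function $\psi=\sum_n h_n$ is the same object as the paper's piecewise-defined $\chi(x)=n-2+\phi_{n-1}(x)$ for $x\in B_n\setminus B_{n-1}$ (indeed the paper's $\phi_k$ satisfy $\phi_k(x)=1$ for $k\le n-2$ and $\phi_k(x)=0$ for $k\ge n$, so $\chi=\sum_k\phi_k-1$). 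Your detour through the open enlargements $U_n=[B_n]^{\delta/3}_d$ and distances to $\text{cl}_{\tau(d^{-1})}(U_n)$ is an inessential variant of the paper's direct use of $d(B_n,\cdot)$, since $d(\text{cl}_{\tau(d^{-1})}A,\cdot)=d(A,\cdot)$ for any $A$. One small point to tighten: your passage to (iii), phrased as a ``truncation trick'' applied directly to an arbitrary uniformly equivalent $\rho$, is not quite what Remark~2.5 of \cite{GM} (as used in the paper) provides---that device needs a $(d,\rho_u)$-\emph{Lipschitz} characteristic function to force the max-formula to agree with $d$ for small $d$-distances, and $x\mapsto\rho(x_0,x)$ is only uniformly continuous in $d$, not Lipschitz. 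Since your $\psi$ \emph{is} Lipschitz with constant $6/\delta$, the formula $\rho'(x,y)=\max\{\min\{d(x,y),1\},\,(\delta/12)(\psi(y)-\psi(x))^+\}$ yields a quasi-metric equal to $d$ whenever $d(x,y)<1$; so route (i)$\Rightarrow$(iii) through your already-established (iv) rather than through an unspecified modification of $\rho$.
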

\begin{proof} 
Assume $(i)$ and suppose that $\rho$ is a uniformly equivalent with $d$ quasi-metric in $X$ such that $\mc{B}$ is the collection of all $\rho$-bounded sets. Let $x_0\in X$ and, for $n\in\omega $, let $B_n=B_{\rho}(x_0, n+1)$.  We choose  $\delta\in (0; +\infty)$  such that $\rho( x, y)<\frac{1}{2}$ whenever $d(x, y)<\delta$. Then $[B_n]^{\delta}_d\subseteq [B_n]^{\frac{1}{2}}_{\rho}\subseteq B_{n+1}$ for each $n\in\omega$, so $(ii)$ follows from $(i)$. 

Now, let us suppose that $(ii)$ holds. We may assume that $\delta\in (0; +\infty)$ and that $\{ B_n: n\in\omega\}$ is a base for $\mc{B}$ such that $B_0=\emptyset, B_1\neq\emptyset$ and $[B_n]^{\delta}_d\subseteq B_{n+1}$ for each $n\in\omega$. We shall mimic the proof to Proposition 2.2 in \cite{GM} and change parts of it to show that $(iii)$ follows from $(ii)$. We define $\phi_0(x)=1$ for each $x\in X$. If $n\in\omega\setminus\{0\}$, we define $\phi_n(x)=\min\{1, \frac{1}{\delta}d(B_n, x)\}$ for each $x\in X$. It is easy to check that the function $\phi_n$ is $(d, \rho_u)$-uniformly continuous; moreover, $\phi_n(B_n)\subseteq \{0\}$ and $\phi_n(X\setminus B_{n+1})\subseteq\{1\}$. Let us consider the function $\chi: X\to [0; +\infty)$ defined by
$$ \chi(x)=n-2+\phi_{n-1}(x)$$
for each $x\in B_n\setminus B_{n-1}$ and for each $n\in\omega\setminus\{0\}$. To prove that $\chi$ is $(d, \rho_u)$-uniformly continuous, let us consider an arbitrary pair $x, y$ of points of $X$ such that $d(x, y)<\delta$. Let $n\in\omega$ be the unique natural number such that $x\in B_n\setminus B_{n-1}$. If $z\in X\setminus B_{n+1}$, then $d(x, z)\ge\delta$. This implies that $y\in B_{n+1}$. Let $m\in\omega\setminus\{0\}$ be the unique natural number such that $y\in B_m\setminus B_{m-1}$. Then $m\leq n+1$. We have $\chi(y)-\chi(x)=m-n+\phi_{m-1}(y)-\phi_{n-1}(x)$. If $m=n+1$, then $\chi(y)-\chi(x)=1+\phi_{n}(y)-\phi_{n-1}(x)=\phi_n(y)-\phi_n(x)+\phi_{n-1}(y)-\phi_{n-1}(x)\leq\frac{2}{\delta}d(x, y)$. If $m=n$, then $\chi(y)-\chi(x)=\phi_{n-1}(y)-\phi_{n-1}(x)\leq\frac{1}{\delta}d(x, y)$. Suppose that $m<n$. Then $m-n+1\leq 0, x\notin B_m, y\in B_{n-1}$ and $\chi(y)-\chi(x)=m-n+1+\phi_{m-1}(y)-\phi_{m-1}(x)+\phi_{n-1}(y)-\phi_{n-1}(x)\leq \frac{2}{\delta}d(x, y)$. In consequence, $\chi$ is $(d, \rho_u)$-uniformly continuous. Therefore, $\chi: (X, \tau(d), \tau(d^{-1}))\to (\mathbb{R}, u, l)$ is bicontinuous. Since, for $A\subseteq X$, we have that $A\in\mc{B}$ if and only if $\chi$ is bounded on $A$, the function $\chi$ is a $(\tau(d), \tau(d^{-1}))$-characteristic function for $\mc{B}$.  In much the same way as in Remark 2.5 of \cite{GM}, we can define $\rho(x, y)=\max\{ \min\{ d(x, y), 1\}, \frac{\delta}{2}\max\{\chi(y)-\chi(x), 0\}\}$ to get a quasi-metric $\rho$ uniformly locally identical with $d$ such that $\mc{B}$ is the collection of all $\rho$-bounded sets.  Thus $(ii)$ implies $(iii)$.

Let us assume that $(iii)$ holds. We take a quasi-metric  $\rho$ in $X$ such that $d$ and $\rho$ are uniformly locally identical and $\mc{B}=\mc{B}_{\rho}(X)$.  We fix $x_0\in X$ and define $f(x)=\rho(x_0, x)$ for $x\in X$ to get a $(\tau(\rho), \tau(\rho^{-1}))$-characteristic function $f$ for $\mc{B}$ such that $f$ is $(d, \rho_u)$-uniformly continuous. Hence $(iii)$ implies $(iv)$. 

Finally, we suppose that $(iv)$ holds and we consider an arbitrary function $g$ such that $g$ is a $(\tau(d), \tau(d^{-1}))$-characteristic function for $\mc{B}$ and  $g$ is $(d, \rho_u)$-uniformly continuous. For $x, y\in X$, we can define $d_g(x, y)=\min\{ d(x, y), 1\}+\max\{ g(y)-g(x), 0\}$ to see that $(iv)$ implies $(i)$. 
\end{proof}

\begin{cor} Theorem 2.4 and Remark 2.5 of \textrm{\cite{GM}} hold true in $\mathbf{ZF}$.
\end{cor}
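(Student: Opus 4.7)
The plan is to establish the cycle $(i)\Rightarrow(ii)\Rightarrow(iii)\Rightarrow(iv)\Rightarrow(i)$. I expect the main obstacle to be $(ii)\Rightarrow(iii)$, which demands an explicit construction of a quasi-metric that is not merely uniformly equivalent but uniformly locally identical to $d$; the remaining three implications reduce to short estimates with the triangle inequality and the definitions of uniform equivalence and $(d,\rho_u)$-uniform continuity.

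For $(i)\Rightarrow(ii)$, I would fix $x_0\in X$, choose $\delta>0$ from uniform equivalence so that $d(x,y)<\delta$ forces $\rho(x,y)<\tfrac12$, and set $B_n=B_\rho(x_0,n+1)$. If $x\in[B_n]^\delta_d$, then picking $a\in B_n$ with $d(a,x)<\delta$ yields $\rho(x_0,x)\le\rho(x_0,a)+\rho(a,x)<n+\tfrac32<n+2$, hence $x\in B_{n+1}$.

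The crux is $(ii)\Rightarrow(iii)$. After normalizing so $B_0=\varnothing$ and $B_1\neq\varnothing$, I would define $\phi_0\equiv1$ and, for $n\ge1$, set $\phi_n(x)=\min\{1,\tfrac1\delta d(B_n,x)\}$; each $\phi_n$ is $(d,\rho_u)$-uniformly continuous with Lipschitz constant $\tfrac1\delta$, vanishes on $B_n$, and equals $1$ outside $B_{n+1}$. Paste these into $\chi(x)=n-2+\phi_{n-1}(x)$ on the shell $B_n\setminus B_{n-1}$. The key lemma is that $\chi$ is $(d,\rho_u)$-uniformly continuous: if $d(x,y)<\delta$ and $x$ lies in the shell of index $n$, the hypothesis $[B_n]^\delta_d\subseteq B_{n+1}$ confines $y$ to a shell of index $m\le n+1$, and a case split on $m\in\{n+1,n,n-1,\ldots\}$ yields $\chi(y)-\chi(x)\le\tfrac2\delta d(x,y)$. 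Finally I set
\[\rho(x,y)=\max\Bigl\{\min\{d(x,y),1\},\ \tfrac\delta2\max\{\chi(y)-\chi(x),0\}\Bigr\},\]
and verify that $\rho$ is a quasi-metric agreeing with $d$ whenever $d(x,y)<\tfrac\delta2$, that it is uniformly equivalent to $d$, and that $\mc B_\rho(X)=\mc B$, because $\rho$-boundedness of $A$ amounts to boundedness of $\chi$ on $A$, which by construction characterizes membership in $\mc B$.

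For $(iii)\Rightarrow(iv)$ I take $f(x)=\rho(x_0,x)$: since uniform equivalence preserves the induced topologies, Fact~4.3 gives bicontinuity of $f$ with respect to $(\tau(d),\tau(d^{-1}))$, and $f(y)-f(x)\le\rho(x,y)$ combined with uniform equivalence of $d,\rho$ yields $(d,\rho_u)$-uniform continuity, while $f$ being a characteristic function is immediate from $\mc B=\mc B_\rho(X)$. For $(iv)\Rightarrow(i)$, I define $d_g(x,y)=\min\{d(x,y),1\}+\max\{g(y)-g(x),0\}$ and verify in turn that $d_g$ is a quasi-metric, that $d$ and $d_g$ are uniformly equivalent (the clamp handles the $d$-to-$d_g$ direction, the uniform continuity of $g$ the other), and that $\mc B_{d_g}(X)=\mc B$ since the first summand is bounded by $1$ and the second is bounded on $A$ iff $g$ is bounded on $A$ iff $A\in\mc B$.
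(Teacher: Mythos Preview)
Your proposal is correct and follows essentially the same approach as the paper: the corollary is an immediate consequence of Theorem~6.5, and your cycle $(i)\Rightarrow(ii)\Rightarrow(iii)\Rightarrow(iv)\Rightarrow(i)$ reproduces the paper's proof of that theorem almost verbatim, including the key construction of $\chi$ via the shell functions $\phi_n$, the quasi-metric $\rho(x,y)=\max\{\min\{d(x,y),1\},\frac{\delta}{2}\max\{\chi(y)-\chi(x),0\}\}$ for uniform local identity, and the formula $d_g$ for the final implication.
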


One can use Example 10.16 (i)-(iii) given at the end of Section 10 to see that, for a quasi-metric $d$ in $X$ and a bornology $\mc{B}$ in $X$, it may happen that the bornological universe $((X, \tau(d)), \mc{B})$ is quasi-metrizable or even metrizable, while $\mc{B}$ is not uniformly quasi-metrizable with respect to $d$. 

\section{Applications to independence from ZF}

Mimicking \cite{GM}, let us consider the following bornologies in a metric space $(X, d)$: the bornology $\mathbf{FB}(X)$ of all finite subsets of $X$, the bornology $\mathbf{CB}_d(X)$ generated by the compact subsets of $(X, d)$, the bornology $\mathbf{TB}_d(X)$ of all totally bounded subspaces of $(X, d)$, as well as the bornology $\mathbf{BB}_d(X)$ of all Bourbaki-bounded sets. Several theorems about equivalents of the uniform metrizability of the bornologies $\mathbf{FB}(X)$, $\mathbf{CB}_d(X)$, $\mathbf{TB}_d(X)$  and $\mathbf{BB}_d(X)$ in $\mathbf{ZFC}$ were proved in \cite{GM}. We are going to show that some of the above-mentioned theorems of \cite{GM}  are independent of $\mathbf{ZF}$, while other theorems of \cite{GM} can be proved in $\mathbf{ZF}$. Clearly, we have already shown in the previous section that both Proposition 2.2 and Theorem 2.4 of \cite{GM} hold true in $\mathbf{ZF}$. 

The following theorem will be helpful: 

\begin{thm}
Equivalent are:
\begin{enumerate}
\item[(i)] $\mathbf{CC}(\text{fin})$;
\item[(ii)] for every discrete space $X$, the bornological universe $(X, \mathbf{FB}(X))$ is metrizable (in the sense of Hu) if and only if $X$ is countable.
\end{enumerate}
\end{thm}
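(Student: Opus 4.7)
The plan is to prove the two implications separately, with Theorem 1.9 (now established in $\mathbf{ZF}$ by Corollary 4.9) as the central tool in both directions.

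For (i)$\Rightarrow$(ii), assume $\mathbf{CC}(\text{fin})$. If a discrete space $X$ is countable, then $(X,\tau)$ is metrizable (via the $0/1$ metric), the bornology $\mathbf{FB}(X)$ is $\tau$-proper because every subset is clopen, and an injection $X\to\omega$ induces an injection $\mathbf{FB}(X)\to\mathbf{FB}(\omega)$; since $\mathbf{FB}(\omega)$ is countable in $\mathbf{ZF}$, the bornology $\mathbf{FB}(X)$ is its own countable base. Theorem 1.9 then makes $(X,\mathbf{FB}(X))$ metrizable. Conversely, suppose $(X,\mathbf{FB}(X))$ is metrizable for a discrete $X$. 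Theorem 1.9 supplies a countable base $\{B_n:n\in\omega\}$ for $\mathbf{FB}(X)$, and since every singleton is bounded, $X=\bigcup_{n\in\omega} B_n$. Writing $k_n=|B_n|\in\omega$ and $E_n$ for the finite set of bijections $k_n\to B_n$, the tagged family $\{E_n\times\{n\}:n\in\omega\}$ is a countable collection of pairwise disjoint, non-empty, finite sets, so $\mathbf{CC}(\text{fin})$ yields bijections $f_n:k_n\to B_n$. Then $(n,i)\mapsto f_n(i)$ defines a surjection from a subset of $\omega\times\omega$ onto $X$, whence $X$ is countable.

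For (ii)$\Rightarrow$(i), let $\mathcal{C}$ be a non-empty countable collection of pairwise disjoint non-empty finite sets. The case when $\mathcal{C}$ is finite is immediate in $\mathbf{ZF}$, so we may index $\mathcal{C}=\{A_n:n\in\omega\}$ with the $A_n$ pairwise distinct. Put $X=\bigcup_{n\in\omega}A_n$ with the discrete topology; then $(X,\tau)$ is metrizable and $\mathbf{FB}(X)$ is $\tau$-proper. The finite sets $B_n=\bigcup_{k\le n}A_k$ form a base for $\mathbf{FB}(X)$, because any finite $F\subseteq X$ meets only finitely many $A_k$: disjointness of the $A_k$ yields a surjection from $F$ onto the set of indices it meets, so this index set is finite. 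Theorem 1.9 then makes $(X,\mathbf{FB}(X))$ metrizable, and (ii) forces $X$ to be countable. Fixing an injection $\phi:X\to\omega$, the assignment $A_n\mapsto\phi^{-1}(\min\phi(A_n))$ is a well-defined choice function on $\mathcal{C}$.

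The routine ingredients---discrete metrizability, countability of $\mathbf{FB}(\omega)$ in $\mathbf{ZF}$, and surjective images of well-orderable countable sets being countable---are all standard. The real content lies in recognising that the countable base produced by Theorem 1.9 reduces the problem to enumerating a countable union of finite sets, which is controlled precisely by $\mathbf{CC}(\text{fin})$, and in the reverse direction using the pairwise disjointness of the $A_n$ to produce a concrete discrete space whose bornology of finite subsets is second-countable. No step appears to pose a genuine obstacle once Theorem 1.9 is available in $\mathbf{ZF}$.
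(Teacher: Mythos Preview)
Your proof is correct and follows essentially the same approach as the paper: both directions hinge on Theorem~4.7 (your Theorem~1.9 in $\mathbf{ZF}$), and the key construction for (ii)$\Rightarrow$(i) is the discrete space on $\bigcup_n A_n$ with the countable base $\{\bigcup_{k\le n}A_k\}$. The only cosmetic difference is that the paper argues (ii)$\Rightarrow$(i) by contrapositive, invoking Herrlich's characterisation of $\mathbf{CC}(\text{fin})$ to obtain an uncountable union directly, whereas you prove it forwards and extract the choice function explicitly from the injection $X\to\omega$; the content is the same.
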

\begin{proof}
Assume $(i)$ and let $X$ be any discrete space such that the bornological universe $(X, \mathbf{FB}(X))$ is metrizable.  It follows from Theorem 4.7 that $\mathbf{FB}(X)$ has a countable base. If $\mc{A}$ is a countable base for $\mathbf{FB}(X)$, then $X=\bigcup\mc{A}$, so, by Proposition 3.5 of \cite{Her},  $X$ is countable if $\mathbf{CC}(\text{fin})$ holds. It is obvious that if $X$ is a countable discrete space, then the bornological universe $(X, \mathbf{FB}(X))$ is metrizable in $\mathbf{ZF}$ by Theorem 4.7

Now, assume that $\mathbf{CC}(\text{fin})$ fails. Then, in view of Proposition 3.5 of \cite{Her}, there exists a sequence $(A_n)_{n\in\omega}$ of pairwise disjoint non-void finite sets such that the set $Z=\bigcup_{n\in\omega} A_n$ is uncountable. Let us equip $Z$ with its discrete topology. Then the collection $\{\bigcup_{m\in n} A_m: n\in\omega\}$ is a countable base for $\mathbf{FB}(Z)$. Then, by Theorem 4.7, the bornological universe $(Z, \mathbf{FB}(Z))$ is metrizable. This contradicts $(ii)$. 
\end{proof}

For a set $X$ and a cardinal number $\kappa$, let us use the notation $[X]^{\leq \kappa}$ for the collection of all subsets $A$ of $X$ such that $A$ is of cardinality at most $\kappa$ and the notation $[X]^{<\kappa}$ for the collection of all subsets of $X$ that are of cardinality $<\kappa$. (cf. Definition I.13.19 of \cite{Ku2}). Then $[X]^{<\omega}=\mathbf{FB}(X)$, while $[X]^{\leq\omega}$ is the bornology of all at most countable subsets of $X$. 
 
The proof to the following interesting theorem is somewhat more complicated than to Theorem 7.1. 
\begin{thm} 
Equivalent are:
\begin{enumerate}
\item[(i)] for every sequence $(X_n)_{n\in\omega}$ of non-void at most countable sets $X_n$, the product $\prod_{n\in\omega}X_n$ is non-void;
\item[(ii)] for every discrete space $X$, the bornological universe $(X, [X]^{\leq \omega})$ is metrizable if and only if $X$ is countable.
\end{enumerate}
\end{thm}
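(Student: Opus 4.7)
The plan is to mirror the two-implication structure of Theorem 7.1, but the direction (ii) $\Rightarrow$ (i) will require an additional amplification trick not needed in Theorem 7.1. The ``if'' half of the equivalence in (ii)---that $X$ countable implies $(X,[X]^{\leq\omega})$ metrizable---is immediate in \textbf{ZF}: $[X]^{\leq\omega}=\mc{P}(X)$ then admits the countable base $\{X\}$, so Theorem 4.7 applies (discreteness making $\tau$-properness automatic).

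For (i) $\Rightarrow$ (ii), I would assume $X$ is a discrete space with $(X, [X]^{\leq\omega})$ metrizable and prove $X$ countable. Theorem 4.7 supplies a countable base for $[X]^{\leq\omega}$; if $X\in[X]^{\leq\omega}$ we are done, and otherwise Proposition 3.5 yields a strictly increasing countable base $(C_n)_{n\in\omega}$. Setting $B_n=C_n\setminus C_{n-1}$ (with $C_{-1}=\emptyset$), the sets $B_n$ are at most countable and $B_n\neq\emptyset$ for every $n\geq 1$ by strictness. Applying (i) to the family $(B_n)_{n\geq 1}$ produces $(b_n)_{n\geq 1}\in\prod_{n\geq 1}B_n$; since the $B_n$ are pairwise disjoint, $n\mapsto b_n$ is injective, so $\{b_n:n\geq 1\}$ is at most countable. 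Yet for every $m$ we have $b_{m+1}\in\{b_n:n\geq 1\}\cap B_{m+1}$ and $B_{m+1}\cap C_m=\emptyset$, so $\{b_n:n\geq 1\}\not\subseteq C_m$, contradicting that $\{C_n\}$ is a base.

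The direction (ii) $\Rightarrow$ (i) is where I expect the main obstacle: the naive choice $Y=\bigsqcup_n X_n$ from a failed sequence $(X_n)$ will not do, because $\prod X_n=\emptyset$ does not by itself preclude choice along infinite subsequences, so $\{\bigsqcup_{k\leq n} X_k\}$ need not be a base for $[Y]^{\leq\omega}$. To bypass this, I would \emph{amplify} the sequence by setting $X'_n=\prod_{k\leq n} X_k$; each $X'_n$ is a finite product of non-void at most countable sets, hence non-void and at most countable in \textbf{ZF}. The decisive observation is that $\prod_{n\in J}X'_n=\emptyset$ for \emph{every} infinite $J\subseteq\omega$: an element $h$ of this product would give the global choice $x_k:=h(n_k)_k$ with $n_k=\min\{n\in J:n\geq k\}$, contradicting $\prod X_k=\emptyset$.

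Then I take $Y=\bigsqcup_n X'_n$ with the discrete topology. The decisive observation applied to $J=\omega$ and a hypothetical enumeration of $Y$ shows that $Y$ is uncountable; applied to an arbitrary at most countable $B\subseteq Y$, it shows $B$ can meet only finitely many summands $X'_k$, since otherwise an enumeration of $B$ would yield an element of $\prod_{k\in J}X'_k$ for some infinite $J$. Hence $\mc{A}=\{\bigsqcup_{k\leq n} X'_k:n\in\omega\}$ is a countable base for $[Y]^{\leq\omega}$, and Theorem 4.7 makes $(Y,[Y]^{\leq\omega})$ metrizable. By (ii), $Y$ must be countable---contradicting its uncountability---so $\prod X_n\neq\emptyset$, which is (i).
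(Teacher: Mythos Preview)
Your proposal is correct and follows essentially the same route as the paper's proof: for (i)$\Rightarrow$(ii) you extract a strictly increasing countable base and use (i) to select a diagonal countable set escaping every base member, and for (ii)$\Rightarrow$(i) you use the same amplification $X'_n=\prod_{k\le n}X_k$ (the paper's $Y_n$), the same observation that $\prod_{n\in J}X'_n=\emptyset$ for every infinite $J\subseteq\omega$, and the same conclusion that the disjoint union is uncountable while $\{\bigsqcup_{k\le n}X'_k:n\in\omega\}$ is a countable base for $[Y]^{\le\omega}$. The only cosmetic differences are that the paper phrases the key step as ``for every injection $f:\omega\to Y$ the set $\{n:f(\omega)\cap Y_n\neq\emptyset\}$ is finite'' and cites Herrlich's Theorem~2.12 for the decisive observation, whereas you prove it directly.
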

\begin{proof} 
Assume $(i)$. Let $X$ be a discrete space such that the bornological universe $(X, [X]^{\leq \omega})$ is metrizable. Then, by Theorem 4.7, there exists a countable base $\mc{B}=\{ X_n: n\in\omega\}$ for the bornology $[X]^{\leq\omega}$. Suppose that $X$ is uncountable. We may assume that $X_n\subseteq X_{n+1}$ and that $X_n\neq X_{n+1}$ for each $n\in\omega$. By $(i)$, there exists $x\in\prod_{n\in\omega}(X_{n+1}\setminus X_n)$. Then, for such an $x$, if $A=\{ x(n): n\in\omega\}$, then $A\in [X]^{\leq\omega}$, while there does not exist $n\in\omega$ such that $A\subseteq X_n$. This is impossible because $\mc{B}$ is a base for $[X]^{\leq\omega}$.  Therefore, $(i)$ implies $(ii)$.

Now, let us suppose that $(i)$ is false. Consider any sequence $(X_n)_{n\in\omega}$ of non-empty countable sets such that $\prod_{n\in\omega}X_n=\emptyset$. For each $n\in\omega$, the set $Y_n=\prod_{i\in n+1}X_i$ is countable and non-empty. In much the same way as in the proof to Theorem 2.12 of \cite{Her}, we can show that there does not exist an infinite set $M\subseteq \omega$ such that $\prod_{n\in M} Y_n\neq\emptyset$. Let $Y=\bigcup_{n\in\omega}Y_n$ and let $f:\omega\to Y$ be an injection. Then the set $M_f=\{n\in\omega: f(\omega)\cap Y_n\neq\emptyset\}$ is finite. This proves that $Y$ is uncountable and if $A_n=\bigcup_{m\in n+1}Y_m$ for $n\in\omega$, then the collection $\mc{A}=\{A_n: n\in\omega\}$ is a countable base for $[Y]^{\leq\omega}$. If we equip $Y$ with its discrete topology, we will obtain that $(ii)$ is false. Hence $(ii)$ implies $(i)$.
\end{proof}

\begin{rem} It is unknown to us whether there is a model for $\mathbf{ZF}$ in which $\mathbf{CUT}$ fails (cf. \cite{Her}), while condition $(i)$ of Theorem 7.2 is satisfied. 
\end{rem}

\begin{rem} It is evident that conditions $(1)$ and $(2)$ of Theorem 2.6 of \cite{GM} are equivalent in \textbf{ZF}. In view of our Theorem 6.5 and the proof of $(3)\Rightarrow (1)$ of Theorem 2.6 given in \cite{GM}, we have that $(3)\Rightarrow (1)$ of Theorem 2.6 of \cite{GM} holds true in \textbf{ZF}. Since $\mathbf{CC}(\text{fin})$ is relatively independent of \textbf{ZF}, it follows from our Theorem 7.1 that Theorem 2.6 of \cite{GM} is relatively independent of $\mathbf{ZF}$. If $\mathbf{M}$ is a model for $\mathbf{ZF}+\neg \mathbf{CC}(\text{fin})$, then Theorems 7.1 and 6.5  show that there exists in $\mathbf M$ an uncountable metric space $(X, d)$ such that $\mathbf{FB}(X)$ is uniformly metrizable with respect to $d$, so Theorem 2.6 of \cite{GM} fails in $\mathbf{M}$. Now, we can deduce from Proposition 3.5 of \cite{Her} that Theorem 2.6 of \cite{GM} is equivalent with $\mathbf{CC}(\text{fin})$.
\end{rem}

\begin{rem} Let us notice that both $(1)\Leftrightarrow (2)$ and $(3)\Rightarrow (1)$ of Theorem 3.1 of \cite{GM} hold true in $\mathbf{ZF}$. Unfortunately, Theorem 3.1 of \cite{GM} is relatively independent of $\mathbf{ZF}$. Namely, in much the same way as in Remark 7.4, we can show that in every model $\mathbf{M}$ for $\mathbf{ZF}+\neg \mathbf{CC}(\text{fin})$, there exists an uncountable set $X$ such that, for the discrete metric $d$ in $X$, the bornology $\mathbf{CB}_d(X)$ is uniformly metrizable with respect to $d$, while $(X, d)$ is not Lindel\"of but it is obviously uniformly locally compact.
\end{rem}

\begin{rem} As Gutierres showed in \cite{Gut}, while working with completions of metric spaces, one must be more careful in \textbf{ZF} than in $\mathbf{ZF+CC}$. Let us observe that if $\mathbf{M}$ is a model for $\mathbf{ZF}$ such that there is in $\mathbf{M}$ an uncountable set $X$ such that $\mathbf{FB}(X)$ is uniformly metrizable with respect to the discrete metric $d$ in $X$, then $\mathbf{TB}_d(X)=\mathbf{FB}(X)=\mathbf{BB}_d(X)$ is uniformly metrizable, while $(X, d)$ is neither Lindel\"of nor Bourbaki-separable. Therefore, Theorems 4.2  and 5.8 of \cite{GM} fail in $\mathbf{M}$. In the light of our Theorem 7.1 and of the fact that $\mathbf{CC}(\text{fin})$ is relatively independent of $\mathbf{ZF}$, Theorems 4.2 and 5.8 of \cite{GM} are relatively independent of \textbf{ZF}.
\end{rem}

Since many articles about bornologies have been published so far,  it may take a lot of time to investigate which of the theorems in the articles can fail in some models for $\mathbf{ZF}$. There are theorems about connections between bornologies and realcompactifications that have already appeared in print (cf. \cite{Vr2}) and they seem to be unprovable in $\mathbf{ZF}$. In view of Theorem 10.12 of \cite{PW}, perhaps, some of them can be proved in $\mathbf{ZF+UFT}$ where $\mathbf{UFT}$ stands for the Ultrafilter Theorem  (cf. \cite{Her}). Let us leave it as an open problem which of the theorems about bornologies that have been proved by other authors in $\mathbf{ZFC}$ may fail in models for $\mathbf{ZF}$ and which of them can be proved under weaker assumptions than $\mathbf{ZFC}$. We have given only a partial solution to this problem. 

\section{Compact bornologies in quasi-metric spaces}

In the light of Remark 7.5, Theorem 3.1 of \cite{GM} may fail in a model for $\mathbf{ZF}$.  We are going to prove in $\mathbf{ZF}$ its modified version for compact bornologies in quasi-metric spaces. 

For a topological space $(X, \tau)$, let $\mathbf{CB}_{\tau}(X)$ be the bornology in $X$ generated by the collection of all compact subsets of $(X, \tau)$. If it is useful, we shall use the notation $\mathbf{CB}((X, \tau))$ for $\mathbf{CB}_{\tau}(X)$.

\begin{defi} Let $d$ be a quasi-metric in $X$. 
\begin{enumerate}
\item[(i)] We denote by $\mathbf{CB}_d(X)$ the bornology $\mathbf{CB}_{\tau(d)}(X)$.
\item[(ii)] We say that $X$ is \textbf{uniformly locally compact with respect to} $d$ if there exists $\delta\in (0; +\infty)$ such that $B_d(x, \delta)\in\mathbf{CB}_d(X)$ for each $x\in X$. 
\end{enumerate} 
\end{defi}  

The following example shows that, contrary to compact bornologies in metric spaces,  it may happen that, for a quasi-metric $d$ in $X$, there is a set $A\in\mathbf{CB}_d(X)$ such that $\text{cl}_{\tau(d)}A\notin\mathbf{CB}_d(X)$. 

\begin{exam}
Let us consider the set  $X=X_1\cup X_2$ where $X_1=\{ \frac{1}{2^{2n}}: n\in\omega\}$ and $X_2=\{\frac{1}{2^{2n+1}}: n\in\omega\}$. Let $x, y\in X$. If $x=y$, we put $d(x, y)=0$. When $x\neq y$, we put $d(x, y)=1$ if either $x, y\in X_1$ or $x,y\in X_2$, or $x\in X_1, y\in X_2$; moreover, we put $d(x, y)= y$ if $x\in X_2, y\in X_1$. In this way, we have defined a quasi-metric on $X$ such that, for each $y\in X_2$, the set $A_y=\{y\}\cup X_1$ is compact in $(X, \tau(d))$, while  $\text{cl}_{\tau(d)}A_y=X\notin\mathbf{CB}_d(X)$. 
\end{exam}

\begin{defi} We say that a topological space $(X, \tau)$ is $\sigma$-$\mathbf{CB}$ if there exists a countable collection $\mc{A}\subseteq\mathbf{CB}_{\tau}(X)$ such that $X=\bigcup\mc{A}$. 
\end{defi}

\begin{rem} Clearly, it holds true in $\mathbf{ZF}$ that every $\sigma$-compact space is $\sigma$-$\mathbf{CB}$ and every $\sigma$-$\mathbf{CB}$ Hausdorff space is $\sigma$-compact. In every model for $\mathbf{ZF+CC}$, a topological space is $\sigma$-compact if and only if it is $\sigma$-$\mathbf{CB}$. We do not know whether there is a model for $\mathbf{ZF}+\neg \mathbf{CC}$ in which  a  topological space can be simultaneously $\sigma$-$\mathbf{CB}$ and not $\sigma$-compact.
\end{rem} 

\begin{thm} Let $d$ be a (quasi)-metric in $X$. Then the following conditions are equivalent:
\begin{enumerate}
\item[(i)] $\mathbf{CB}_d(X)$ is uniformly (quasi)-metrizable with respect to $d$;
\item[(ii)] $X$ is uniformly locally compact with respect to $d$ and $(X, \tau(d))$ is $\sigma$-$\mathbf{CB}$. 
\end{enumerate}
\end{thm}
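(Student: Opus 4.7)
The plan is to use Theorem 6.5 as the bridge. Under (i), that theorem supplies a countable base $\{B_n:n\in\omega\}$ of $\mathbf{CB}_d(X)$ and a uniform $\delta>0$ with $[B_n]_d^\delta\subseteq B_{n+1}$ for every $n$; conversely, given (ii), I shall build such a base by hand and then invoke Theorem 6.5 to recover (i).

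For $(i)\Rightarrow(ii)$ the argument is almost immediate. Each base element $B_n$ belongs to $\mathbf{CB}_d(X)$, so the family $\{B_n\}$ witnesses $\sigma$-$\mathbf{CB}$ once one checks $X=\bigcup_n B_n$; this holds because every singleton $\{x\}$ lies in $\mathbf{CB}_d(X)$ and is therefore contained in some $B_n$. For the same $n$ one has $B_d(x,\delta)\subseteq[\{x\}]_d^\delta\subseteq[B_n]_d^\delta\subseteq B_{n+1}\in\mathbf{CB}_d(X)$, which gives uniform local compactness with the same $\delta$.

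For $(ii)\Rightarrow(i)$, fix $\delta_0>0$ witnessing uniform local compactness and a countable family $\{K_n:n\in\omega\}\subseteq\mathbf{CB}_d(X)$ with $\bigcup_n K_n=X$; set $\delta=\delta_0/4$. The central lemma is: if $A\in\mathbf{CB}_d(X)$, then $[A]_d^\delta\in\mathbf{CB}_d(X)$. Indeed, $A$ lies in a finite union of compacta $C_1,\dots,C_m$; for each $j$ the $\tau(d)$-open balls $B_d(c,\delta)$ with $c\in C_j$ cover $C_j$, and compactness extracts finitely many centres $c^j_1,\dots,c^j_{k_j}$ with $C_j\subseteq\bigcup_i B_d(c^j_i,\delta)$. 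Any $y\in[C_j]_d^\delta$ satisfies $d(c,y)<\delta$ for some $c\in C_j\cap B_d(c^j_i,\delta)$, and the triangle inequality gives $d(c^j_i,y)<2\delta<\delta_0$; hence $y\in B_d(c^j_i,\delta_0)\in\mathbf{CB}_d(X)$. So $[A]_d^\delta$ sits in a finite union of $\delta_0$-balls, proving the lemma. Now set $B_0=\emptyset$ and $B_{n+1}=K_n\cup[B_n]_d^\delta$; induction gives $B_n\in\mathbf{CB}_d(X)$ and $B_n\subseteq[B_n]_d^\delta\subseteq B_{n+1}$. The $\tau(d)$-open sets $V_n:=[B_n]_d^\delta$ are nested, sandwich $B_n\subseteq V_n\subseteq B_{n+1}$, and cover $X$ since $K_n\subseteq B_{n+1}\subseteq V_{n+1}$. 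Any compact set lies in a finite subcollection of $V_n$'s, hence in a single $V_m\subseteq B_{m+1}$ by nesting, and then every member of $\mathbf{CB}_d(X)$ lies in some $B_n$ as well; thus $\{B_n\}$ is a base with the required uniform growth. Theorem 6.5 delivers (i) in the quasi-metric case, and the obvious symmetrization of its construction (replacing $\max\{\chi(y)-\chi(x),0\}$ by $|\chi(x)-\chi(y)|$, in the manner of Theorem 4.7's metric branch) handles the metric case.

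I expect the central obstacle to be the $\delta$-neighbourhood lemma itself. Example 8.2 cautions that in a quasi-metric space the closure of a $\mathbf{CB}_d$-set may escape the bornology, so closures are unavailable and one must work with open $\delta$-neighbourhoods; the whole construction then hinges on the margin $\delta_0/4$, which the triangle inequality converts into containment inside finitely many $\delta_0$-balls. Every step of the argument is finitary, so no appeal to countable choice is needed and the proof remains inside $\mathbf{ZF}$.
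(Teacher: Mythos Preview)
Your proof is correct and follows essentially the same route as the paper's: both directions hinge on Theorem~6.5, and the core of $(ii)\Rightarrow(i)$ is the lemma that $\delta$-neighbourhoods preserve $\mathbf{CB}_d(X)$ under uniform local compactness, proved via a finite cover of each compact piece by small balls and the triangle inequality. The differences are cosmetic: for $(i)\Rightarrow(ii)$ the paper works directly from a uniformly equivalent $\rho$ rather than from the base produced by Theorem~6.5(ii); for $(ii)\Rightarrow(i)$ the paper uses the margin $\delta/2$ (your $\delta_0/2$) and shows $\{[A_n]_d^{\delta/2}\}$ is already a base, then passes to a subsequence, whereas you build the base recursively via $B_{n+1}=K_n\cup[B_n]_d^\delta$. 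Your explicit remark on symmetrizing the construction of $\rho$ for the metric case is a detail the paper leaves to Corollary~6.6 and the cited~\cite{GM}.
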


\begin{proof} Assume $(i)$. Let $\rho$ be a uniformly equivalent with $d$ quasi-metric in $X$ such that $\mathbf{CB}_d(X)$ is the collection of all $\rho$-bounded sets. There exists $\delta\in (0; +\infty)$ such that $\rho(x, y)<1$ whenever $d(x, y)<\delta$. Then, for each $x\in X$, we have  $B_d(x, \delta)\subseteq B_{\rho}(x, 1)\in \mathbf{CB}_d(X)$, so $X$ is uniformly locally compact with respect to $d$. Moreover, since, by Theorem 4.7, $\mathbf{CB}_d(X)$ has a countable base, we deduce that $(X, \tau(d))$ is $\sigma$-$\mathbf{CB}$.

Now, assume $(ii)$. Let $\delta\in (0; +\infty)$ be such that, for each $x\in X$, we have $B_d(x, \delta)\in \mathbf{CB}_d(X)$. Let $C$ be compact in $(X, \tau(d))$. It follows from the compactness of $C$ that there exists a finite set $K\subseteq C$ such that $C\subseteq \bigcup_{x\in K}B_d(x, \frac{\delta}{2})$. Then $[C]^{\frac{\delta}{2}}_d\subseteq\bigcup_{x\in K}[B_d(x,\frac{\delta}{2})]^{\frac{\delta}{2}}_d\subseteq\bigcup_{x\in K}B_d(x, \delta)\in \mathbf{CB}_d(X)$. Therefore, $[C]^{\frac{\delta}{2}}_d\in \mathbf{CB}_d(X)$ (cf. proof to 3.1 in \cite{GM}). This implies that $[C]^{\frac{\delta}{2}}_d\in \mathbf{CB}_d(X)$ whenever $C\in\mathbf{CB}_d(X)$. 

Let $\mc{A}=\{A_n: n\in\omega\}\subseteq\mathbf{CB}_d(X)$ be such that $X=\bigcup\mc{A}$. We may assume that $A_n\subseteq A_{n+1}$ for each $n\in\omega$. If $C$ is compact in $(X, \tau(d))$ then, since $C\subseteq \bigcup_{n\in\omega}[A_n]^{\frac{\delta}{2}}_d$, there exists $m\in\omega$ such that $C\subseteq \bigcup_{n\in m+1}[A_n]^{\frac{\delta}{2}}_d=[A_m]^{\frac{\delta}{2}}_d\in\mathbf{CB}_d(X)$. Therefore, the collection $\{[A_n]^{\frac{\delta}{2}}_d: n\in\omega\}$, is a countable base for $\mathbf{CB}_d(X)$.  This, together with the fact that $[C]^{\frac{\delta}{2}}_d\in \mathbf{CB}_d(X)$ whenever $C\in\mathbf{CB}_d(X)$, implies that there exists a subsequence $(B_n)_{n\in\omega}$ of the sequence $([A_n]^{\frac{\delta}{2}}_d)_{n\in\omega}$ such that $[B_n]^{\frac{\delta}{2}}_d\subseteq B_{n+1}$ for each $n\in\omega$. Then $\{ B_n: n\in\omega\}$ is a base for $\mathbf{CB}_d(X)$. This, together with Theorem 6.5, implies that  $(i)$ follows from $(ii)$.   
\end{proof}

\begin{exam}
Let $(X, d)$ be the quasi-metric space from Example 8.2. Then condition $(ii)$ of Theorem 8.5 is satisfied; hence,  the bornology $\mathbf{CB}_d(X)$ is uniformly quasi-metrizable with respect to $d$. We can also define a uniformly locally identical with $d$ quasi-metric $\rho$ in $X$ such that $\mathbf{CB}_d(X)=\mc{B}_{\rho}(X)$. To do this, let us consider $x, y\in X$. We put $\rho(x, y)=0$ if $x=y$. Now, suppose that $x\neq y$. Then $\rho(x, y)=1$ if $x, y\in X_1$. For $x\in X_2$ and $y\in X_1$, we define $\rho(x, y)=y$. Finally, for $x\in X$ and $y\in X_2$, we put $\rho(x, y)=\frac{1}{y}$. 
\end{exam}

Using similar arguments to the ones of the proof to Theorem 8.5, we deduce the following corollary:

\begin{cor}[cf. \cite{WJ}, Theorem 3.1 of \cite{GM} and  Corollary 3.3 of \cite{GM}]
 For every metric space $(X, d)$, it holds true in $\mathbf{ZF}$ that $\mathbf{CB}_d(X)$ is uniformly metrizable with respect to $d$ if and only if $(X, d)$ is both $\sigma$-compact and uniformly locally compact.
\end{cor}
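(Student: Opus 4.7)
The plan is to reduce this corollary to Theorem 8.5, using two observations: a metric is a special quasi-metric, and a Hausdorff $\sigma$-$\mathbf{CB}$ space is $\sigma$-compact (Remark 8.4). The only real work is to refine the quasi-metric produced by the constructive proofs of Theorems 6.5 and 8.5 into a bona fide metric when the starting $d$ is symmetric.

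For the forward direction, I would simply observe that if $\mathbf{CB}_d(X)$ is uniformly metrizable with respect to $d$, then it is uniformly quasi-metrizable with respect to $d$. By Theorem 8.5, this yields that $X$ is uniformly locally compact with respect to $d$ and that $(X,\tau(d))$ is $\sigma$-$\mathbf{CB}$. Since $(X,\tau(d))$ is Hausdorff, Remark 8.4 lets me upgrade $\sigma$-$\mathbf{CB}$ to $\sigma$-compact, which finishes this implication.

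For the converse, starting from uniform local compactness and $\sigma$-compactness, I would run the construction inside the proof of Theorem 8.5 verbatim to obtain, for some $\delta\in(0;+\infty)$, a base $\{B_n:n\in\omega\}$ of $\mathbf{CB}_d(X)$ with $[B_n]^{\delta/2}_d\subseteq B_{n+1}$. Then Theorem 6.5 applied with the value $\delta/2$ supplies a uniformly equivalent quasi-metric. To turn this into a metric, I would retrace the construction of the characteristic function $\chi:X\to[0;+\infty)$ in the proof of Theorem 6.5; the functions $\phi_n(x)=\min\{1,\tfrac{2}{\delta}d(B_n,x)\}$ are built from $d$, so when $d$ is symmetric the $(d,\rho_u)$-uniform continuity of $\chi$ established there automatically upgrades to uniform continuity of $\chi$ as a map from $(X,d)$ to $\mathbb{R}$ equipped with the Euclidean metric. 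Imitating the metric-case trick from the proof of $(ii)\Rightarrow (i)$ of Theorem 4.7, I would then set
\[
 \rho(x,y)=\min\{d(x,y),1\}+\tfrac{\delta}{2}\lvert\chi(y)-\chi(x)\rvert,
\]
which is a metric on $X$ uniformly locally identical with $d$ (and in particular uniformly equivalent to $d$).

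The main obstacle is checking that the bounded sets of this symmetric $\rho$ are precisely the members of $\mathbf{CB}_d(X)$; this is where the refinement must not lose control over the bornology. However, because $A\in\mathbf{CB}_d(X)$ if and only if $\chi$ is bounded on $A$ (this being the defining property of $\chi$ obtained in Theorem 6.5), and because $\min\{d,1\}$ is always bounded, both $\rho$-boundedness and $\mathbf{CB}_d(X)$-membership reduce to the same condition $\sup\chi(A)<+\infty$. With that verification, Corollary 8.7 follows; no ingredients beyond isolating the symmetry inside the existing proofs are required.
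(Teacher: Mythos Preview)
Your approach is correct and matches the paper's own one-line justification (``using similar arguments to the ones of the proof to Theorem~8.5''): you run Theorem~8.5 and then symmetrize the output of Theorem~6.5 by exploiting that $(d,\rho_u)$-uniform continuity of $\chi$ upgrades to full uniform continuity when $d$ is a metric.

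One small overclaim to fix: the metric $\rho(x,y)=\min\{d(x,y),1\}+\tfrac{\delta}{2}\lvert\chi(y)-\chi(x)\rvert$ is uniformly \emph{equivalent} to $d$, but not uniformly locally \emph{identical} with it, since the added term $\tfrac{\delta}{2}\lvert\chi(y)-\chi(x)\rvert$ is generally positive on every neighbourhood of the diagonal. This does not harm the corollary, which via Definition~6.3 only asks for uniform equivalence; if you want local identity as well, replace the sum by the $\max$ as in Theorem~6.5, using $\lvert\chi(y)-\chi(x)\rvert$ in place of $\max\{\chi(y)-\chi(x),0\}$.
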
  

\section{Fundamental bornologies in gtses}

A new problem is to find an appropriate definition of (quasi)-metrizability for a generalized topological space (in abbreviation: a gts) in the sense of Delfs and Knebusch. Since the notion of a gts in this sense is rather complicated (cf. \cite{DK}, \cite{Pie1}) and it seems that it is still not commonly known to the mathematical community, let us recall it to make our paper more legible.  

\begin{defi}[cf. Definition 2.2.2 in \cite{Pie1}]
 A \textbf{generalized topological space} in the sense of Delfs and Knebusch (abbreviated to gts) is a triple $(X, \text{Op}_X, \text{Cov}_X)$ where $X$ is a set for which $\text{Op}_X\subseteq \mc{P}(X)$, while $\text{Cov}_X\subseteq \mc{P}(\text{Op}_X)$ and the following conditions are satisfied:
\begin{enumerate}
\item[(i)]  if $\mc{U}\subseteq \text{Op}_X$ and $\mc{U}$ is finite, then $\bigcup\mc{U}\in\text{Op}_X, \bigcap\mc{U}\in\text{Op}_X$ and  $\mc{U}\in \text{Cov}_X$;
\item[(ii)] if $\mc{U}\in\text{Cov}_X, V\in\text{Op}_X$ and $V\subseteq\bigcup\mc{U}$, then $\{ U\cap V: U\in\mc{U}\}\in\text{Cov}_X$;
\item[(iii)] if $\mc{U}\in\text{Cov}_X$ and, for each $U\in \mc{U}$, we have $\mc{V}(U)\in\text{Cov}_X$ such that $\bigcup\mc{V}(U)=U$, then $\bigcup_{U\in\mc{U}}\mc{V}(U)\in\text{Cov}_X$;
\item[(iv)] if $\mc{U}\subseteq \text{Op}_X$ and $\mc{V}\in \text{Cov}_X$ are such that $\bigcup\mc{V}=\bigcup\mc{U}$ and, for each $V\in\mc{V}$ there exists $U\in\mc{U}$ such that $V\subseteq U$, then $\mc{U}\in\text{Cov}_X$;
\item[(v)] if $\mc{U}\in\text{Cov}_X$, $V\subseteq\bigcup_{U\in\mc{U}}U$ and, for each $U\in\mc{U}$, we have $V\cap U\in\text{Op}_X$, then $V\in\text{Op}_X$.
\end{enumerate}
\end{defi}

\begin{rem} If  $(X, \text{Op}_X, \text{Cov}_X)$ is a gts, then $\text{Op}_X=\bigcup\text{Cov}_X$ and, therefore, we can identify the gts with the ordered pair $(X, \text{Cov}_X)$ (cf. \cite{Pie1}, \cite{PW}). If this is not misleading, we shall denote a gts $(X, \text{Cov}_X)$ by $X$.
\end{rem}
 
As far as gtses are concerned, we shall use the terminology of \cite{DK}, \cite{Pie1}-\cite{Pie2} and \cite{PW}.

\begin{defi}[cf. \cite{Pie1}] If $ X=(X, \text{Cov}_X)$ and $Y=(Y, \text{Cov}_Y)$ are gtses, then:
\begin{enumerate}
\item[(i)] a set $U\subseteq X$ is called \textbf{open} in the gts $X$ if $U\in\text{Op}_X$;
\item[(ii)] the collection $\text{Cov}_X$ is \textbf{the generalized topology} in $X$;
\item[(iii)] an \textbf{admissible open family} in the gts $X$ is a member of $\text{Cov}_X$;
\item[(iv)] a mapping $f:Y\to X$ is $(\text{Cov}_Y, \text{Cov}_X)$-\textbf{strictly continuous} (in abbreviation: strictly continuous) if, for each $\mc{U}\in \text{Cov}_X$, we have $\{ f^{-1}(U):U\in \mc{U}\}\in \text{Cov}_Y$.
\end{enumerate}
\end{defi}

In this section, let us have a brief look at very natural bornologies in generalized topological spaces. In the next section, we apply the bornologies in gtses to our concepts of (quasi)-metrizability in the category $\mathbf{GTS}$ of generalized topological spaces and strictly continuous mappings.  

\begin{defi}[cf. Definitions 2.2.13 and 2.2.25 of \cite{Pie1}]
 If $K$ is a subset of a set $X$, then we say that a family $\mc{U}\subseteq\mc{P}(X)$ is \textbf{essentially finite on} $K$ if there exists a finite $\mc{V}\subseteq\mc{U}$ such that $K\cap\bigcup\mc{U}\subseteq\bigcup\mc{V}$.
\end{defi}

\begin{defi}[cf. Definition 2.2.25 of \cite{Pie1}]
 If $X=(X,\text{Cov}_X)$ is a gts, then a set $K\subseteq X$ is called \textbf{small in the gts $X$} if each family $\mc{U}\in\text{Cov}_X$ is essentially finite on $K$.
\end{defi}

The collection of all small sets of a gts $X$ is a bornology in $X$ (cf. Fact 2.2.30 of \cite{Pie1}). 

\begin{defi} 
For a gts $X$, \textbf{the small bornology} of $X$ is the collection $\mathbf{Sm}(X)$ of all small sets in $X$.
\end{defi}

$\mathbf{Sm}(X)$ was denoted by $\text{Sm}_X$ in \cite{Pie1} but, since we are inspired by \cite{GM} and we use the notation of \cite{GM}, we have replaced $\text{Sm}_X$ by $\mathbf{Sm}(X)$ partly for elegance, partly for convenience. 

\begin{defi}[cf. Definition 3.2 of \cite{PW}]
 If $X$ is a gts, we call a set $A\subseteq X$ \textbf{admissibly compact} in $X$ if, for each $\mc{U}\in\text{Cov}_X$ such that $A\subseteq\bigcup\mc{U}$, there exists a finite $\mc{V}\subseteq \mc{U}$ such that $A\subseteq\bigcup\mc{V}$ .
\end{defi}

\begin{defi} For a gts $X$, \textbf{the admissibly compact bornology} of $X$ is the collection $\mathbf{ACB}(X)$ of all subsets of admissibly compact sets of the gts $X$. 
\end{defi}

For a collection $\mc{A}$ of subsets of a set $X$, we denote by $\tau(\mc{A})$ the weakest among all topologies in $X$ that contain $\mc{A}$. For a gts $(X, \text{Op}_X, \text{Cov}_X)$, we call the topological space $X_{top}=(X, \tau(\text{Op}_X))$ \textbf{the topologization of the gts} $X$ (cf. \cite{PW}).

\begin{defi} Let $X$ be a gts. We say that a set $A$ is \textbf{topologically compact} in $X$ if $A$ is compact in $X_{top}$ (cf.  Definition 3.2 of \cite{PW}).
The compact bornology $\mathbf{CB}(X_{top})$ will be called \textbf{the compact bornology of the gts} $X$ and it will be denoted by $\mathbf{CB}(X)$.
\end{defi}

\begin{f} For every gts $X$, the inclusion $(\mathbf{Sm}(X)\cup\mathbf{CB}(X))\subseteq \mathbf{ACB}(X)$ holds.
\end{f}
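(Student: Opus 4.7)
The statement to prove is $(\mathbf{Sm}(X)\cup\mathbf{CB}(X))\subseteq\mathbf{ACB}(X)$ for an arbitrary gts $X$. The plan is to handle each summand separately, unwinding the relevant definitions. Both inclusions are essentially definitional, so I expect no real obstacle; the only thing to be slightly careful about is that $\mathbf{CB}(X)$ is generated by the topologically compact sets rather than consisting only of them, so I must use that $\mathbf{ACB}(X)$ is a bornology.

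First, I would verify $\mathbf{Sm}(X)\subseteq\mathbf{ACB}(X)$ by a direct argument. Take $A\in\mathbf{Sm}(X)$; I claim $A$ itself is admissibly compact, which suffices. Let $\mc{U}\in\text{Cov}_X$ with $A\subseteq\bigcup\mc{U}$. By smallness, $\mc{U}$ is essentially finite on $A$: there is a finite $\mc{V}\subseteq\mc{U}$ with $A\cap\bigcup\mc{U}\subseteq\bigcup\mc{V}$. Since $A\subseteq\bigcup\mc{U}$, this gives $A\subseteq\bigcup\mc{V}$, which is precisely admissible compactness of $A$. Hence $A\in\mathbf{ACB}(X)$.

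Next, I would show $\mathbf{CB}(X)\subseteq\mathbf{ACB}(X)$. I first observe that $\mathbf{ACB}(X)$ is a bornology: it contains singletons, is closed under subsets by definition, and if $C,D$ are admissibly compact, then so is $C\cup D$ (any admissible cover restricts to admissible covers of $C$ and $D$, whose finite subcovers union to a finite subcover of $C\cup D$). Because $\mathbf{CB}(X)=\mathbf{CB}(X_{top})$ is the bornology generated by the topologically compact subsets, it is enough to show that every topologically compact $K\subseteq X$ lies in $\mathbf{ACB}(X)$. Let $\mc{U}\in\text{Cov}_X$ with $K\subseteq\bigcup\mc{U}$. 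Every $U\in\mc{U}$ belongs to $\text{Op}_X\subseteq\tau(\text{Op}_X)$, so $\mc{U}$ is an open cover of $K$ in $X_{top}$. Compactness of $K$ in $X_{top}$ yields a finite $\mc{V}\subseteq\mc{U}$ with $K\subseteq\bigcup\mc{V}$, showing that $K$ is admissibly compact and therefore $K\in\mathbf{ACB}(X)$.

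Combining the two inclusions gives $\mathbf{Sm}(X)\cup\mathbf{CB}(X)\subseteq\mathbf{ACB}(X)$, as required. The only conceptual point worth flagging is the need to invoke the bornology structure of $\mathbf{ACB}(X)$ when passing from ``topologically compact sets lie in $\mathbf{ACB}(X)$'' to ``the bornology they generate lies in $\mathbf{ACB}(X)$''; everything else is immediate from Definitions 9.5, 9.7 and 9.9.
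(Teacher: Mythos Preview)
Your proof is correct; the paper itself states this result as a Fact without proof, treating it as immediate from the definitions, and your argument supplies exactly the natural verification one would give. The only point you made explicit that the paper leaves implicit is that $\mathbf{ACB}(X)$ is a bornology, which is needed to pass from ``topologically compact sets are admissibly compact'' to $\mathbf{CB}(X)\subseteq\mathbf{ACB}(X)$.
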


In general,  the collections $\mathbf{Sm}(X)\cup \mathbf{CB}(X)$ and $\mathbf{ACB}(X)$ can be distinct and neither $\mathbf{Sm}(X)\subseteq\mathbf{CB}(X)$ nor $\mathbf{CB}(X)\subseteq\mathbf{Sm}(X)$.

\begin{exam} \label{comp-born}
For $X=\mathbb{R}\times\{0, 1\}$, let $\text{Op}_X$ be the natural topology in $X$ inherited from the usual topology of $\mathbb{R}$ and let $\text{Cov}_X$ be the collection of all families $\mc{U}\subseteq \text{Op}_X$ such that $\mc{U}$ is essentially finite on $\mathbb{R}\times\{0\}$. Then, for  $A=[0; 1]\times\{1\}$ and $B=\mathbb{R}\times\{0\}$, we have $A\in\mathbf{CB}(X)\setminus\mathbf{Sm}(X)$  and $B\in\mathbf{Sm}(X)\setminus\mathbf{CB}(X)$, while $A\cup B\in\mathbf{ACB}(X)\setminus (\mathbf{CB}(X)\cup\mathbf{Sm}(X))$ . 
\end{exam}

For a set $X$ and a collection $\Psi\subseteq\mc{P}^2(X)$, we denote by $\langle \Psi\rangle_X$ the smallest among  generalized topologies in $X$ that contain $\Psi$. If $\mc{A}\subseteq \mc{P}(X)$, let $\text{EssCount}(\mc{A})$ be the collection of all essentially countable subfamilies of $\mc{A}$. We recall that $\text{EssFin}(\mc{A})$ is the collection of all essentially finite subfamilies of $\mc{A}$ (cf. \cite{Pie1}-\cite{Pie2} and \cite{PW}).

\begin{f}[cf. Examples 2.2.35 and 2.2.14(8) of \cite{Pie1}]
 Let $(X, \tau)$ be a topological space. That $\text{EssFin}(\tau)$ is a generalized topology in $X$ is true in $\mathbf{ZF}$. That $\text{EssCount}(\tau)$ is a generalized topology in $X$ is true in $\mathbf{ZF+CC}$.
\end{f}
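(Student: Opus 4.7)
The plan is to verify the five axioms of Definition 9.1 one by one, with $\text{Op}_X=\tau$ and $\text{Cov}_X$ taken first as $\text{EssFin}(\tau)$ (working in $\mathbf{ZF}$) and then as $\text{EssCount}(\tau)$ (working in $\mathbf{ZF+CC}$). Since $\{U\}\in\text{EssFin}(\tau)\subseteq\text{EssCount}(\tau)$ for every $U\in\tau$, the collection of open sets generated is $\tau$ itself, so the topology axioms for $\tau$ may be freely used. Throughout, when $\mc{U}$ is essentially finite (resp.\ essentially countable), I write $\mc{V}_{0}\subseteq\mc{U}$ for a finite (resp.\ countable) subfamily with $\bigcup\mc{V}_{0}=\bigcup\mc{U}$.

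Axioms (i), (ii), (v) go through uniformly in $\mathbf{ZF}$ for both cases. For (i), any finite $\mc{U}\subseteq\tau$ is its own witness and $\bigcup\mc{U},\bigcap\mc{U}\in\tau$ by the topology axioms. For (ii), if $W\in\tau$ with $W\subseteq\bigcup\mc{U}=\bigcup\mc{V}_{0}$, then $\{U\cap W:U\in\mc{V}_{0}\}$ is a finite (resp.\ countable) subfamily of $\{U\cap W:U\in\mc{U}\}$ with the same union $W$. For (v), the identity $V=V\cap\bigcup\mc{V}_{0}=\bigcup_{U\in\mc{V}_{0}}(V\cap U)$ exhibits $V$ as a finite (resp.\ countable) union of members of $\tau$, which lies in $\tau$.

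The axioms that separate the two cases are (iii) and (iv). In (iii), given the witness $\mc{V}_{0}$ of $\mc{U}$ and, for each $U\in\mc{V}_{0}$, a witness $\mc{W}(U)\subseteq\mc{V}(U)$ with $\bigcup\mc{W}(U)=U$, the family $\bigcup_{U\in\mc{V}_{0}}\mc{W}(U)$ serves as a witness for $\bigcup_{U\in\mc{U}}\mc{V}(U)$. In the finite case only finitely many choices are needed, so $\mathbf{ZF}$ suffices; in the countable case $\mathbf{CC}$ lets one choose the $\mc{W}(U)$ for the countably many $U\in\mc{V}_{0}$, and a further appeal to $\mathbf{CC}$ (ensuring that countable unions of countable sets are countable) gives that the resulting total witness is countable. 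Axiom (iv) is treated analogously: for each $V\in\mc{V}_{0}$ pick some $U(V)\in\mc{U}$ with $V\subseteq U(V)$, so that $\{U(V):V\in\mc{V}_{0}\}$ is a finite (resp.\ countable) subfamily of $\mc{U}$ whose union contains $\bigcup\mc{V}_{0}=\bigcup\mc{U}$; again only finitely many choices are needed in the first case, countably many (hence $\mathbf{CC}$) in the second.

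The main obstacle is simply the careful bookkeeping of choices: one must confirm that in the essentially finite case every step reduces to a finite selection, so that no choice axiom sneaks in, while in the essentially countable case the choices in (iii) and (iv) really are countable selections covered exactly by $\mathbf{CC}$.
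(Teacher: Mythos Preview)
Your proof is correct. The paper itself offers no argument for this statement: it is recorded as a Fact with a reference to \cite{Pie1}, so there is nothing to compare against beyond noting that your direct axiom-by-axiom verification is precisely the expected route. Your bookkeeping of choices is accurate: axioms (i), (ii), (v) need no choice at all (indeed for (v) one does not even need the witness $\mc{V}_0$, since $\tau$ is closed under arbitrary unions), while (iii) and (iv) reduce to finitely many selections in the $\text{EssFin}$ case and to countably many selections plus a countable-union-of-countable-sets step in the $\text{EssCount}$ case, exactly matching the claimed set-theoretic strength.
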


\begin{rem}
It is unprovable in $\mathbf{ZF}$ that, for every topological space $(X, \tau)$,  the collection $\text{EssCount}(\tau)$ is a generalized topology in $X$. Namely, let $\mathbf{M}$ be a model for $\mathbf{ZF+\neg CC}(\text{fin})$. In view of the proof to Theorem 7.1, there exists in $\mathbf{M}$ an uncountable set $X$ such that $X$ is a countable union of finite sets. Let $\tau=\mc{P}(X)$. If $\text{EssCount}(\tau)$ were a generalized topology in $X$, the family of all singletons of $X$ would belong to $\text{EssCount}(\tau)$ which is impossible since $X$ is uncountable. 
\end{rem}

Let us observe that, for the gts $X$ from Example \ref{comp-born}, the admissibly compact bornology of $X$ is generated by $\mathbf{CB}(X)\cup\mathbf{Sm}(X)$.
That not every gts may share this property is shown by the following example:

\begin{exam} (\textbf{ZF+CC}) For $X=\omega_1$, let $\text{Op}_X$ be the topology induced by the usual linear order in $\omega_1$ and let $\text{Cov}_X=\text{EssCount}(\text{Op}_X)$. Then $\mathbf{Sm}(X)=\mathbf{FB}(X)\neq \mathbf{CB}(X)\neq \mathbf{ACB}(X)=\mc{P}(X)$. 
\end{exam} 

In what follows, for sets  $X, Y$ with $Y\subseteq X$ and for $\Psi\subseteq \mc{P}^2(X)$, we use the notation $\Psi\cap_2Y$  from \cite{Pie1} for the collection of all families $\mc{U}\cap_1 Y=\{U\cap Y: U\in\mc{U}\}$ where $\mc{U}\in\Psi$. We want to describe $\langle\Psi\cap_2 Y\rangle_Y$ more precisely in the case when $\Psi\cap_2 Y\subseteq \text{EssFin}(\mc{P}(Y))$. To do this, we need the concept of a complete ring of sets in $Y$ that was of frequent use in \cite{PW}. Namely, \textbf{ a complete ring in $Y$} is a collection $\mc{C}\subseteq \mc{P}(Y)$ such that $\emptyset, Y\in\mc{C}$, while $\mc{C}$ is closed under finite unions and under finite intersections. For $\mc{A}\subseteq \mc{P}(Y)$, let $L_Y[\mc{A}]$ be the intersection of all complete rings in $Y$ that contain $\mc{A}$. 

\begin{prop} For a set $X$, let $\Psi\subseteq \mc{P}^2(X)$. Suppose that $Y\subseteq X$ and that each family from $\Psi$ is essentially finite on $Y$. Then  the following conditions are satisfied: 
\begin{enumerate}
\item[(i)] $\langle \Psi\cap_2 Y\rangle_Y=\text{EssFin}( L_Y[\bigcup(\Psi\cap_2 Y)])=\\\text{EssFin}(\bigcup\langle\Psi\cap_2 Y\rangle_Y)=\text{EssFin}(\bigcup\langle \Psi\rangle_X)\cap_2 Y$;
\item[(ii)] each family from $\langle\Psi\rangle_X$ is essentially finite on $Y$.
\end{enumerate}
\end{prop}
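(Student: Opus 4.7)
My plan is to establish (ii) first by introducing an auxiliary generalized topology on $X$, and then to obtain all four expressions in (i) as consequences. The main obstacle will be the last equality in (i) involving $\text{EssFin}(\bigcup\langle\Psi\rangle_X)\cap_2 Y$: a naive lift of an arbitrary $\mc{A}\in\text{EssFin}(L_Y[\bigcup(\Psi\cap_2 Y)])$ to a family in $\text{EssFin}(\bigcup\langle\Psi\rangle_X)$ would demand a preimage for every $A\in\mc{A}$, which in \textbf{ZF} is illegal when $\mc{A}$ is infinite; I will bypass this by defining the lift via set-theoretic comprehension, using only finite choice.

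For (ii), define
\[
\mc{D}=\{V\subseteq X:V\cap Y\in L_Y[\bigcup(\Psi\cap_2 Y)]\}
\]
and let $\text{Cov}'=\{\mc{U}\subseteq\mc{D}:\mc{U}\text{ is essentially finite on }Y\}$. Since $L_Y[\bigcup(\Psi\cap_2 Y)]$ is closed under finite unions and intersections, so is $\mc{D}$; a careful but direct check, exploiting that essential finiteness on $Y$ is stable under each operation in Axioms (i)--(v) of Definition 9.1, shows that $(X,\text{Cov}')$ is a generalized topology. Every $\mc{U}\in\Psi$ lies in $\text{Cov}'$: it is essentially finite on $Y$ by hypothesis, and each $U\in\mc{U}$ satisfies $U\cap Y\in\bigcup(\Psi\cap_2 Y)\subseteq L_Y[\bigcup(\Psi\cap_2 Y)]$. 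Minimality of $\langle\Psi\rangle_X$ yields $\langle\Psi\rangle_X\subseteq\text{Cov}'$, which is precisely (ii) and also gives $(\bigcup\langle\Psi\rangle_X)\cap_1 Y\subseteq L_Y[\bigcup(\Psi\cap_2 Y)]$. The reverse inclusion is automatic since $(\bigcup\langle\Psi\rangle_X)\cap_1 Y$ is itself a complete ring in $Y$ containing $\bigcup(\Psi\cap_2 Y)$; thus $(\bigcup\langle\Psi\rangle_X)\cap_1 Y=L_Y[\bigcup(\Psi\cap_2 Y)]$.

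I next verify that for any complete ring $\mc{C}$ in $Y$, $\text{EssFin}(\mc{C})$ is a generalized topology in $Y$ with $\bigcup\text{EssFin}(\mc{C})=\mc{C}$ (Axioms (i)--(v) reduce to routine manipulations of finite refining subfamilies). Applied to $\mc{C}=L_Y[\bigcup(\Psi\cap_2 Y)]$, this produces a generalized topology in $Y$ containing $\Psi\cap_2 Y$, because each $\mc{U}\cap_1 Y$ with $\mc{U}\in\Psi$ inherits essential finiteness from $\mc{U}$. By minimality $\langle\Psi\cap_2 Y\rangle_Y\subseteq\text{EssFin}(L_Y[\bigcup(\Psi\cap_2 Y)])$. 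The reverse inclusion comes from Axiom (iv): $\bigcup\langle\Psi\cap_2 Y\rangle_Y$ is a complete ring containing $\bigcup(\Psi\cap_2 Y)$ and therefore contains $L_Y[\bigcup(\Psi\cap_2 Y)]$, so any $\mc{U}\in\text{EssFin}(L_Y[\ldots])$ together with its finite refining subfamily belongs to $\langle\Psi\cap_2 Y\rangle_Y$. Simultaneously, $\bigcup\langle\Psi\cap_2 Y\rangle_Y=L_Y[\bigcup(\Psi\cap_2 Y)]$, giving the second equality in (i).

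It remains to prove $\text{EssFin}(\bigcup\langle\Psi\rangle_X)\cap_2 Y=\text{EssFin}(L_Y[\bigcup(\Psi\cap_2 Y)])$. The inclusion $\subseteq$ is immediate from the identification of the second paragraph: for $\mc{W}\in\text{EssFin}(\bigcup\langle\Psi\rangle_X)$ with finite witness $\mc{W}'\subseteq\mc{W}$, all members of $\mc{W}\cap_1 Y$ lie in $L_Y[\bigcup(\Psi\cap_2 Y)]$, and $\mc{W}'\cap_1 Y$ is a finite subfamily of $\mc{W}\cap_1 Y$ with the same union. For $\supseteq$, given $\mc{A}\in\text{EssFin}(L_Y[\bigcup(\Psi\cap_2 Y)])$ with finite witness $\mc{A}'=\{A_1,\ldots,A_n\}$, I use only finite choice, which is legal in \textbf{ZF}, to pick $V_i\in\bigcup\langle\Psi\rangle_X$ with $V_i\cap Y=A_i$, set $V=\bigcup_{i=1}^n V_i\in\bigcup\langle\Psi\rangle_X$, and then define by comprehension
\[
\mc{W}=\{V'\cap V:V'\in\bigcup\langle\Psi\rangle_X,\ V'\cap Y\in\mc{A}\}.
\]
One checks that $\mc{W}\subseteq\bigcup\langle\Psi\rangle_X$, that every $W\in\mc{W}$ lies in $V$ while $V_i=V_i\cap V\in\mc{W}$ for each $i$, so $\bigcup\mc{W}=V$ and $\{V_1,\ldots,V_n\}$ is a finite witness of essential finiteness of $\mc{W}$, and finally that $\mc{W}\cap_1 Y=\mc{A}$: for every $A\in\mc{A}$ some $V'$ with $V'\cap Y=A$ exists, and $(V'\cap V)\cap Y=A\cap(V\cap Y)=A\cap\bigcup\mc{A}'=A$ since $A\subseteq\bigcup\mc{A}=\bigcup\mc{A}'$. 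This completes (i).
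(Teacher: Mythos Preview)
Your proof is correct but follows a genuinely different route from the paper's. The paper proves (i) first and derives (ii) as a consequence: using Proposition~2.2.37 of \cite{Pie1} it obtains $\langle\Psi\rangle_X\cap_2 Y\subseteq\langle\Psi\cap_2 Y\rangle_Y$, and once (i) shows the latter equals $\text{EssFin}(L_Y[\bigcup(\Psi\cap_2 Y)])$, essential finiteness on $Y$ of every family in $\langle\Psi\rangle_X$ follows immediately. For (i) itself the paper simply names the four collections $\mc{G}_0,\mc{G}_1,\mc{G}_2,\mc{G}_3$, notes that each is a generalized topology in $Y$ (citing Proposition~2.2.53 of \cite{Pie1} for $\mc{G}_3=\text{EssFin}(\bigcup\langle\Psi\rangle_X)\cap_2 Y$), and establishes a short cycle of inclusions. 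In particular, for the inclusion you singled out as delicate, the paper argues only that $\bigcup(\langle\Psi\rangle_X\cap_2 Y)$ is a complete ring containing $\bigcup(\Psi\cap_2 Y)$, hence $L_Y[\ldots]\subseteq\bigcup\mc{G}_3$, and then the general fact $\text{EssFin}(\text{Op})\subseteq\text{Cov}$ (axioms (i) and (iv)) gives $\mc{G}_1\subseteq\mc{G}_3$ without any explicit lift.

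You reverse the logical order, obtaining (ii) first by constructing the auxiliary generalized topology $\text{Cov}'$ on $X$ and invoking minimality of $\langle\Psi\rangle_X$; this simultaneously yields $(\bigcup\langle\Psi\rangle_X)\cap_1 Y=L_Y[\bigcup(\Psi\cap_2 Y)]$, which the paper never isolates. Your treatment of the last equality via the comprehension-defined family $\mc{W}$ is an explicit, choice-free construction of a preimage in $\text{EssFin}(\bigcup\langle\Psi\rangle_X)$, whereas the paper sidesteps this entirely by leaning on the cited result that $\mc{G}_3$ is already a generalized topology. The paper's argument is considerably shorter because it outsources the structural work to \cite{Pie1}; your argument is self-contained, does not invoke Propositions~2.2.37 or 2.2.53, and makes the \textbf{ZF}-legality of each step visible, which is in the spirit of the paper's overall concern with avoiding hidden choice.
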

\begin{proof} By applying Proposition 2.2.37 of \cite{Pie1} to the mapping $\text{id}_Y: Y\to X$, we obtain the inclusion $\langle \Psi\rangle_X\cap_2 Y\subseteq \langle\Psi\cap_2 Y\rangle_Y$ which, together with $(i)$, implies $(ii)$. To prove $(i)$, let us put $\mc{G}_0=\langle\Psi\cap_2 Y\rangle_Y, \mc{G}_1=\text{EssFin}( L_Y[\bigcup(\Psi\cap_2 Y)]), \mc{G}_2=\text{EssFin}(\bigcup\mc{G}_0)$ and $\mc{G}_3=\text{EssFin}(\bigcup\langle \Psi\rangle_X)\cap_2 Y$. Obviously, $\mc{G}_0, \mc{G}_1$ and $\mc{G}_2$ are generalized topologies in $Y$. By Proposition 2.2.53 of \cite{Pie1}, the collection $\mc{G}_3$ is also a generalized topology in $Y$. Since $\Psi\cap_2 Y\subseteq \mc{G}_1$ and $L_Y[\bigcup(\Psi\cap_2 Y)]\subseteq \bigcup\mc{G}_0$, we have $\mc{G}_0\subseteq \mc{G}_1\subseteq\mc{G}_2\subseteq\mc{G}_0$. It follows from the inclusion $\langle \Psi\rangle_X\cap_2 Y\subseteq \mc{G}_0$ that $\mc{G}_3\subseteq \mc{G}_0$. 
Since $\bigcup (\langle\Psi\rangle_X \cap_2 Y)$   is a complete ring of subsets of $Y$, we get  $\mc{G}_1\subseteq \mc{G}_3$. This completes our proof to $(i)$.  
\end{proof}

\begin{defi} If $X=(X, \text{Op}, \text{Cov})$ is a gts, then:
\begin{enumerate}
\item[(i)] \textbf{the partial topologization of} $X$  is the gts $X_{pt}=(X, (\text{Op})_{pt}, (\text{Cov})_{pt})$ where $(\text{Op})_{pt}=\tau(\text{Op})$ and $(\text{Cov})_{pt}=\langle \text{Cov}\cup\text{EssFin}(\tau(\text{Op}))\rangle_X$ (cf. Definition 4.1 of \cite{PW});
\item[(ii)] the gts $X$ is called \textbf{partially topological} if $X=X_{pt}$ (cf. Definition 2.2.4 of \cite{Pie1});
\item[(iii)] $\mathbf{GTS}_{pt}$ is the category of all partially topological spaces and strictly continuous mappings, while the mapping $pt:\mathbf{GTS}\to\mathbf{GTS}_{pt}$ is \textbf{the functor of partial topologization} defined by: $pt(X)=X_{pt}$ for every gts $X$ and $pt(f)=f$ for every morphism in $\mathbf{GTS}$ (cf. \cite{AHS}, \cite{Pie1} and Definition 4.2 of \cite{PW}).
\end{enumerate}
\end{defi}

\begin{prop}\label{bornologie-pt}
Let $X$ be a gts. Then $\mathbf{Sm}(X)=\mathbf{Sm}(X_{pt})$, $\mathbf{CB}(X)=\mathbf{CB}(X_{pt})$ and $\mathbf{ACB}(X_{pt})\subseteq\mathbf{ACB}(X)$.
\end{prop}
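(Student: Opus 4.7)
The plan is to address the three claims in increasing order of difficulty. The equality of compact bornologies is essentially a syntactic observation about topologizations; the inclusion of admissibly compact bornologies follows from one containment between collections of admissible covers; and the equality of small bornologies rests on the preceding proposition about the generalized topology generated by $\Psi\subseteq\mc{P}^2(X)$.

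First I verify that $(X_{pt})_{top}=X_{top}$. By definition $(X_{pt})_{top}=(X,\tau(\tau(\text{Op}_X)))$, and since $\tau(\text{Op}_X)$ is already a topology, $\tau(\tau(\text{Op}_X))=\tau(\text{Op}_X)$. Hence the two topologizations coincide, their compact subsets agree, and $\mathbf{CB}(X)=\mathbf{CB}(X_{pt})$. Next, from the very definition $(\text{Cov})_{pt}=\langle\text{Cov}\cup\text{EssFin}(\tau(\text{Op}))\rangle_X$ one reads off $\text{Cov}_X\subseteq(\text{Cov})_{pt}$. Let $C$ be admissibly compact in $X_{pt}$ and let $\mc{U}\in\text{Cov}_X$ with $C\subseteq\bigcup\mc{U}$. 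Then $\mc{U}\in(\text{Cov})_{pt}$, so a finite subfamily of $\mc{U}$ covers $C$, meaning $C$ is admissibly compact in $X$. Since both collections are closed under taking subsets, this yields $\mathbf{ACB}(X_{pt})\subseteq\mathbf{ACB}(X)$.

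For the small bornology, one inclusion $\mathbf{Sm}(X_{pt})\subseteq\mathbf{Sm}(X)$ is immediate from the same containment $\text{Cov}_X\subseteq(\text{Cov})_{pt}$. The converse is the only delicate step. Given $K\in\mathbf{Sm}(X)$, I would set $\Psi=\text{Cov}\cup\text{EssFin}(\tau(\text{Op}))$ and observe that every family in $\text{Cov}$ is essentially finite on $K$ by the smallness of $K$, while every family in $\text{EssFin}(\tau(\text{Op}))$ is essentially finite on $X$ and therefore on $K$. Applying the preceding proposition with this $\Psi$ and $Y=K$, its conclusion (ii) states that every family in $\langle\Psi\rangle_X=(\text{Cov})_{pt}$ is essentially finite on $K$; that is, $K\in\mathbf{Sm}(X_{pt})$.

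The main obstacle, and the reason the $\mathbf{Sm}$ clause is not merely an observation, is the need to propagate essential finiteness on $K$ through all the closure operations that build $(\text{Cov})_{pt}$ from its generators (finite unions and intersections, refinement, admissible recomposition). Packaging exactly this propagation is the content of the preceding proposition, so once it is invoked the argument collapses to the short verification above; without it, one would be forced to redo a transfinite induction over the construction of $\langle\Psi\rangle_X$.
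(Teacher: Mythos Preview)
Your argument is correct and follows essentially the same route as the paper: the paper likewise declares $\mathbf{CB}(X)=\mathbf{CB}(X_{pt})$, $\mathbf{Sm}(X_{pt})\subseteq\mathbf{Sm}(X)$ and $\mathbf{ACB}(X_{pt})\subseteq\mathbf{ACB}(X)$ trivial, and for the remaining inclusion sets $\Psi=\text{Cov}_X\cup\text{EssFin}(\tau(\text{Op}_X))$ and invokes Proposition~9.15(ii) with $Y\in\mathbf{Sm}(X)$ exactly as you do. Your additional explanations (that $(X_{pt})_{top}=X_{top}$ because $\tau(\tau(\text{Op}_X))=\tau(\text{Op}_X)$, and the explicit use of $\text{Cov}_X\subseteq(\text{Cov})_{pt}$) merely unpack what the paper calls trivial.
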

\begin{proof}
The equality $\mathbf{CB}(X)=\mathbf{CB}(X_{pt})$ and both the inclusions $\mathbf{Sm}(X_{pt})\subseteq\mathbf{Sm}(X)$ and $\mathbf{ACB}(X_{pt})\subseteq\mathbf{ACB}(X)$ are trivial. Let $X=(X, \text{Op}_X, \text{Cov}_X)$ and let $\Psi=\text{Cov}_X\cup\text{EssFin}(\tau(\text{Op}_X))$. Suppose that $Y\in\mathbf{Sm}(X)$. Since each family from $\Psi$ is essentially finite on $Y$, we infer from Proposition 9.15 that $Y\in\mathbf{Sm}(X_{pt})$.
\end{proof}

\begin{defi}
\textbf{A generalized bornological universe} is an ordered pair $((X,\text{Op}_X, \text{Cov}_X), \mc{B})$ such that $(X,\text{Op}_X, \text{Cov}_X)$ is a gts, while $\mc{B}$ is a bornology in $X$.
\end{defi}

\begin{defi}[cf. Proposition 2.2.71 of \cite{Pie1}] Let $\text{Op}_X$ be a complete ring of subsets of a set $X$. Then:
\begin{enumerate}
\item[(i)] for a collection $\mc{B}\subseteq \mc{P}(X)$, we define
$$\text{EF}(\text{Op}_X, \mc{B})=\{\mc{U}\subseteq\text{Op}_X: \forall 
_{A\in\mc{B}} \{A\cap U: U\in\mc{U}\}\in\text{EssFin}(\mc{P}(A))\};$$
\item[(ii)] for a topology $\tau$ in $X$ and for a bornology $\mc{B}$  in $X$, 
\textbf{the gts induced by the bornological universe} $((X, \tau), \mc{B})$ is the triple $\text{gts}((X, \tau),\mc{B})=(X, \tau,\text{EF}(\tau, \mc{B}))$.
\end{enumerate}
\end{defi}

In the light of the proof to Proposition 2.1.31 in \cite{Pie2}, we have the following fact:

\begin{f} Suppose that $((X, \tau), \mc{B})$ is a bornological universe such that $\tau\cap\mc{B}$ is a base for $\mc{B}$. Then $\mathbf{Sm}((X, \tau, \text{EF}(\tau, \mc{B})))=\mc{B}$.
\end{f}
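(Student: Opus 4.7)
The plan is to unpack both sides to statements about essential finiteness on sets, show the easy inclusion directly from the definition of $\text{EF}(\tau,\mc{B})$, and then, for the nontrivial inclusion, test smallness against the single canonical family $\mc{U}_\ast=\tau\cap\mc{B}$. First I will translate the condition $\{A\cap U:U\in\mc{U}\}\in\text{EssFin}(\mc{P}(A))$ in Definition 9.19(i) into the statement that $\mc{U}$ is essentially finite on $A$ in the sense of Definition 9.5; so membership of $\mc{U}$ in $\text{EF}(\tau,\mc{B})$ means exactly that $\mc{U}\subseteq\tau$ and $\mc{U}$ is essentially finite on every member of $\mc{B}$. Likewise, $K\in\mathbf{Sm}((X,\tau,\text{EF}(\tau,\mc{B})))$ means every family in $\text{EF}(\tau,\mc{B})$ is essentially finite on $K$.

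The inclusion $\mc{B}\subseteq\mathbf{Sm}((X,\tau,\text{EF}(\tau,\mc{B})))$ is then immediate: if $K\in\mc{B}$, the defining property of $\text{EF}(\tau,\mc{B})$ applied with $A=K$ guarantees that every $\mc{U}\in\text{EF}(\tau,\mc{B})$ is essentially finite on $K$.

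For the reverse inclusion, I will argue that $\mc{U}_\ast=\tau\cap\mc{B}$ itself lies in $\text{EF}(\tau,\mc{B})$. Given $A\in\mc{B}$, the base assumption supplies some $U_0\in\tau\cap\mc{B}$ with $A\subseteq U_0$; then the finite subfamily $\{U_0\}\subseteq\mc{U}_\ast$ satisfies $A\cap\bigcup\mc{U}_\ast\subseteq A=A\cap U_0$, so $\mc{U}_\ast$ is essentially finite on $A$. Now suppose $K\in\mathbf{Sm}((X,\tau,\text{EF}(\tau,\mc{B})))$. Since $\mc{B}$ is a \emph{bornology} (so contains all singletons) and $\tau\cap\mc{B}$ is a base for it, every point of $X$ lies in some member of $\mc{U}_\ast$; hence $\bigcup\mc{U}_\ast=X$. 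Applying smallness of $K$ to $\mc{U}_\ast$, I get a finite $\mc{V}\subseteq\tau\cap\mc{B}$ with $K=K\cap\bigcup\mc{U}_\ast\subseteq\bigcup\mc{V}$, and since $\mc{B}$ is closed under finite unions and downward closed, $K\in\mc{B}$.

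The only mildly delicate point, which I would flag as the main obstacle, is the verification that $\bigcup\mc{U}_\ast=X$: it is not provided by the base property alone (bases for bornologies a priori only cover bounded sets), and crucially uses that singletons are bounded. Everything else is a matter of unwinding the definition of $\text{EssFin}(\mc{P}(A))$ and using the fact that a bornology is an ideal.
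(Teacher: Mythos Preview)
Your proof is correct. The paper itself does not give a proof of this fact, merely citing the proof of Proposition~2.1.31 in \cite{Pie2}; your argument is the natural one and almost certainly coincides with what is found there: the easy inclusion is immediate from the definition of $\text{EF}(\tau,\mc{B})$, and the converse is obtained by testing smallness against the single admissible family $\tau\cap\mc{B}$, which covers $X$ precisely because $\mc{B}$ is a bornology and $\tau\cap\mc{B}$ is a base for it.
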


\begin{defi}[cf. Example 2.1.12 of \cite{Pie2}]
 For a (quasi)-metric $d$ on a set $X$, the triple $(X, \tau(d), \text{EF}(\tau(d), \mc{B}_d(X)))$ will be called \textbf{ the gts induced by the (quasi)-metric} $d$.
\end{defi}
 
\begin{f}[cf. Example 2.1.12 of \cite{Pie2}]
 If $d$ is a quasi-metric on a set $X$,  then $ \text{EF}(\tau(d), \mc{B}_d(X))$ is a generalized topology in $X$ and  
 $$\mathbf{Sm}((X, \text{EF}(\tau(d), \mc{B}_d(X)) )=\mc{B}_d(X).$$
\end{f}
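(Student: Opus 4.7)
The plan is to deduce both claims from results already established in the paper, splitting the argument naturally into two parts corresponding to the two conjuncts of the fact.

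For the claim that $\text{EF}(\tau(d), \mc{B}_d(X))$ is a generalized topology, I would first note that $\tau(d)$ is itself a topology, hence in particular a complete ring of subsets of $X$, so the construction in Definition 9.22(i) applies. This is precisely the situation covered by the referenced Proposition 2.2.71 of \cite{Pie1}. Alternatively, one can verify the five axioms of Definition 9.1 directly: axioms (i), (ii) and (v) follow at once because $\tau(d)$ already satisfies them as a topology, while essential finiteness on each fixed bounded set $A\in\mc{B}_d(X)$ is preserved under restriction, intersection and passage to finite subfamilies; axioms (iii) and (iv) reduce to the observation that if $\mc{U}$ has a finite subfamily $\mc{U}_0$ witnessing essential finiteness on $A$, and each $U\in\mc{U}_0$ is covered by a family $\mc{V}(U)$ with a finite witness $\mc{V}_0(U)$ on $A$, then $\bigcup_{U\in\mc{U}_0}\mc{V}_0(U)$ witnesses essential finiteness of $\bigcup_{U\in\mc{U}}\mc{V}(U)$ on $A$.

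For the equality $\mathbf{Sm}((X, \text{EF}(\tau(d), \mc{B}_d(X))))=\mc{B}_d(X)$, the plan is to invoke the preceding Fact with $\tau=\tau(d)$ and $\mc{B}=\mc{B}_d(X)$. The only hypothesis to verify is that $\tau(d)\cap\mc{B}_d(X)$ is a base for $\mc{B}_d(X)$. This is immediate: every open ball $B_d(x,r)$ lies in $\tau(d)$, since if $y\in B_d(x,r)$ then setting $\epsilon=r-d(x,y)>0$ the triangle inequality gives $B_d(y,\epsilon)\subseteq B_d(x,r)$, hence $B_d(y,\tfrac{1}{2^n})\subseteq B_d(x,r)$ for $n$ large enough; and $B_d(x,r)$ is trivially $d$-bounded. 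Since every member of $\mc{B}_d(X)$ is by definition contained in such a ball, the family $\tau(d)\cap\mc{B}_d(X)$ is indeed a base for $\mc{B}_d(X)$, and the Fact applies.

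No substantial obstacle is anticipated: each step either invokes a previously established result (Proposition 2.2.71 of \cite{Pie1} or the preceding Fact) or is a routine manipulation of essentially finite families. The only mild point of care is confirming that $d$-balls are genuinely $\tau(d)$-open in the quasi-metric setting rather than just in the metric one, but as shown above this follows from the triangle inequality exactly as in the symmetric case.
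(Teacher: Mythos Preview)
Your proposal is correct. Note that the paper itself does not supply a proof of this statement: it is recorded as a Fact with the citation ``cf.\ Example 2.1.12 of \cite{Pie2}'', just as the immediately preceding Fact is attributed to Proposition 2.1.31 of \cite{Pie2}. Your argument is exactly the one the paper's organization invites: you reduce the equality $\mathbf{Sm}((X,\text{EF}(\tau(d),\mc{B}_d(X))))=\mc{B}_d(X)$ to the preceding Fact by checking that open $d$-balls are $\tau(d)$-open and $d$-bounded, hence that $\tau(d)\cap\mc{B}_d(X)$ is a base for $\mc{B}_d(X)$; and you handle the claim that $\text{EF}(\tau(d),\mc{B}_d(X))$ is a generalized topology either by citing Proposition 2.2.71 of \cite{Pie1} (the same reference the paper attaches to Definition 9.19) or by the straightforward axiom check you sketch. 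Nothing further is needed.
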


\section {$\mc{B}$-(quasi)-metrization of gtses}

\begin{defi} Suppose that $(X, \mc{B})$ is a generalized bornological universe. Then we say  that the gts $X$ is \textbf{$\mc{B}$-(quasi)-metrizable} or \textbf{(quasi)-metrizable with respect to $\mc{B}$} if the bornological universe $(X_{top}, \mc{B})$ is (quasi)-metrizable. 
\end{defi}

\begin{defi} 
Let $X$ be a gts and let $\mc{S}$ be either $\mathbf{CB}$ or $\mathbf{ACB}$, or $\mathbf{Sm}$. Then we say that $X$ is $\mc{S}$-(quasi)-metrizable if $X$ is (quasi)-metrizable with respect to $\mc{S}(X)$.
\end{defi}

With Proposition \ref{bornologie-pt} in hand, we can immediately deduce that the following proposition holds:

\begin{prop} Let $X$ be a gts and let $\mc{S}$ be either $\mathbf{CB}$ or $\mathbf{Sm}$. Then $X$ is $\mc{S}$-(quasi)-metrizable if and only if $X_{pt}$ is $\mc{S}$-(quasi)-metrizable. 
\end{prop}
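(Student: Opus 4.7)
The plan is to reduce the statement to a direct comparison of bornological universes, exploiting the definition of $\mc{S}$-(quasi)-metrizability together with Proposition \ref{bornologie-pt}.

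First I would unwind Definition 10.1 applied to $\mc{B}=\mc{S}(X)$: the gts $X$ is $\mc{S}$-(quasi)-metrizable precisely when the bornological universe $(X_{top}, \mc{S}(X))$ is (quasi)-metrizable in the sense of Hu, and similarly $X_{pt}$ is $\mc{S}$-(quasi)-metrizable precisely when $((X_{pt})_{top}, \mc{S}(X_{pt}))$ is (quasi)-metrizable. The goal is therefore to show that these two bornological universes are literally equal.

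For the underlying topological spaces, write $X=(X,\text{Op}_X,\text{Cov}_X)$; then $(\text{Op})_{pt}=\tau(\text{Op}_X)$ by clause (i) of Definition 9.16, so
\[
(X_{pt})_{top}=(X,\tau((\text{Op})_{pt}))=(X,\tau(\tau(\text{Op}_X)))=(X,\tau(\text{Op}_X))=X_{top}.
\]
For the bornologies, Proposition \ref{bornologie-pt} gives $\mathbf{Sm}(X)=\mathbf{Sm}(X_{pt})$ and $\mathbf{CB}(X)=\mathbf{CB}(X_{pt})$, so in both permitted choices of $\mc{S}$ we have $\mc{S}(X)=\mc{S}(X_{pt})$. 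Combining these two observations yields $(X_{top},\mc{S}(X))=((X_{pt})_{top},\mc{S}(X_{pt}))$, and (quasi)-metrizability of one is equivalent to (quasi)-metrizability of the other.

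There is no real obstacle here; the work has all been done in Proposition \ref{bornologie-pt}. The only thing worth emphasizing in the write-up is why the result is not stated for $\mc{S}=\mathbf{ACB}$: Proposition \ref{bornologie-pt} only gives the one-sided inclusion $\mathbf{ACB}(X_{pt})\subseteq\mathbf{ACB}(X)$, so the above identification of bornological universes can fail in that case, and no analogous equivalence is available by this route.
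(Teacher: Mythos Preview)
Your proof is correct and follows exactly the route the paper intends: the paper does not give a separate proof but simply notes that the proposition follows immediately from Proposition~\ref{bornologie-pt}, and you have spelled out the two implicit ingredients, namely $(X_{pt})_{top}=X_{top}$ and $\mc{S}(X)=\mc{S}(X_{pt})$. Your closing remark about $\mathbf{ACB}$ is also on point and mirrors the paper's own Remark~10.4.
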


\begin{rem} If $X$ is a gts, then the $\mathbf{ACB}$-(quasi)-metrizability of $X_{pt}$ is the (quasi)-metrizability of $X_{pt}$ with respect to $\mathbf{ACB}(X_{pt})$, while the $\mathbf{ACB}$-quasi-metrizability of $X$ is equivalent to the (quasi)-metrizability of $X_{pt}$ with respect to $\mathbf{ACB}(X)$. We do not know whether the $\mathbf{ACB}$-(quasi)-metrizability of $X$ is equivalent to the $\mathbf{ACB}$-(quasi)-metrizability of $X_{pt}$.
\end{rem} 

\begin{defi} 
A gts $X=(X, \text{Op}_X, \text{Cov}_X)$ is called:
\begin{enumerate}
\item[(i)] \textbf{locally small} if there exists $\mc{U}\in\text{Cov}_X$ such that $\mc{U}\subseteq \mathbf{Sm}(X)$ and $X=\bigcup\mc{U}$ (cf. Definition 2.1.1 of \cite{Pie2});
\item[(ii)] \textbf{weakly locally small} if there exists a collection $\mc{U}\subseteq \text{Op}_X\cap\mathbf{Sm}(X)$ such that $X=\bigcup\mc{U}$.
\end{enumerate}
\end{defi}

Our next theorem says about the form of the partial topologization of an $\mathbf{Sm}$-(quasi)-metrizable gts $X$ when $X_{pt}$ is locally small. 

\begin{thm} Suppose that $X=(X, \text{Op}, \text{Cov})$ is a gts such that its partial topologization $X_{pt}=(X,\text{Op}_{pt}, \text{Cov}_{pt})$ is locally small. Then the following conditions are equivalent:
\begin{enumerate}
\item[(i)]$X$ is $\mathbf{Sm}$-(quasi)-metrizable;
\item[(ii)] $X_{pt}$ is induced by some (quasi)-metric $d$.
\end{enumerate}
\end{thm}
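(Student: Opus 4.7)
My plan is to reduce both implications to the identity
\[
\text{Cov}_{pt} = \text{EF}(\text{Op}_{pt}, \mathbf{Sm}(X_{pt})),
\]
which I will verify under the local-smallness hypothesis. Once this identity is in hand, (ii)~$\Rightarrow$~(i) becomes immediate: if $X_{pt}$ is induced by a (quasi)-metric $d$, then $X_{top} = (X_{pt})_{top} = (X, \tau(d))$, and by Fact~9.22 together with Proposition~\ref{bornologie-pt} one has $\mathbf{Sm}(X) = \mathbf{Sm}(X_{pt}) = \mc{B}_d(X)$, so the bornological universe $((X,\tau(d)), \mc{B}_d(X))$ is (quasi)-metrizable by $d$ itself. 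Conversely, for (i)~$\Rightarrow$~(ii), a (quasi)-metric $d$ witnessing the (quasi)-metrizability of $(X_{top}, \mathbf{Sm}(X))$ will satisfy $\tau(d) = \tau(\text{Op}_X) = \text{Op}_{pt}$ and $\mc{B}_d(X) = \mathbf{Sm}(X) = \mathbf{Sm}(X_{pt})$; the displayed identity will then show that the gts induced by $d$, namely $(X, \tau(d), \text{EF}(\tau(d), \mc{B}_d(X)))$, coincides with $X_{pt}$.

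To prove the identity, the inclusion $\text{Cov}_{pt} \subseteq \text{EF}(\text{Op}_{pt}, \mathbf{Sm}(X_{pt}))$ is the defining property of small sets: any $\mc{U} \in \text{Cov}_{pt}$ is essentially finite on each $A \in \mathbf{Sm}(X_{pt})$ by Definition~9.6, so $\{U \cap A : U \in \mc{U}\}$ is an essentially finite subfamily of $\mc{P}(A)$. For the reverse inclusion, let $\mc{U} \in \text{EF}(\text{Op}_{pt}, \mathbf{Sm}(X_{pt}))$ and set $W = \bigcup \mc{U}$. Local smallness of $X_{pt}$ provides $\mc{V} \in \text{Cov}_{pt}$ with $\mc{V} \subseteq \mathbf{Sm}(X_{pt})$ and $\bigcup \mc{V} = X$. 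Axiom (ii) of a gts applied to $W \in \text{Op}_{pt}$ gives $\mc{V}\cap_1 W \in \text{Cov}_{pt}$; for each $V \in \mc{V}$, smallness of $V$ yields a finite $\mc{U}_V \subseteq \mc{U}$ with $V \cap W \subseteq \bigcup \mc{U}_V$, and then the finite family $\{V \cap W \cap U : U \in \mc{U}_V\}$ lies in $\text{Cov}_{pt}$ by axiom (i) and covers $V \cap W$. Patching these finite covers by means of axiom (iii) produces a family $\mc{W}^{\ast} \in \text{Cov}_{pt}$ with $\bigcup \mc{W}^{\ast} = W = \bigcup \mc{U}$, each of whose members is contained in some $U \in \mc{U}$; axiom (iv) then delivers $\mc{U} \in \text{Cov}_{pt}$, as required.

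The main obstacle is precisely this patching step: the definition of $\text{Cov}_{pt}$ as the generated generalized topology $\langle \text{Cov}_X \cup \text{EssFin}(\tau(\text{Op}_X)) \rangle_X$ is not directly constructive, and only the local-smallness hypothesis on $X_{pt}$ lets one replace that abstract generation by the explicit description in terms of essentially finite traces on small sets. Everything outside this lemma is bookkeeping, plus a single appeal to Fact~9.22 to convert between $\mathbf{Sm}$ of an induced gts and the $d$-bounded sets.
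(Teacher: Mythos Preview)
Your proof is correct in outline, and your key identity $\text{Cov}_{pt} = \text{EF}(\text{Op}_{pt}, \mathbf{Sm}(X_{pt}))$ for locally small $X_{pt}$ is precisely what the paper invokes as Proposition~2.1.18 of \cite{Pie2}; so the two arguments agree in substance, the only difference being that you supply a self-contained proof of that cited lemma rather than quoting it.

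One point deserves care in the context of this paper, which works throughout in $\mathbf{ZF}$: your patching step selects, for each $V \in \mc{V}$, a finite subfamily $\mc{U}_V \subseteq \mc{U}$ with $V \cap W \subseteq \bigcup \mc{U}_V$. Since $\mc{V}$ may well be infinite, this simultaneous selection is an appeal to choice. The repair is immediate and does not change your strategy: for each $T \in \mc{V} \cap_1 W$ use instead the canonically defined family $\mc{W}(T) = \{T \cap U : U \in \mc{U}\}$, which is essentially finite (because $T$ lies in a small $V$, hence is itself small) and therefore belongs to $\text{Cov}_{pt}$ by axioms~(i) and~(iv). Axiom~(iii) applied to the assignment $T \mapsto \mc{W}(T)$, followed by axiom~(iv), then yields $\mc{U} \in \text{Cov}_{pt}$ exactly as you intended, with no choice involved.
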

\begin{proof} In view of Proposition \ref{bornologie-pt}, we have $\mathbf{Sm}(X)=\mathbf{Sm}(X_{pt})$. In consequence, it it is obvious that if $X_{pt}$ is induced by a (quasi)-metric $d$, then $X$ is $\mathbf{Sm}$-(quasi)-metrizable.  Assume that $X$ is $\mathbf{Sm}$-(quasi)-metrizable and that $d$ is a (quasi)-metric on $X$ such that $\tau(\text{Op})=\tau(d)$ and $\mathbf{Sm}(X_{pt})$ is the collection of all $d$-bounded sets. Since $X_{pt}$ is locally small, it follows from Proposition 2.1.18 of \cite{Pie2} that $X_{pt}$ is induced by $d$. 
\end{proof} 

\begin{f} If a gts $X$ is induced by a (quasi)-metric, then $X$ is locally small and partially topological.
\end{f}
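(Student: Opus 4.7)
The plan is to verify the two assertions directly from the construction of the gts induced by $d$, namely $X=(X,\tau(d),\text{EF}(\tau(d),\mc{B}_d(X)))$, using Fact 9.24 (which identifies $\mathbf{Sm}(X)$ with $\mc{B}_d(X)$) and the quasi-metric triangle inequality.

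For local smallness, I would fix a base point $x_0\in X$ and consider the countable family $\mc{U}=\{B_d(x_0,n):n\in\omega\}$. Each member is $\tau(d)$-open and is $d$-bounded by definition, hence small by Fact 9.24, and $X=\bigcup\mc{U}$ because for any $y\in X$ there exists $n\in\omega$ with $d(x_0,y)<n$. The one nontrivial point is that $\mc{U}\in\text{EF}(\tau(d),\mc{B}_d(X))$: given any $A\in\mc{B}_d(X)$ with $A\subseteq B_d(x_1,r)$, the inequality $d(x_0,y)\leq d(x_0,x_1)+d(x_1,y)$ gives $A\subseteq B_d(x_0,n_0)$ for any integer $n_0>d(x_0,x_1)+r$, so the singleton $\{B_d(x_0,n_0)\}\subseteq\mc{U}$ already witnesses that $\{A\cap U:U\in\mc{U}\}$ is essentially finite on $A$. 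Thus $\mc{U}\in\text{Cov}_X$, $\mc{U}\subseteq\mathbf{Sm}(X)$, and $X=\bigcup\mc{U}$, which is precisely local smallness.

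For partial topologicality, I would exploit that $\text{Op}_X=\tau(d)$ is already a topology, so $\tau(\text{Op}_X)=\text{Op}_X$; it then suffices to show $\text{EssFin}(\text{Op}_X)\subseteq\text{Cov}_X$. If $\mc{V}\in\text{EssFin}(\tau(d))$ with a finite subfamily $\mc{V}_0\subseteq\mc{V}$ such that $\bigcup\mc{V}_0=\bigcup\mc{V}$, then for any $A\subseteq X$ the same $\mc{V}_0$ dominates $\{A\cap V:V\in\mc{V}\}$ on $A$; hence $\mc{V}\in\text{EF}(\tau(d),\mc{B}_d(X))=\text{Cov}_X$. It follows that $\text{Cov}_X\cup\text{EssFin}(\tau(\text{Op}_X))=\text{Cov}_X$ and, since $\text{Cov}_X$ is already a generalized topology, $\langle\text{Cov}_X\cup\text{EssFin}(\tau(\text{Op}_X))\rangle_X=\text{Cov}_X$, so $X=X_{pt}$.

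I do not expect any real obstacle; the only substantive step is the triangle-inequality observation converting $d$-boundedness from an arbitrary centre $x_1$ into $d$-boundedness from the fixed $x_0$, which is exactly what makes the countable exhaustion by balls centred at $x_0$ admissible inside the induced generalized topology.
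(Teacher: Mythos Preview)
Your argument is correct. The paper records this statement as a Fact without proof, so there is no authorial argument to compare against; your direct verification from the definition of the induced gts $(X,\tau(d),\text{EF}(\tau(d),\mc{B}_d(X)))$ is exactly the intended routine check. Two very small remarks: the fact identifying $\mathbf{Sm}(X)$ with $\mc{B}_d(X)$ is numbered 9.22 in the paper, not 9.24; and you should handle the degenerate case $X=\emptyset$ separately (there is no $x_0$ to fix, but the empty gts is trivially locally small and partially topological).
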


\begin{f} \begin{enumerate} \item[(i)]If $X$ is a locally small gts, then $X_{pt}$ is locally small.
\item[(ii)] If a gts $X$ is such that $X_{pt}$ is locally small, then $X$ is weakly locally small.
\item[(iii)]  A gts $X$ is weakly locally small if and only if $X_{pt}$ is weakly locally small.
\end{enumerate}
\end{f}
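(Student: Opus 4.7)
The proof plan hinges on three easy ingredients that are already available in the paper: first, the equality $\mathbf{Sm}(X)=\mathbf{Sm}(X_{pt})$ established in Proposition \ref{bornologie-pt}; second, the fact that, by the very construction of $X_{pt}$, we have $\text{Op}_X\subseteq\text{Op}_{pt}=\tau(\text{Op}_X)$ and $\text{Cov}_X\subseteq\text{Cov}_{pt}$; third, the observation that every $U\in\text{Op}_{pt}$ is, as an element of $\tau(\text{Op}_X)$, a union of members of $\text{Op}_X$, combined with the standard fact that the bornology $\mathbf{Sm}(X)$ is an ideal and hence closed under taking subsets.

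For (i), I would start with a witness $\mc{U}\in\text{Cov}_X$ with $\mc{U}\subseteq \mathbf{Sm}(X)$ and $\bigcup\mc{U}=X$, and observe that the very same $\mc{U}$ serves as a witness for local smallness of $X_{pt}$: it lies in $\text{Cov}_{pt}$ since $\text{Cov}_X\subseteq\text{Cov}_{pt}$, and it lies in $\mathbf{Sm}(X_{pt})$ by the identification of small bornologies from Proposition \ref{bornologie-pt}.

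For (ii), I would begin with $\mc{U}\in\text{Cov}_{pt}$ with $\mc{U}\subseteq\mathbf{Sm}(X_{pt})$ and $\bigcup\mc{U}=X$, and note that each $U\in\mc{U}$, being an element of $\tau(\text{Op}_X)$, admits a representation $U=\bigcup\mc{V}_U$ with $\mc{V}_U\subseteq\text{Op}_X$. Since $V\subseteq U\in\mathbf{Sm}(X_{pt})=\mathbf{Sm}(X)$ for every $V\in\mc{V}_U$, the bornology property gives $V\in\mathbf{Sm}(X)$. Then $\mc{V}=\bigcup_{U\in\mc{U}}\mc{V}_U\subseteq\text{Op}_X\cap\mathbf{Sm}(X)$ with $\bigcup\mc{V}=X$, which is exactly weak local smallness of $X$. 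Part (iii) is then the symmetric version of the same argument: the forward direction is trivial because any witness $\mc{U}\subseteq\text{Op}_X\cap\mathbf{Sm}(X)$ for weak local smallness of $X$ remains a witness for $X_{pt}$ (via $\text{Op}_X\subseteq\text{Op}_{pt}$ and $\mathbf{Sm}(X)=\mathbf{Sm}(X_{pt})$), and the backward direction is exactly the refinement argument already used in (ii).

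There is essentially no main obstacle here; the whole proof is an easy bookkeeping exercise once one has (a) the invariance of the small bornology under $pt$ and (b) the fact that $\text{Op}_{pt}$ is generated from $\text{Op}_X$ by unions, so every $\text{Op}_{pt}$-open set can be refined by an $\text{Op}_X$-open cover whose pieces inherit smallness. The only thing one should be careful about is not to confuse \emph{locally small} (which requires the witnessing cover to be admissible, i.e.\ a member of $\text{Cov}$) with \emph{weakly locally small} (which only requires the cover to consist of small open sets): this is precisely why the refinement trick in (ii) yields only weak local smallness of $X$ and not full local smallness.
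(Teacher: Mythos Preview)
Your proposal is correct. The paper records this result as a Fact without proof, and your argument is the natural one; the only cosmetic remark worth making in a $\mathbf{ZF}$ context is that in part (ii) the families $\mc{V}_U$ can be taken canonically as $\mc{V}_U=\{V\in\text{Op}_X:V\subseteq U\}$ (or one may simply note that $\text{Op}_X\cap\mathbf{Sm}(X)$ itself covers $X$), so that no choice is invoked when passing to the refinement.
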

We are going to present a pair of weakly locally small but not locally small gtses.
For $\Psi\subseteq \mc{P}^{2}(X)$,  we put $\Psi_0=\Psi$ and, for $n\in\omega$, assuming that the collection $\Psi_{n}\subseteq \mc{P}^2(X)$ has been defined, we put $\Psi_{n+1}=(\Psi_n)^+$ where $^+$ is the operator described in the proof of Proposition 2.2.37 in \cite{Pie1}. Then $\langle\Psi\rangle_X=\bigcup_{n\in\omega}\Psi_n$. The symbols $\cup_1, \cap_1, \cup_2, \cap_2$ have the same meaning as in \cite{Pie1}. 

\begin{exam}$[\mathbf{ZF+CC}]$. Suppose that  $Y$ is an uncountable set. For $n\in\omega$, we put $Y_n=Y\times\{n\}$. Let $X=\bigcup_{n\in\omega} Y_n$, $\text{Op}_X=\mathbf{FB}(X)\cup\{ X\}$ and $\text{Cov}_X=\text{EF}(\text{Op}_X, \{ Y_n: n\in\omega\})$. The gts $X=(X, \text{Op}_X, \text{Cov}_X)$ is weakly locally small and not small. If $X$ were locally small, then  $Y_0$ would be a subset of a small open set (Fact 2.1.21 in \cite{Pie2}), so $Y_0$ would be finite. Hence, $X$ is not locally small. We have $\{ Y_n: n\in \omega\}\in\text{EF}(\tau(\text{Op}_X), \{Y_n: n\in\omega\})$ and all the sets $Y_n$ are small and open in $(X, \text{EF}(\tau(\text{Op}_X), \{Y_n: n\in\omega\}))$, so the gts $(X, \text{EF}(\tau(\text{Op}_X), \{Y_n: n\in\omega\}))$ is locally small. We put $\Psi=\text{Cov}_X\cup \text{EssFin}(\tau(\text{Op}_X))$. Then $pt(\text{Cov}_X)=\langle\Psi\rangle_X$ is the generalized topology of  $X_{pt}$. By Proposition 9.15, $\langle\Psi\rangle_X\subseteq  \text{EF}(\tau(\text{Op}_X), \{Y_n: n\in\omega\}) $.  Surprisingly, if $\mathbf{CC}$ holds, then  $X_{pt}$ is not locally small and, in consequence, $\langle \Psi\rangle_X\subset \text{EF}(\tau(\text{Op}_X), \{Y_n: n\in\omega\})$. To prove this, let us assume $\mathbf{ZF+CC}$. It is easy to observe the following facts:\\
\textbf{Fact 1.} $X\notin [X]^{\leq\omega}\cup_1 \mathbf{Sm}(X)$.\\
\textbf{Fact 2.}   Each $\Psi_n$ is closed with respect to restriction: $\Psi_n\cap_2 A\subseteq \Psi_n$ for $A\subseteq X$. \\
For $\mc{W}\subseteq\mc{P}(X)$, let us consider the following property: 

$\mathbf{P}(\mc{W})$: $\mc{W}$ has an uncountable member and $\mc{W}\subseteq [X]^{\leq\omega}\cup_1 \mathbf{Sm}(X)$.\\
For $n\in\omega$, let $T(n)$ be the statement:

$T(n)$:  if $\mc{W}\in\Psi_n$  has $\mathbf{P}(\mc{W})$, then $\mc{W}$ is essentially finite on $X\setminus A$ for some countable  $A\subseteq X$.\\
We are going to prove by induction that the following fact holds:\\
\textbf{Fact 3.} $T(n)$ is true for each $n\in \omega$.
\begin{proof}
Let $\mc{W}\in\Psi_0$ have property $\mathbf{P}(\mc{W})$.  Then, by Fact 1,  $X\notin\mc{W}$. Thus $\mc{W}\in\text{EssFin}(\tau(\text{Op}_X))$. Hence $T(0)$ holds. Suppose that $T(n)$ is true.  The \textit{finiteness}, \textit{stability}, and \textit{regularity} induction steps from the proof of Proposition 2.2.37 in \cite{Pie1} are obvious.

\textit{Transitivity step.}  Let $\mc{W}\in\Psi_{n+1}$ have property $\mathbf{P}(\mc{W})$. Suppose that $\mc{U}\in{\Psi}_n$ and $\{ \mc{V}(U): U\in\mc{U}\}\subseteq\Psi_{n}$ are such that $\mc{W}=\bigcup_{U\in\mc{U}}\mc{V}(U)$ and, for each $U\in\mc{U}$, we have $U=\bigcup\mc{V}(U)$. Consider any $U\in\mc{U}$.  If every member of $\mc{V}(U)$ is countable, then $U\in[X]^{\leq\omega}$ because $\mathbf{CC}$ holds and $\mc{V}(U)$ is essentially countable. 
Suppose  $\mc{V}(U)$ has an uncountable member. Since $\mc{V}(U)$ has property $\mathbf{P}(\mc{V}(U))$, it follows from the inductive assumption that there is a countable set $A(U)\subseteq X$ such that $\mc{V}(U)$ is essentially finite on $X\setminus A(U)$. Then $U\in [X]^{\leq\omega}\cup_1 \mathbf{Sm}(X)$ and $U$ is uncountable. 
The above implies that $\mc{U}$ has property $\mathbf{P}(\mc{U})$. By the assumption, there is a countable $A\subseteq X$ such that $\mc{U}$ is essentially finite on $X\setminus A$. Let $\mc{U}^{\ast}\subseteq \mc{U}$ be a finite family such that $\bigcup\mc{U}^{\ast}\setminus A=\bigcup\mc{U}\setminus A$. For each $U\in \mc{U}^{\ast}$, the set $U$ is countable or $\mc{V}(U)$ is essentially finite on $U\setminus A(U)$. This implies that there is a countable $A(\mc{W})$ such that $\mc{W}$ is essentially finite on $X\setminus A(\mc{W})$. 

\textit{Saturation step.}  Suppose that there exists $\mc{V}\in\Psi_n$ such that $\bigcup\mc{V}=\bigcup\mc{W}$ and, for each $V\in\mc{V}$, there is $W(V)\in\mc{W}$ such that $V\subseteq W(V)$. 
Since $\mc{W}\subseteq [X]^{\leq\omega}\cup_1\mathbf{Sm}(X)$, we have $\mc{V}\subseteq [X]^{\leq\omega}\cup_1 \mathbf{Sm}(X)$. Since $\mc{W}$ has an uncountable member and $\mc{V}$ is essentially countable, also $\mc{V}$ has an uncountable member and has property $\mathbf{P}(\mc{V})$. By the inductive assumption, there exists a countable $A(\mc{V})$ such that $\mc{V}$ is essentially finite on $X\setminus A(\mc{V})$. Then $\mc{W}$ is essentially finite on $X\setminus A(\mc{V})$, too. 
\end{proof}

Suppose that  $X_{pt}$ is locally small. There exists $\mc{W}\in pt(\text{Cov}_X)$ such that $\mc{W}\subseteq\mathbf{Sm}(X)$ and $X=\bigcup\mc{W}$. Since $X$ is uncountable and $\mc{W}$ is essentially countable, at least one member of $\mc{W}$ is uncountable, so $\mathbf{P}(\mc{W})$ holds true. By Fact 3, there exists a countable $A(\mc{W})$ such that $\mc{W}$ is essentially finite on $X\setminus A(\mc{W})$. Then $X\setminus A(\mc{W})\in\mathbf{Sm}(X)$. This is impossible by Fact 1.
\end{exam}
 The example above is not a solution to the following open problem:
 \begin{p} Is it true in $\mathbf{ZF}$ that if the partial topologization of a gts $X$ is locally small, then so is $X$?
 \end{p}

\begin{prop} Suppose that  $X=(X, \text{Op}_X, \text{Cov}_X)$ is a gts and $\mc{B}$ is a bornology in $X$. Then the following conditions are equivalent:
\begin{enumerate}
\item[(i)] the gts $X$ is (quasi)-metrizable with respect to $\mc{B}$;
\item[(ii)] the gts $(X, \text{EF}(\tau(\text{Op}_X), \mc{B}))$ is $\mathbf{Sm}$-(quasi)-metrizable and $\tau(\text{Op}_X)\cap\mc{B}$ is a base for $\mc{B}$.
\end{enumerate}
\end{prop}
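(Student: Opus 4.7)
The plan is to reduce both implications to Fact 9.21 together with a short computation identifying the topologization of the auxiliary gts $Y=(X,\text{EF}(\tau(\text{Op}_X),\mc{B}))$. First I would check that $\bigcup\text{EF}(\tau(\text{Op}_X),\mc{B})=\tau(\text{Op}_X)$: the inclusion from left to right is immediate from the definition, while for the reverse direction one observes that every singleton family $\{U\}$ with $U\in\tau(\text{Op}_X)$ is trivially essentially finite on every $A\in\mc{B}$, so $\{U\}\in\text{EF}(\tau(\text{Op}_X),\mc{B})$. Since $\tau(\text{Op}_X)$ is already a topology, this yields $Y_{top}=(X,\tau(\text{Op}_X))=X_{top}$.

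For the direction $(ii)\Rightarrow(i)$, under the assumption that $\tau(\text{Op}_X)\cap\mc{B}$ is a base for $\mc{B}$, I would apply Fact 9.21 to the bornological universe $((X,\tau(\text{Op}_X)),\mc{B})$ to conclude $\mathbf{Sm}(Y)=\mc{B}$. Combined with $Y_{top}=X_{top}$, the $\mathbf{Sm}$-(quasi)-metrizability of $Y$ unfolds via Definition 10.1 to exactly the (quasi)-metrizability of $(X_{top},\mc{B})$, which is $(i)$.

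For $(i)\Rightarrow(ii)$, I would start from a (quasi)-metric witnessing the (quasi)-metrizability of $(X_{top},\mc{B})$ and invoke Theorem 4.7 (with $\tau_1=\tau_2=\tau(\text{Op}_X)$) to deduce that $\mc{B}$ is $\tau(\text{Op}_X)$-proper. Then, for each $A\in\mc{B}$, choosing $B\in\mc{B}$ with $\text{cl}_{\tau(\text{Op}_X)}A\subseteq\text{int}_{\tau(\text{Op}_X)}B$ produces an element $\text{int}_{\tau(\text{Op}_X)}B$ of $\tau(\text{Op}_X)\cap\mc{B}$ containing $A$, so $\tau(\text{Op}_X)\cap\mc{B}$ is a base for $\mc{B}$. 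Fact 9.21 again gives $\mathbf{Sm}(Y)=\mc{B}$, and the identity $Y_{top}=X_{top}$ together with the assumed (quasi)-metrizability of $(X_{top},\mc{B})$ shows that $Y$ is $\mathbf{Sm}$-(quasi)-metrizable.

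The only mildly nontrivial point is the description of $\bigcup\text{EF}(\tau(\text{Op}_X),\mc{B})$ as $\tau(\text{Op}_X)$; once that is settled, both directions follow mechanically from Fact 9.21, Theorem 4.7, and the definitions, so I do not anticipate a serious obstacle and the whole argument goes through in \textbf{ZF}.
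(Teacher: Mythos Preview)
Your argument is correct and follows essentially the same route as the paper's own proof: both directions hinge on the identification $\mathbf{Sm}(Y)=\mc{B}$ (via the fact that $\tau(\text{Op}_X)\cap\mc{B}$ is a base for $\mc{B}$) together with Theorem~4.7 to extract that base condition from~(i). Note only that the result you invoke is Fact~9.20, not~9.21 (which is a definition), and that your explicit verification of $Y_{top}=X_{top}$ is exactly what the paper's appeal to Definition~9.19(ii) encodes implicitly.
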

\begin{proof} Assume that $(i)$ holds. Then, by Theorem 4.7, the collection $\tau(\text{Op}_X)\cap\mc{B}$ is a base for $\mc{B}$. It follows from Fact 9.20 that $\mc{B}=\mathbf{Sm}((X, \text{EF}(\tau(\text{Op}_X), \mc{B})))$. In consequence, $(i)$ implies $(ii)$. On the other hand,  we can use Fact 9.20 with  both Definitions 9.19 and 10.1 to infer that $(i)$ follows from $(ii)$.
\end{proof}

\begin{defi} Suppose that $(X, \mc{B})$ is a generalized bornological universe where $X=(X, \text{Op}_X, \text{Cov}_X)$.  Let us say that $X$ is \textbf{strongly $\mc{B}$-(quasi)-metrizable} if there exists a (quasi)-metric $d$ on $X$ such that $\mc{B}$ is the collection of all $d$-bounded sets and $\text{Op}_X=L_X[\{B_d(x, r): x\in X \wedge  r\in (0; +\infty)\}]$.
\end{defi}

\begin{defi} \textbf{A (quasi)-metric gts} is an ordered pair $(X, d)$ where $X=(X,\text{Op}_X, \text{Cov}_X)$ is a gts and $d$ is a (quasi)-metric in $X$ such that $\tau(d)=\tau(\text{Op}_X)$. 
\end{defi}

\begin{defi} Suppose that $(X, d)$ is a (quasi)-metric gts and that $\mc{B}$ is a bornology in $X$.  We say that $(X, d)$ is \textbf{uniformly $\mc{B}$-(quasi)-metrizable} or \textbf{uniformly (quasi)-metrizable with respect to $\mc{B}$} if the bornology $\mc{B}$ is uniformly (quasi)-metrizable with respect to $d$.
\end{defi}

\begin{rem}
For a bornology $\mc{B}$ in a gts $X$, one can find results in the previous sections that deliver necessary and sufficient conditions for $X$ to be (quasi)-metrizable with respect to $\mc{B}$  (see Theorems 4.7 and 4.15, as well as Corollaries 4.10 and 4.16) and for a metric gts $(X, d)$ to be uniformly (quasi)-metrizable with respect to $\mc{B}$ (see Theorems 6.5 and 8.5). 
\end{rem}
 
Let us use the real lines described in Definition 1.2 of \cite{PW} as our illuminating examples for the notions of (uniform) $\mc{B}$-(quasi)-metrizability in the category $\mathbf{GTS}$.  

\begin{exam}
Let $\tau_{nat}$ be the natural topology in $\mathbb{R}$. For $x,y\in \mathbb{R}$, we put $d_n(x,y)=\mid x-y\mid, d_{n, 1}(x,y)=\min\{ d_n(x, y), 1\}$ and 
$$d_n^+(x,y)=d_n(\Phi(x),\Phi(y)) \text{ where } \Phi(x)= \left\{
\begin{array}{ll} e^x, & x<0,\\
1+x, & x\ge 0.\end{array} \right. $$
Moreover, we define  $d^+_{n,1}(x, y)=\min\{d^+_n(x, y), 1\}$. Let us observe that the metrics $d_n$ and $d^+_n$ are equivalent but not uniformly equivalent.
\begin{enumerate} 
\item[(i)] We have $\mc{B}_{d_n}(\mb{R})=\mathbf{CB}_{\tau_{nat}}(\mb{R})$ and $\mc{B}_{d^+_n}(\mb{R})=\mathbf{UB}(\mb{R})$. Let us observe that, for a fixed $\delta\in (0; +\infty)$, there exists $n(\delta)\in\omega$ such that if $C_m=[-m; m]$ for $m\in\omega$ with $m>n(\delta)$,  then $(-\infty; m)\subseteq [C_m]^{\delta}_{d^+_n}$. This, together with Theorem 6.5, implies that $\mc{B}_{d_n}(\mb{R})$ is not uniformly quasi-metrizable with respect to $d^+_n$.  

\item[(ii)] For the usual topological real line $\mb{R}_{ut}$ (cf. Definition 1.2(i) of \cite{PW}), we have $\mathbf{FB}=\mathbf{Sm}\subset \mathbf{CB}=\mathbf{ACB}$ and $\text{int}_{\tau_{nat}}A=\emptyset$ for each $A\in\mathbf{Sm}(\mb{R}_{ut})$,  so the gts $\mb{R}_{ut}$ is not $\mathbf{Sm}$-quasi-metrizable and it is $\mathbf{ACB}$-metrizable by $d_n$. The metric gtses $(\mb{R}_{ut}, d_n)$ and $(\mb{R}_{ut}, d_{n,1})$ are $\mathbf{ACB}$-uniformly metrizable. It follows from (i) that the metric gtses $(\mb{R}_{ut}, d^+_n)$ is $(\mb{R}_{ut}, d^+_{n, 1})$ are not uniformly $\mathbf{ACB}$-quasi-metrizable. 

\item[(iii)] For the real lines $\mb{R}_{lst}$ and $\mb{R}_{lom}$ (cf. Definition 1.2(iv)-(v) of \cite{PW}), we have $pt(\mb{R}_{lom})=\mb{R}_{lst}$ and $\mathbf{Sm}=\mathbf{CB}=\mathbf{ACB}=\mc{B}_{d_n}(\mb{R})$. The metric gtses $(\mb{R}_{lst}, d_n)$ and $(\mb{R}_{lom}, d_n)$ are both uniformly $\mathbf{Sm}$-metrizable; however, none of the metric gtses $(\mb{R}_{lom}, d^+_n)$ and $(\mb{R}_{lst}, d^+_n)$ is uniformly $\mathbf{Sm}$-metrizable (see (i)).

\item[(iv)] For the real lines $\mb{R}_{l^+om}$ and $\mb{R}_{l^{+}st}$ (cf. Definition 1.2(vii)-(viii) of \cite{PW}),  we have $pt(\mb{R}_{l^+om})=\mb{R}_{l^+st}$ and $\mathbf{CB}=\mathbf{CB}_{\tau_{nat}}(\mb{R})\subset \mathbf{Sm}=\mathbf{ACB}=\mc{B}_{d^+_n}(\mb{R})$. Now, it is obvious that both the metric gtses $(\mb{R}_{l^+om}, d^+_n)$ and $(\mb{R}_{l^+st}, d^+_n)$ are uniformly $\mathbf{ACB}$-metrizable by the metric $d^+_n$. The gtses $\mb{R}_{l^+om}$ and $\mb{R}_{l^{+}st}$ are $\mathbf{Sm}$-metrizable. The metric gtses $(\mb{R}_{l^+om}, d_n)$ and $(\mb{R}_{l^+st}, d_n)$ are uniformly $\mathbf{Sm}$-metrizable and uniformly $\mathbf{ACB}$-metrizable by 
$d_u(x,y)=d_{n,1}(x,y)+|\max(y,0)-\max(x,0)|$.

\item[(v)] Let us consider the gtses $\mb{R}_{om}, \mb{R}_{slom}, \mb{R}_{rom}$ and $\mb{R}_{st}$ (cf. Definition 1.2(ii),(iii), (vi) and (x) of \cite{PW}). We have $pt(\mb{R}_{om})= pt(\mb{R}_{slom})= pt(\mb{R}_{rom})=\mb{R}_{st}$ and $\mathbf{CB}\subset \mathbf{Sm}=\mathbf{ACB}=\mc{P}(\mb{R})$. The real lines $\mb{R}_{om}, \mb{R}_{slom}, \mb{R}_{rom}$ and $\mb{R}_{st}$  are $\mathbf{Sm}$-metrizable by the metric $d_{n, 1}$ and they are $\mathbf{CB}$-metrizable by the metric $d_n$.

\item[(vi)] The gts $\mb{R}_{om}$  (cf. Definition 1.2(ii) of \cite{PW}) is strongly $\mathbf{Sm}$-metrizable by $d_{n, 1}$.  
\end{enumerate}
\end{exam}

 In connection with strong $\mathbf{Sm}$-(quasi)-metrizability, let us pose the following open problem:
 \begin{p}
 Find useful simultaneously necessary and sufficient conditions for a gts to be strongly $\mathbf{Sm}$-(quasi)-metrizable. 
 \end{p}
 
It might be helpful to have a look at several simple examples of gtses of type $(X, \text{EF}(\tau, \mc{B}))$ and compare them with Proposition 10.11. 
 
 \begin{exam} (\textbf{Gtses from the Sorgenfrey line}.)  Let us use  the topologies $\tau_{S, r}$ and $\tau_{S, l}$ considered in Example 4.12, as well as the quasi-metrics $\rho_{S}$,  $\rho_{S, 1}$  and $\rho_L$ defined in Example 4.12.
 \begin{enumerate}
 \item[(i)] The gts $(\mb{R}, \text{EF}(\tau_{S, r}, \mathbf{CB}_{\tau_{nat}}(\mb{R})))$ is $\mathbf{Sm}$-quasi-metrizable by the quasi-metric $\rho_{0}$ defined as follows:
  $$ \rho_{0}(x,y)=\left\{ \begin{array}{ll}
 y-x, & x\le y\\
 1+x-y, & x>y.\end{array}\right.$$
\item[(ii)] The gts $(\mb{R}, \text{EF}(\tau_{S, r}, \mathbf{UB}(\mb{R})))$ is $\mathbf{Sm}$-quasi-metrizable by $\rho_S$, while the gts $(\mb{R}, \text{EF}(\tau_{S, r}, \mathbf{LB}(\mb{R})))$ is $\mathbf{Sm}$-quasi-metrizable by $\rho_L$.
 \item[(iii)] The gts $(\mb{R}, \text{EF}(\tau_{S, r}, \mc{P}(\mb{R})))$ is $\mathbf{Sm}$-quasi-metrizable by $\rho_{S, 1}$.
 \item[(iv)] It follows from Theorem 4.7 that the gtses  $(\mb{R}, \text{EF}(\tau_{S, r}, \mathbf{FB}(\mb{R})))$  and $(\mb{R}, \text{EF}(\tau_{nat}, \mathbf{FB}(\mb{R})))$ are not $\mathbf{Sm}$-quasi-metrizable  because $\tau_{S, r}\cap\mathbf{FB}(\mb{R})$ is not a base for $\mathbf{FB}(\mb{R})$. 
 \end{enumerate}
 \end{exam}
 
\begin{exam}(\textbf{Quasi-metric gtses from the Sorgenfrey line}.) We use the same notation as in Example 10.18.
\begin{enumerate}
\item[(i)] The quasi-metric gts $((\mb{R}, \text{EF}(\tau_{S, r}, \mathbf{CB}_{\tau_{nat}}(\mb{R}))), \rho_{S})$ is uniformly $\mathbf{Sm}$-quasi-metrizable by $\rho_{0}$.
\item[(ii)] The quasi-metric gts $((\mb{R}, \text{EF}(\tau_{S, r}, \mathbf{UB}(\mb{R}))), \rho_{0})$ is uniformly $\mathbf{Sm}$-quasi-metrizable by $\rho_{S}$, while the quasi-metric gts $((\mb{R}, \text{EF}(\tau_{S, r}, \mathbf{LB}(\mb{R}))), \rho_{0})$ is uniformly $\mathbf{Sm}$-quasi-metrizable by $\rho_{L}$,
\item[(iii)] The quasi-metric gts $((\mb{R}, \text{EF}(\tau_{S, r}, \mc{P}(\mb{R}))), \rho_0)$ is uniformly $\mathbf{Sm}$-quasi-metrizable by $\min\{ \rho_0, 1\}$.
\end{enumerate}
\end{exam}

\begin{exam} Let us put $J=[0; 1]\times\{0\}$ and $J_{q}=\{q\}\times [0; 1]$. For $S=[0; 1]\cap\mathbb{Q}$, let $X=J\cup\bigcup_{q\in S}J_q$. We consider the collection $\mc{B}$ of all sets $A\subseteq X$ that have the property: there exists a finite $S(A)\subseteq S$ such that $A\subseteq J\cup\bigcup_{q\in S(A)}J_q$.
\begin{enumerate}
\item[(i)] Let $d_e$ be the Euclidean metric in $X$. Then, for each $A\in\mc{B}$, we have $\text{int}_{\tau(d_e)}A=\emptyset$, so, for every topology $\tau_2$ in $X$,  the bornology $\mc{B}$ is not $(\tau(d_e), \tau_2)$-proper. In consequence, the gts $(X,\text{EF}(\tau(d_e), \mc{B}))$ is not $\mathbf{Sm}$-quasi-metrizable.
\item[(ii)] We define another metric $\rho$ in $X$ as follows. For $x, y\in [0; 1]$ and $q, q'\in S$ with $q\neq q'$, we put $\rho((x,0), (y,0))=\mid x-y\mid, \rho((q, x),(q, y))=\mid x-y\mid $ and $\rho((q, x), (q', y))=x+\mid q-q'\mid +y$. Then, for each $q\in S$ and for any $a, b\in [0; 1]$ with $a<b$, we have $\{ q\}\times (a; b)=\text{int}_{\tau(\rho)}[\{q\}\times (a; b)]\in\mc{B}$. Since there does not exist $A\in\mc{B}$ such that $J\subseteq\text{int}_{\tau(\rho)}A$, we deduce that the gts  $(X,\text{EF}(\tau(\rho), \mc{B}))$ is not $\mathbf{Sm}$-quasi-metrizable. The space $(X, \tau(\rho))$ can be called \textbf{the comb with its hand $J$ and teeth $J_q$}, $q\in \mb{Q}$ (compare with Example IV.4.7 of \cite{Kn}).
\end{enumerate}
\end{exam}
\begin{rem}
One can easily reformulate Theorems 4.7 and 4.15 to get simultaneously necessary and sufficient conditions for a bornological biuniverse to be quasi-pseudometrizable. One can also use quasi-pseudometrics instead of quasi-metrics in Theorem 6.5 to obtain conditions equivalent with the uniform quasi-pseudometrizability  of a bornology with respect to a given quasi-pseudometric.
\end{rem}
\begin{exam}
The topological space $(\mb{R}, u)$ is not quasi-metrizable (since it is not $T_1$) but it is quasi-pseudometrizable by $\rho_u$ (see Section 6).  
\begin{enumerate}
\item[(i)] The gts $(\mb{R}, \text{EF}(u,\mathbf{UB}(\mb{R})))$ is $\mathbf{Sm}$-quasi-pseudometrizable by $\rho_u$.
\item[(ii)] For the gts $\mb{R}_{ul}=(\mb{R}, \text{EF}(u,\mathbf{LB}(\mb{R})))$ we have $\mathbf{Sm}(\mb{R}_{ul})=\mc{P}(\mb{R})$.
This is why $\mb{R}_{ul}$ is $\mathbf{Sm}$-quasi-pseudometrizable by $\rho_{u,1}=\min\{1,\rho_u\}$.
\item[(iii)] For the gts $\mb{R}_{ub}=(\mb{R}, \text{EF}(u,\mathbf{UB}(\mb{R})\cap\mathbf{LB}(\mb{R})))$ we have $\mathbf{Sm}(\mb{R}_{ub})=\mathbf{UB}(\mb{R})$. This is why $\mb{R}_{ub}$
 is $\mathbf{Sm}$-quasi-pseudometrizable by $\rho_u$. 
\item[(iv)] The gts $\mb{R}_{uf}=(\mb{R}, \text{EF}(u, \mathbf{FB}(\mb{R})))$ is not $\mathbf{LB}(\mb{R})$-quasi-pseudometrizable because $\text{int}_uA=\emptyset$ for each $A\in\mathbf{LB}(\mb{R})$. Here $\mathbf{Sm}(\mb{R}_{uf})$ is the collection of all sets $A\in\mathbf{UB}(\mb{R})$ such that every non-empty subset of $A$ has a maximal element. Similarly, $\mb{R}_{uf}$ is not $\mathbf{Sm}$-quasi-pseudometrizable.  Since $\mathbf{ACB}(\mb{R}_{uf})=\mathbf{CB}(\mb{R}_{uf})=\mathbf{UB}(\mb{R})$, the gts $\mb{R}_{uf}$ is $\mathbf{ACB}$-quasi-pseudometrizable by $\rho_u$. 
\end{enumerate}
\end{exam} 

\section{New topological categories}

The table of categories in \cite{AHS}, among other categories, says about the category $\mathbf{Top}$ of topological spaces, the category $\mathbf{BiTop}$ of bitopological spaces and about the category $\mathbf{Bor}$ of bornological sets. The categories $\mathbf{GTS}$, $\mathbf{GTS}_{pt}$, $\mathbf{SS}$ of small generalized topological spaces and $\mathbf{LSS}$ of locally small generalized topological spaces, as well as  $\mathbf{SS}_{pt}$ and $\mathbf{LSS}_{pt}$, were introduced in \cite{Pie1} and \cite{Pie2}. We pointed out in  \cite{PW} that, while working with categories and proper classes, a modification of $\mathbf{ZF}$ is required. We assume a suitably modified version of $\mathbf{ZF}$ suggested in \cite{PW}. 

In the light of  Proposition 9.17  and Fact 10.8, we can state the following:

\begin{f} The functor $pt$ of partial topologization preserves smallness and local smallness. More precisely:
\begin{enumerate}
\item[(i)] $pt$ restricted to $\mathbf{SS}$ maps $\mathbf{SS}$ onto $\mathbf{SS}_{pt}$;
\item[(ii)] $pt$ restricted to $\mathbf{LSS}$ maps $\mathbf{LSS}$ onto $\mathbf{LSS}_{pt}$. 
\end{enumerate}
\end{f}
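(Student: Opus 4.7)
The plan is to reduce both parts to the corresponding facts about objects, already collected in Proposition 9.17 and Fact 10.8, together with the observation that $pt$ acts as the identity on morphisms and that partially topological spaces are fixed points of $pt$ on objects.

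First I would address part (i). Let $X$ be a small gts, i.e.\ $X\in\mathbf{Sm}(X)$. By Proposition 9.17, $\mathbf{Sm}(X)=\mathbf{Sm}(X_{pt})$, so $X\in\mathbf{Sm}(X_{pt})$, which means $X_{pt}$ is a small gts. Since $X_{pt}$ is partially topological by construction, we have $X_{pt}\in\mathbf{SS}_{pt}$. Thus $pt$ sends $\mathbf{SS}$ into $\mathbf{SS}_{pt}$. For surjectivity on objects, let $Y\in\mathbf{SS}_{pt}$. Then $Y\in\mathbf{SS}$ and, since $Y$ is partially topological, $pt(Y)=Y$, which exhibits $Y$ as an image. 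On morphisms, $pt$ is the identity by Definition 9.21(iii), so the functor is well defined and genuinely lands in $\mathbf{SS}_{pt}$.

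Part (ii) is analogous, using Fact 10.8(i) in place of the $\mathbf{Sm}$-equality: if $X\in\mathbf{LSS}$, i.e.\ $X$ is locally small, then $X_{pt}$ is locally small, and since $X_{pt}$ is also partially topological we obtain $X_{pt}\in\mathbf{LSS}_{pt}$. Surjectivity follows again from $pt(Y)=Y$ for $Y\in\mathbf{LSS}_{pt}$.

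No step here is the real obstacle — the substantive content is already isolated in Proposition 9.17 and Fact 10.8(i). The only point worth being careful about is the meaning of \textquotedblleft onto\textquotedblright: since $\mathbf{SS}_{pt}\subseteq\mathbf{SS}$ and $\mathbf{LSS}_{pt}\subseteq\mathbf{LSS}$ (partially topological spaces being a subclass of all gtses), the assertion that $pt$ maps the larger class \emph{onto} the smaller one is confirmed precisely because partially topological objects are fixed by $pt$. If one wanted a stronger categorical statement, one would note in addition that $pt$ is left adjoint to the inclusion $\mathbf{GTS}_{pt}\hookrightarrow\mathbf{GTS}$ restricted to these subcategories, but for Fact 11.1 as stated the elementary argument above suffices.
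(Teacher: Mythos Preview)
Your argument is correct and matches the paper's approach exactly: the paper does not give a separate proof but simply prefaces Fact 11.1 with ``In the light of Proposition 9.17 and Fact 10.8'', which is precisely your reduction (Proposition 9.17 for smallness, Fact 10.8(i) for local smallness, together with $pt$ fixing partially topological spaces for surjectivity). One minor slip: the identity-on-morphisms clause you want is Definition 9.16(iii), not 9.21(iii).
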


All the categories $\mathbf{Top}, \mathbf{BiTop},  \mathbf{GTS}, \mathbf{GTS}_{pt}, \mathbf{SS}, \mathbf{SS}_{pt}$ and $\mathbf{Bor}$ are topological constructs (cf. \cite{AHS}, \cite{Sal}, \cite{Pie1},\cite{Pie2},  \cite{PW} and \cite{H-N}). Since $\mathbf{Top}$ and $\mathbf{Bor}$ are topological constructs, it is obvious that the category $\mathbf{Ubor}$ of bornological universes (cf. Remark 2.2.70 of \cite{Pie1}) is a topological construct, too.  Let us define several more categories and answer the question whether they are topological constructs.

\begin{defi}[cf. 1.2.1 in \cite{H-N}] Let $\mc{B}_X$ be a boundedness  in a set $X$ and let $\mc{B}_Y$ be a boundedness  in a set $Y$. We say that a mapping $f:X\to Y$ is $(\mc{B}_X, \mc{B}_Y)$\textbf{-bounded} (in abbreviation: \textbf{bounded}) if, for each $A\in\mc{B}_X$, we have $f(A)\in\mc{B}_Y$. 
\end{defi}

\begin{defi} Suppose that $((X,\tau_1^X, \tau_2^X), \mc{B}_X)$ and $((Y,\tau_1^Y, \tau_2^Y), \mc{B}_Y)$  are bornological biuniverses. We say that a mapping $f:X\to Y$ is a \textbf{bounded bicontinuous mapping} from $((X,\tau_1^X, \tau_2^X), \mc{B}_X)$ to $((Y,\tau_1^Y, \tau_2^Y), \mc{B}_Y)$ if $f$ is bicontinuous  with respect to $(\tau_1^X, \tau_2^X, \tau_1^Y, \tau_2^Y)$  and $f$ is $(\mc{B}_X, \mc{B}_Y)$-bounded. 
\end{defi}

\begin{defi} Suppose that $((X, \text{Cov}_X), \mc{B}_X)$ and $((Y, \text{Cov}_Y), \mc{B}_Y)$ are generalized bornological universes. We say that a mapping $f: X\to Y$ is a \textbf{bounded strictly continuous mapping} from $((X, \text{Cov}_X), \mc{B}_X)$ to $((Y, \text{Cov}_Y), \mc{B}_Y)$ if $f$ is both $(\mc{B}_X, \mc{B}_Y)$-bounded and $(\text{Cov}_X, \text{Cov}_Y)$-strictly continuous. 
\end{defi}

\begin{defi} A generalized bornological universe $((X, \text{Cov}_X), \mc{B})$ is called: 
\begin{enumerate}
\item[(i)] \textbf{partially topological} if the gts $(X, \text{Cov}_X)$ is partially topological;
\item[(ii)] \textbf{small} if the gts $(X, \text{Cov}_X)$ is small.
\end{enumerate}
\end{defi}

\begin{defi} We define the following categories:
\begin{enumerate}
\item[(i)] $\mathbf{BiUBor}$ where objects are bornological biuniverses and morphisms are bounded bicontinuous mappings;
\item[(ii)] $\mathbf{GeUBor}$ where objects are generalized bornological universes and morphisms are bounded strictly continuous mappings;
\item[(iii)] $\mathbf{Ge_{pt}UBor}$ where objects are partially topological generalized bornological universes and morphisms are bounded strictly continuous mappings;
\item[(iv)] $\mathbf{SmUBor}$ where objects are small generalized  bornological universes and morphisms are bounded strictly continuous mappings;
\item[(v)] $\mathbf{Sm_{pt}UBor}$ where objects are partially topological small generalized bornological universes and morphisms are bounded strictly continuous mappings.
\end{enumerate}
\end{defi} 

 \begin{prop}
 The categories defined in 11.6 are all topological constructs.
 \end{prop}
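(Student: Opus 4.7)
The plan is to treat all five categories uniformly by presenting each one as a fibred product over $\mathbf{Set}$ of two already known topological constructs: one coming from the ``topological'' side ($\mathbf{BiTop}$, $\mathbf{GTS}$, $\mathbf{GTS}_{pt}$, $\mathbf{SS}$ or $\mathbf{SS}_{pt}$, respectively) and $\mathbf{Bor}$ on the ``bornological'' side. Recall that a concrete category $(\mathcal{C},U)$ over $\mathbf{Set}$ is a topological construct iff $U$ is faithful with (essentially) small fibres and every $U$-structured source admits a unique initial lift. By the references collected just before the statement, each of the five ``topological'' base categories is a topological construct, and so is $\mathbf{Bor}$: the initial lift of a source $(f_i:X\to(Y_i,\mc{B}_i))_{i\in I}$ in $\mathbf{Bor}$ is given by $\mc{B}=\{A\subseteq X:\forall_{i\in I}\, f_i(A)\in\mc{B}_i\}$, and that this is a bornology (contains singletons, closed under subsets and finite unions) is immediate.

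The construction of initial lifts in each of the new categories is then componentwise. For a structured source $(f_i:X\to((Y_i,\tau_1^i,\tau_2^i),\mc{B}_i))_{i\in I}$ in $\mathbf{BiUBor}$, I would take the initial bitopology $(\tau_1,\tau_2)$ on $X$ supplied by $\mathbf{BiTop}$ together with $\mc{B}=\{A\subseteq X:\forall_{i\in I}\, f_i(A)\in\mc{B}_i\}$. The universal property is checked factor by factor: a mapping $g:(Z,\tau_1^Z,\tau_2^Z,\mc{B}_Z)\to(X,\tau_1,\tau_2,\mc{B})$ is a bounded bicontinuous morphism iff every $f_i\circ g$ is, which follows by combining the initial-lift property of $\mathbf{BiTop}$ with that of $\mathbf{Bor}$. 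The same recipe works verbatim for $\mathbf{GeUBor}$ (using the initial generalized topology on $X$ from $\mathbf{GTS}$) and, by passing to the corresponding base construct, for $\mathbf{Ge_{pt}UBor}$, $\mathbf{SmUBor}$ and $\mathbf{Sm_{pt}UBor}$. In each case the fibre of the forgetful functor over a set $X$ sits inside a product of set-sized fibres coming from the two base constructs, so it is itself set-sized in the modified $\mathbf{ZF}$ of \cite{PW}.

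The main point that deserves care is that for $\mathbf{Ge_{pt}UBor}$, $\mathbf{SmUBor}$ and $\mathbf{Sm_{pt}UBor}$ the topological component of the initial lift must stay inside the respective subcategory of $\mathbf{GTS}$. This is precisely the content of $\mathbf{GTS}_{pt}$, $\mathbf{SS}$ and $\mathbf{SS}_{pt}$ being topological constructs themselves, a fact established in \cite{Pie1}, \cite{Pie2} and \cite{PW}; the initial lift in the subcategory need not coincide with the one computed in $\mathbf{GTS}$, but it always exists, and combining it with the initial bornology $\mc{B}=\{A\subseteq X:\forall_{i\in I}\, f_i(A)\in\mc{B}_i\}$ gives the desired initial lift in the enriched category, the universal property being verified exactly as above. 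I expect this to be the only subtle step; everything else reduces to routine componentwise verifications, so no further obstacle arises.
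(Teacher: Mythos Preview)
Your proposal is correct and follows essentially the same approach as the paper: build the initial lift componentwise, taking the initial bornology $\mc{B}=\{A\subseteq X:\forall_{i\in I}\, f_i(A)\in\mc{B}_i\}$ on the bornological side and the appropriate initial (bi)topological or generalized-topological structure on the other side, then verify the universal property factor by factor. The only cosmetic difference is that the paper spells out the $\mathbf{Ge_{pt}UBor}$ case concretely---taking the $\mathbf{GTS}$-initial generalized topology and then applying the functor $pt$---whereas you invoke directly the fact that $\mathbf{GTS}_{pt}$ is a topological construct; these amount to the same thing, since applying $pt$ to the $\mathbf{GTS}$-initial lift is precisely how the $\mathbf{GTS}_{pt}$-initial lift is obtained.
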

 \begin{proof} To check that, for instance,  $\mathbf{Ge_{pt}UBor}$ is a topological construct, we mimic the proof to Theorem 4.4 of \cite{PW}. Namely, let us consider a source $F=\{ f_i: i\in I\}$ of mappings $f_i: X\to Y_i$ indexed by a class $I$ where every $Y_i$ is a partially topological generalized bornological universe and $Y_i=((X_i, \text{Cov}_i), \mc{B}_i)$. Let $\text{Cov}_X$ be the $\mathbf{GTS}$-initial generalized topology for $F$ in $X$ (cf. Definition 4.3 of \cite{PW}) and let 
 $\mc{B}_X=\bigcap_{i\in I}\{ A\subseteq X: f_i(A)\in\mc{B}_i\}$.  
 For $X=((X, \text{Cov}_X), \mc{B}_X)$, let $X_{pt}=(pt((X, \text{Cov}_X)), \mc{B}_X)$. The canonical morphism  $id: X_{pt}\to X$ is such that all mappings $f_i\circ id$ are morphisms in $\mathbf{Ge_{pt}UBor}$. For any object $Z$ of $\mathbf{Ge_{pt}UBor}$ and a mapping $h:Z\to X_{pt}$, we can observe that if all $f_i\circ id\circ h$ with $i\in I$ are morphisms, then $id\circ h$ is a morphism of $\mathbf{GTS}$, so $pt(h)=h$ is a morphism of $\mathbf{GTS}_{pt}$. If all $f_i\circ id\circ h$ are bounded, then $pt(h)=h$ is bounded, too. That $\mathbf{BiUBor}, \mathbf{GeUBor}, \mathbf{SmUBor}$ and $\mathbf{Sm_{pt}UBor}$ are topological can be proved by using more or less similar arguments. 
 \end{proof} 
 
Some other topological constructs relevant to bornologies or to quasi-pseudometrics were considered in \cite{CL} and \cite{Vr1}.

\end{document}